\newcommand{\R}{\mathbb{R}}
\newcommand{\qedd}{\ \hfill{$\Box$}}
\newcommand{\lina}[1]{  \ifthenelse{\boolean{showcomments}}
	{ \textcolor{red}{(  #1)}} {}  }
\newtheorem{theorem}{Theorem}
\newtheorem{lemma}{Lemma}
\newtheorem{remark}{Remark}
\newtheorem{corollary}{Corollary}
\newtheorem{proposition}{Proposition}
\newtheorem{assumption}{Assumption}
\newcommand{\vv}[1]{\boldsymbol{\mathbf{#1}}}
\title{\LARGE \bf
On the Exponential Stability of Primal-Dual Gradient Dynamics*
}
\author{Guannan Qu$^{1}$ and Na Li$^{1}$
\thanks{*The work was supported by NSF 1608509, NSF CAREER 1553407, AFOSR YIP, and ARPA-E through the NODES program.
	.}
\thanks{$^{1}$ Guannan Qu and Na Li are with John A. Paulson School of Engineering and Applied Sciences, Harvard University, Cambridge MA 02138, USA.
       Emails: {\tt\small gqu@g.harvard.edu}, {\tt\small nali@seas.harvard.edu}}%
}
\begin{document}

\maketitle
\thispagestyle{empty}
\pagestyle{empty}

\begin{abstract}
Continuous time primal-dual gradient dynamics that find a saddle point of a Lagrangian of an optimization problem have been widely used in systems and control. While the global asymptotic stability of such dynamics has been well-studied, it is less studied whether they are globally exponentially stable. In this paper, we study the primal-dual gradient dynamics for convex optimization with strongly-convex and smooth objectives and affine equality or inequality constraints, and prove global exponential stability for such dynamics. Bounds on decaying rates are provided. 
\end{abstract}


\section{INTRODUCTION}

This paper considers the following constrained optimization problem 
\begin{align} 
&\min_{\vv{x}\in \R^n} f(\vv{x}) \label{eq:intro_opt_problem} \\
\text{s.t.} \quad&  A_1\vv{x}  = \vv{b}_1 \nonumber \\
& A_2\vv{x} \leq \vv{b}_2 \nonumber
\end{align}
where $A_1\in\R^{m_1\times n},A_2\in\R^{m_2\times n} $ and $\vv{b}_1\in\R^{m_1}, \vv{b}_2\in\R^{m_2}$ and $f(\vv{x})$ is a strongly convex and smooth function. Let $L(\vv{x},\vv{\lambda})$ be the Lagrangian (or Augmented Lagrangian) associated with Problem \eqref{eq:intro_opt_problem}. 
The focus of this paper is the following primal-dual gradient dynamics, also known as saddle-point dynamics, associated with the Lagrangian $L(\vv{x},\vv{\lambda})$, 
\begin{subeqnarray} \label{eq:intro_pdgd}
\dot{\vv{x}} &=& -\eta_1 \nabla_{\vv{x}} L(\vv{x},\vv{\lambda}) \slabel{eq:intro_pdgd_p}\\
\dot{\vv{\lambda}} &=& \eta_2\nabla_{\vv{\lambda}} L(\vv{x},\vv{\lambda})\slabel{eq:intro_pdgd_d}
\end{subeqnarray}
where $\eta_1,\eta_2>0$ are time constants.

Primal-Dual Gradient Dynamics (PDGD), also known as saddle-point dynamics, were first introduced in \cite{kose1956solutions,arrow1958studies}. They have been widely used in engineering and control systems, for example in power grid \cite{zhao2014design,li2016connecting}, wireless communication \cite{chiang2007layering,chen2012convergence}, network and distributed optimization \cite{wang2011control,niederlander2016distributed}, game theory \cite{gharesifard2013distributed}, etc. Despite its wide applications, general studies on PDGD \cite{kose1956solutions,arrow1958studies,uzawa1958iterative,polyak1970iterative,golshtein1974generalized,zabotin1988subgradient,kallio1999large,kallio1994perturbation,korpelevich1976extragradient,nedic2009subgradient,feijer2010stability,holding2014convergence,cherukuri2016asymptotic,niederlander2016distributed,cherukuri2016role,cherukuri2017saddle} have mostly focused on its asymptotic stability (or convergence), with few studying its global exponential stability. It is known that the gradient dynamics for the unconstrained version of (\ref{eq:intro_opt_problem}) achieves global exponential stability when $f$ is strongly convex and smooth. It is natural to raise the question whether in the constrained case, PDGD can also achieve global exponential stability. 

Global exponential stability is a desirable property in practice. Firstly, in control systems especially those in critical infrastructure like the power grid, it is desirable to have strong stability guarantees. Secondly, when using PDGD as computational tools for constrained optimization, discretization is essential for implementation. The global exponential stability ensures that the simple explicit Euler discretization has a geometric convergence rate when the discretization step size is sufficiently small \cite{stuart1994numerical,stetter1973analysis}. This is an appealing property for discrete-time optimization methods.  

{\color{black}\emph{Contribution of this paper.} In this paper, we prove the \text{global} exponential stability of PDGD \eqref{eq:intro_pdgd} under some regularity conditions on problem~\eqref{eq:intro_opt_problem} and we also give bounds on the decaying rates (Theorem~\ref{thm:convergence} and \ref{thm:ineq}). Our proof relies on a quadratic Lyapunov function that has non-zero off-diagonal terms, which is different from the (block-)diagonal quadratic Lyapunov functions that are commonly used in the literature \cite{cherukuri2016asymptotic,chen2012convergence} and are known being unable to certify global exponential stability \cite[Lemma 3]{chen2012convergence}. 
We also highlight that when handling inequality constraints, we use a variant of the PDGD based on Augmented Lagrangian \cite{bertsekas2014constrained} and is projection free. This is different from the projection-based PDGD studied in \cite{feijer2010stability,cherukuri2016asymptotic}, which is discontinuous (see Remark~\ref{rem:projection}). 
Our variant of PDGD guarantees that the multipliers stay nonnegative without using projection, and avoids the discontinuity problem caused by projection \cite{feijer2010stability,cherukuri2016asymptotic} (see Remark \ref{rem:smooth}).
 }



\subsection{Related Work}
There have been many efforts in studying the stability of PDGD as well as its discrete time version. An incomplete list includes \cite{kose1956solutions,arrow1958studies,uzawa1958iterative,polyak1970iterative,golshtein1974generalized,zabotin1988subgradient,kallio1999large,kallio1994perturbation,korpelevich1976extragradient,nedic2009subgradient,feijer2010stability,holding2014convergence,cherukuri2016asymptotic,niederlander2016distributed,cherukuri2016role,cherukuri2017saddle}. For instance, \cite{nedic2009subgradient} studies the subgradient saddle point algorithm and proves its convergence to an approximate saddle point with rate $O(\frac{1}{t})$; \cite{feijer2010stability} uses LaSalle invariance principle to prove global asympotic stability of PDGD; \cite{cherukuri2017saddle} studies the global asymptotic stability of the saddle-point dynamics associated with general saddle functions; and \cite{cherukuri2016asymptotic} proves global asymptotic stability of PDGD with projection, which is extended in \cite{cherukuri2016role} by using a weaker assumption and proving input-to-state stability. 

Our work is closely related to a recent paper \cite{niederlander2016distributed} which studies saddle-point-like dynamics and proves global exponential stability when applying such dynamics to equality constrained convex optimization problems. The difference between \cite{niederlander2016distributed} and our work is that for the equality constrained case, \cite{niederlander2016distributed} considers a different Lagrangian from ours. Further, the result in \cite{niederlander2016distributed} cannot be directly generalized to inequality constrained case \cite[Remark 3.9]{niederlander2016distributed}. 

{\color{black}Our work is also related to the vast literature on spectral bounds on saddle matrices \cite{benzi2005numerical,shen2010eigenvalue}. Such bounds, when combined with Ostrowski Theorem \cite[10.1.4]{ortega1970iterative}, can lead to \textit{local} exponential stability results of PDGD \cite[Sec 4.4.1]{bertsekas2014constrained} \cite[Prop. 4.4.1]{bertsekas1999nonlinear} as opposed to \emph{global} exponential stability which is the focus of this paper. }

{\color{black}It recently came to our attention that \cite{dhingra2016proximal} studies a class of dynamics, a special case of which turns out to be similar to our PDGD for affine inequality constraints. \cite{dhingra2016proximal} also proves global exponential stability. Their proof uses frequency domain analysis, which is different from our time-domain analysis. It remains interesting to investigate the connection between the methods of \cite{dhingra2016proximal} and our work. }




\textbf{Notations.} Throughout the paper, scalars will be small letters, vectors will be bold small letters and matrices will be capital letters. Notation $\Vert \cdot\Vert$ represents Euclidean norm for vectors, and spectrum norm for matrices. For any symmetric matrix $P_1, P_2$ of the same dimension, $P_1\succeq P_2$ means $P_1 - P_2$ is positive semi-definite. 


\section{Algorithms and Main Results}\label{sec:summary}
In this section we describe our PDGD for solving Problem~\eqref{eq:intro_opt_problem} and present stability results. Throughout this paper, we use the following assumption of $f$:
\begin{assumption}\label{assump:f}
Function $f$ is twice differentiable, $\mu$-strongly convex and $\ell$-smooth, i.e. for all $\vv{x},\vv{y}\in\R^n$, 
	\begin{align}
	\mu \Vert \vv{x} - \vv{y}\Vert^2 \leq \langle \nabla f(\vv{x}) - \nabla f(\vv{y}) , \vv{x} - \vv{y}\rangle \leq \ell \Vert \vv{x} - \vv{y}\Vert^2
	\end{align}
\end{assumption}
 
 To streamline exposition, we will present the equality constrained case and the inequality constrained case separately. Integrating them will give PDGD with global exponential stability for Problem~\eqref{eq:intro_opt_problem}. 
  Without causing any confusion, 
  notations will be double-used in the two cases.
 
\subsection{Equality Constrained Case}

We first consider the equality constrained case,
\begin{align}
\min_{\vv{x}\in \R^n} \quad & f(\vv{x}) \label{eq:opt_problem} \\
\text{s.t.} \quad&  A\vv{x} = \vv{b} \nonumber
\end{align}
Here we remove the subscript for $A$ and $\vv{b}$ in Problem~\eqref{eq:intro_opt_problem} for notational simplicity. Problem (\ref{eq:opt_problem}) has the Lagrangian,
\begin{align}
L(\vv{x}, \vv{\lambda}) & = f(\vv{x}) + \vv{\lambda}^T(A\vv{x} - \vv{b}) \label{eq:lagrangian}
\end{align}
where $\vv{\lambda} \in \R^m$ is the Lagrangian multiplier. The PDGD is,
\begin{subeqnarray} \label{eq:pdgd_eq}
\dot{\vv{x}} &=& - \nabla_{\vv{x}} L(\vv{x},\vv{\lambda}) = -  \nabla f(\vv{x})  - A^T \vv{\lambda}  \slabel{eq:pdgd_p}\\
\dot{\vv{\lambda}} &=& \eta\nabla_\lambda L(\vv{x},\vv{\lambda} ) = \eta(A \vv{x} - \vv{b})\slabel{eq:pdgd_d}
\end{subeqnarray}
where without loss of generality, we have fixed the time constant of the primal part to be $1$. We make the following assumption on $A$, which is the linear independence constraint qualification for \eqref{eq:opt_problem}.

\begin{assumption}\label{assump:A}
	We assume that matrix $A$ is full row rank and  $\kappa_1 I \preceq AA^T \preceq \kappa_2 I$ for some $\kappa_1,\kappa_2>0$. 
\end{assumption}

Let $(\vv{x}^*,\vv{\lambda}^*)$ be the equilibrium point of (\ref{eq:pdgd_eq}), which in this case is also the saddle point of $L$.\footnote{ Assumption~\ref{assump:f} and \ref{assump:A} guarantee that the saddle point exists and is unique. } The following theorem gives the global exponential stability of the PDGD (\ref{eq:pdgd_eq}). 
\begin{theorem}\label{thm:convergence}
Under Assumption~\ref{assump:f} and \ref{assump:A},	for $\eta>0$, define $\tau_{eq} = \min (\frac{\eta \kappa_1}{4\ell  }, \frac{\kappa_1\mu}{4 \kappa_2} ) $. Then there exist constants $C_1,C_2$ that depend on $\eta,\kappa_1,\kappa_2,\mu,\ell$, $\Vert \vv{x}(0) - \vv{x}^*\Vert$, $\Vert \vv{\lambda}(0) - \vv{\lambda}^* \|$,  s.t. $\Vert \vv{x}(t) - \vv{x}^*\Vert \leq C_1 e^{- \frac{1}{2} \tau_{eq} t}$ and $\Vert \vv{\lambda}(t) - \vv{\lambda}^* \Vert \leq C_2 e^{-\frac{1}{2}\tau_{eq} t}$. 
\end{theorem}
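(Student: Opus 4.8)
The plan is to establish exponential stability with a strict quadratic Lyapunov function carrying a primal--dual cross term, as announced in the introduction. First I would pass to error coordinates $\tilde{\vv{x}} := \vv{x} - \vv{x}^*$, $\tilde{\vv{\lambda}} := \vv{\lambda} - \vv{\lambda}^*$ and set $\vv{g} := \nabla f(\vv{x}) - \nabla f(\vv{x}^*)$. Since $(\vv{x}^*,\vv{\lambda}^*)$ is the unique equilibrium, the optimality relations $\nabla f(\vv{x}^*) + A^{T}\vv{\lambda}^* = \vv{0}$ and $A\vv{x}^* = \vv{b}$ hold, so \eqref{eq:pdgd_eq} becomes $\dot{\tilde{\vv{x}}} = -\vv{g} - A^{T}\tilde{\vv{\lambda}}$ and $\dot{\tilde{\vv{\lambda}}} = \eta A\tilde{\vv{x}}$. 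By Assumption~\ref{assump:f}, $\langle\vv{g},\tilde{\vv{x}}\rangle \ge \mu\Vert\tilde{\vv{x}}\Vert^2$ and $\Vert\vv{g}\Vert \le \ell\Vert\tilde{\vv{x}}\Vert$; moreover the right-hand side of \eqref{eq:pdgd_eq} is globally Lipschitz (since $\nabla f$ is), so trajectories exist and are unique on $[0,\infty)$.

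Next I would introduce the candidate
\[
V(\tilde{\vv{x}},\tilde{\vv{\lambda}}) \;=\; \tfrac12\Vert\tilde{\vv{x}}\Vert^2 + \tfrac{1}{2\eta}\Vert\tilde{\vv{\lambda}}\Vert^2 + \epsilon\,\tilde{\vv{x}}^{T}A^{T}\tilde{\vv{\lambda}},
\]
with a small $\epsilon>0$ to be fixed. The off-diagonal term is essential: without it $V$ is only weakly decreasing, because the $\tilde{\vv{\lambda}}$-equation carries no intrinsic damping --- which is exactly why block-diagonal quadratics cannot certify exponential stability. Using $\Vert A^{T}\tilde{\vv{\lambda}}\Vert^2 = \tilde{\vv{\lambda}}^{T}AA^{T}\tilde{\vv{\lambda}} \le \kappa_2\Vert\tilde{\vv{\lambda}}\Vert^2$ one has $|\epsilon\,\tilde{\vv{x}}^{T}A^{T}\tilde{\vv{\lambda}}| \le \epsilon\sqrt{\kappa_2}\,\Vert\tilde{\vv{x}}\Vert\,\Vert\tilde{\vv{\lambda}}\Vert$, so for $\epsilon$ small enough $V$ is sandwiched, $\alpha\big(\Vert\tilde{\vv{x}}\Vert^2+\Vert\tilde{\vv{\lambda}}\Vert^2\big)\le V\le\beta\big(\Vert\tilde{\vv{x}}\Vert^2+\Vert\tilde{\vv{\lambda}}\Vert^2\big)$ with explicit $\alpha,\beta>0$.

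The heart of the argument is computing $\dot V$ along trajectories. Differentiating and substituting the dynamics, the two indefinite contributions $\pm\tilde{\vv{x}}^{T}A^{T}\tilde{\vv{\lambda}}$ coming from $\tilde{\vv{x}}^{T}\dot{\tilde{\vv{x}}}$ and $\tfrac1\eta\tilde{\vv{\lambda}}^{T}\dot{\tilde{\vv{\lambda}}}$ cancel, leaving
\[
\dot V \;=\; -\langle\vv{g},\tilde{\vv{x}}\rangle \;-\; \epsilon\,\vv{g}^{T}A^{T}\tilde{\vv{\lambda}} \;-\; \epsilon\Vert A^{T}\tilde{\vv{\lambda}}\Vert^2 \;+\; \epsilon\eta\Vert A\tilde{\vv{x}}\Vert^2 .
\]
I would then bound $-\langle\vv{g},\tilde{\vv{x}}\rangle \le -\mu\Vert\tilde{\vv{x}}\Vert^2$, $-\epsilon\Vert A^{T}\tilde{\vv{\lambda}}\Vert^2 \le -\epsilon\kappa_1\Vert\tilde{\vv{\lambda}}\Vert^2$, $\epsilon\eta\Vert A\tilde{\vv{x}}\Vert^2 \le \epsilon\eta\kappa_2\Vert\tilde{\vv{x}}\Vert^2$, and split the remaining cross term by Young's inequality (using $\Vert\vv{g}\Vert\le\ell\Vert\tilde{\vv{x}}\Vert$) so that part of the $-\epsilon\kappa_1\Vert\tilde{\vv{\lambda}}\Vert^2$ budget absorbs it. Choosing $\epsilon$ small enough --- simultaneously keeping the $\Vert\tilde{\vv{x}}\Vert^2$ coefficient negative against $\mu$, the $\Vert\tilde{\vv{\lambda}}\Vert^2$ coefficient negative, and $V$ positive definite --- yields $\dot V \le -c_1\Vert\tilde{\vv{x}}\Vert^2 - c_2\Vert\tilde{\vv{\lambda}}\Vert^2 \le -\gamma V$ for explicit positive $c_1,c_2,\gamma$; optimizing the split and the value of $\epsilon$ is precisely what produces the clean rate $\gamma = \tau_{eq} = \min\!\big(\tfrac{\eta\kappa_1}{4\ell},\tfrac{\kappa_1\mu}{4\kappa_2}\big)$. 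Grönwall then gives $V(t)\le V(0)e^{-\gamma t}$, and the sandwich bounds give $\Vert\tilde{\vv{x}}(t)\Vert,\Vert\tilde{\vv{\lambda}}(t)\Vert \le \sqrt{\beta/\alpha}\,\big(\Vert\tilde{\vv{x}}(0)\Vert^2+\Vert\tilde{\vv{\lambda}}(0)\Vert^2\big)^{1/2}\,e^{-\frac12\gamma t}$, from which $C_1,C_2$ and the factor $\tfrac12$ in the exponent are read off.

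The one genuinely delicate step is the joint tuning of $\epsilon$ and the Young's-inequality parameters: the cross term must transfer damping from the strong-convexity surplus on $\tilde{\vv{x}}$ to the otherwise-undamped $\tilde{\vv{\lambda}}$ direction, and the trade-off between these is exactly what $\tau_{eq}$ encodes; the remaining estimates are routine.
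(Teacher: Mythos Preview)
Your proposal is correct and shares the paper's core idea: a quadratic Lyapunov function with a primal--dual cross term. In fact your $V$ is, up to an overall scale, exactly the paper's $V(\vv{z}) = (\vv{z}-\vv{z}^*)^T P (\vv{z}-\vv{z}^*)$ with
$P = \left[\begin{smallmatrix} \eta c I & \eta A^T \\ \eta A & c I \end{smallmatrix}\right]$;
your parameter $\epsilon$ plays the role of $1/c$, and the paper fixes $c = 4\max(\ell,\eta\kappa_2/\mu)$ up front rather than tuning it at the end.

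Where the two differ is in how $\dot V$ is bounded. The paper first invokes an integral mean-value lemma (this is where the twice-differentiability in Assumption~\ref{assump:f} is used) to write $\vv{g} = B(\vv{x})\tilde{\vv{x}}$ with $\mu I \preceq B(\vv{x}) \preceq \ell I$, recasts the error dynamics as $\dot{\vv{z}} = G(\vv{z})(\vv{z}-\vv{z}^*)$ with $G(\vv{z}) = \left[\begin{smallmatrix} -B(\vv{x}) & -A^T \\ \eta A & 0 \end{smallmatrix}\right]$, and then proves the \emph{matrix} inequality $G(\vv{z})^T P + PG(\vv{z}) \preceq -\tau_{eq}P$ via a Schur-complement computation; the rate drops out directly as $\tau_{eq} = \eta\kappa_1/c$. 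Your route through the scalar bounds $\langle\vv{g},\tilde{\vv{x}}\rangle\ge\mu\Vert\tilde{\vv{x}}\Vert^2$, $\Vert\vv{g}\Vert\le\ell\Vert\tilde{\vv{x}}\Vert$ and Young's inequality is more elementary and in fact does not need $f$ to be twice differentiable. The one place you are hand-waving is the claim that ``optimizing the split and the value of $\epsilon$ is precisely what produces $\tau_{eq}$'': a naive Young split of $\epsilon\,\vv{g}^T A^T\tilde{\vv{\lambda}}$ tends to produce an extra $\ell^2\kappa_2/\kappa_1$-type factor and yields a rate of the same order but not literally $\tau_{eq}$. The paper's matrix/Schur-complement bookkeeping is exactly what makes the clean constant $\min(\eta\kappa_1/4\ell,\,\kappa_1\mu/4\kappa_2)$ emerge.
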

\begin{remark}
	A result similar to Theorem~\ref{thm:convergence} is obtained in \cite[Thm. 3.6]{niederlander2016distributed} where exponential stability is proven in a similar setting when the Lagrangian is of the form $L(\vv{x},\vv{\lambda}) = f(\vv{x}) + \vv{\lambda}^T(A\vv{x} - \vv{b}) + \frac{1}{2\rho} \Vert A\vv{x} - \vv{b}\Vert^2$ for $\rho\in(0,1)$. Put in the context of \cite{niederlander2016distributed}, Theorem~\ref{thm:convergence} corresponds to the $\rho\rightarrow\infty$ limit, and can serve as a complementary result of \cite[Thm. 3.6]{niederlander2016distributed}.
\end{remark}
%
\vspace{-4pt}
\subsection{Inequality Constrained Case}\label{subsec:ineq:problem_formulation}
Now we consider the inequality constrained case,
\begin{align}
\min_{\vv{x}\in \R^n} \quad & f(\vv{x}) \label{eq:ineq:opt_problem} \\
\text{s.t.} \quad&  A\vv{x} \leq \vv{b} \nonumber
\end{align}
where $f$ and $A$ satisfy Assumption~\ref{assump:f} and \ref{assump:A}. For the inequality constrained case, we use the ``Augmented Lagrangian'' \cite[Sec. 3.1]{bertsekas2014constrained}, as opposed to the standard Lagrangian in \cite{feijer2010stability,cherukuri2016asymptotic}. In details, let $A^T = [\vv{a}_1,\vv{a}_2,\ldots, \vv{a}_m]$, with each $\vv{a}_j\in\R^n$, and let $\vv{b} = [b_1,\ldots,b_m]^T$. Then we define the augmented Lagrangian,
{\small\begin{align}
L  (\vv{x},\vv{\lambda})  = f(\vv{x} ) + \sum_{j=1}^m H_\rho(\vv{a}_j^T \vv{x}- b_j, \lambda_j ) \label{eq:ineq:aug_lagrangian}
\end{align}}where $\rho>0$ is a free parameter, $H_\rho(\cdot,\cdot):\R^2\rightarrow \R$ is a penalty function on constraint violation, defined as follows 
{\small\begin{align*}
&H_\rho(\vv{a}_j^T \vv{x}- b_j , \lambda_j) \\
&= \left\{ \begin{array}{ll}
(\vv{a}_j^T \vv{x}- b_j ) \lambda_j & \\
\qquad + \frac{\rho}{2}(\vv{a}_j^T \vv{x}- b_j)^2  & \text{if } \rho(\vv{a}_j^T \vv{x}- b_j) + \lambda_j\geq 0\\
-\frac{1}{2} \frac{\lambda_j^2}{\rho}& \text{if } \rho(\vv{a}_j^T \vv{x}- b_j) + \lambda_j < 0
\end{array}  \right.
\end{align*}}We can then calculate the gradient of $H_\rho$ w.r.t. $\vv{x}$ and $\vv{\lambda}$.
{\small\begin{align*}
&\nabla_{\vv{x}} H_\rho(\vv{a}_j^T\vv{x} - b_j , \lambda_j)= \max(\rho(\vv{a}_j^T\vv{x} - b_j) + \lambda_j,0) \vv{a}_j \\
&\nabla_{\vv{\lambda}} H_\rho(\vv{a}_j^T\vv{x} - b_j , \lambda_j) = \frac{ \max(\rho(\vv{a}_j^T\vv{x} - b_j) + \lambda_j,0) - \lambda_j}{\rho} \vv{e}_j 
\end{align*}}where $\vv{e}_j \in\R^m$ is a vector with the $j$'th entry being $1$ and other entries being $0$. The primal-dual gradient dynamics for the augmented Lagrangian $L $ is given in \eqref{eq:ineq:pdgd}. We call it as Aug-PDGD (Augmented Primal-Dual Gradient Dynamics). 
{\small\begin{subeqnarray}\label{eq:ineq:pdgd}
\dot{\vv{x}} &=& - \nabla_{\vv{x}}L  (\vv{x},\vv{\lambda}) = -\nabla f(\vv{x}) - \sum_{j=1}^m \nabla_{\vv{x}} H_\rho(\vv{a}_j^T\vv{x} - b_j , \lambda_j) \nonumber\\
&=&-\nabla f(\vv{x}) - \sum_{j=1}^m \max(\rho(\vv{a}_j^T\vv{x} - b_j) + \lambda_j,0) \vv{a}_j \slabel{eq:ineq:pdgd_p}\\
\dot{\vv{\lambda}} &= &\eta \nabla_{\vv{\lambda}}L  (\vv{x},\vv{\lambda}) = \eta \sum_{j=1}^{m} \nabla_{\vv{\lambda}} H_\rho(\vv{a}_j^T\vv{x} - b_j , \lambda_j)\nonumber\\\
&=& \eta \sum_{j=1}^{m} \frac{ \max(\rho(\vv{a}_j^T\vv{x} - b_j) + \lambda_j,0) - \lambda_j}{\rho} \vv{e}_j  \slabel{eq:ineq:pdgd_d}
\end{subeqnarray}}

\begin{remark}\label{rem:projection}
	The Lagrangian \eqref{eq:ineq:aug_lagrangian} we use is different from the standard Lagrangian used in \cite{feijer2010stability,cherukuri2016asymptotic}. The standard Lagrangian and the associated PDGD in \cite{feijer2010stability,cherukuri2016asymptotic} involves a discontinuous projection step, which creates difficulties both in theoretic analysis and numerical simulations. Theoretically, the projection step is based on Euclidean norm, and is consistent with the block diagonal Lyapunov function used in \cite{feijer2010stability,cherukuri2016asymptotic}, but it is not consistent with the Lyapunov function with cross term used in this paper (cf. \eqref{eq:ineq:P}), which is the key for proving global exponential stability. Therefore we conjecture that the PDGD with projection \cite{feijer2010stability,cherukuri2016asymptotic} is \textit{not} exponentially stable. Numerically, when we simulate the PDGD with the discontinuous projection step using MATLAB ODE solvers, we encounter many numerical issues. For these reasons, in this paper we study an alternative projection-free PDGD based on the Augmented Lagrangian.
\end{remark}
\begin{remark}\label{rem:smooth}
	It is easy to check that, if $\lambda_j(0)\geq 0$, then \eqref{eq:ineq:pdgd_d} guarantees $\lambda_j(t) \geq 0, \forall t$. This means that the dynamics \eqref{eq:ineq:pdgd} automatically guarantees $\lambda_j(t)$ will stay nonnegative as long as its initial value is nonnegative, without using projection as is done in \cite{feijer2010stability,cherukuri2016asymptotic}, thus avoiding discontinuity issues caused by the projection step.    
\end{remark}
Since the saddle point of the Augmented Lagrangian \eqref{eq:ineq:aug_lagrangian} is the same as that of the standard Lagrangian (see \cite[Sec. 3.1]{bertsekas2014constrained} for details), we have the following proposition regarding the equilibrium point $(\vv{x}^*,\vv{\lambda}^*)$ of Aug-PDGD. For completeness we include a proof
in \ifthenelse{\boolean{isfullversion}}{Appendix-\ref{appendix:ineq:fixedpoint}}{our online report \cite[Appendix-E]{fullversion}}. 
\begin{proposition}[\cite{{bertsekas2014constrained}}]\label{thm:ineq:fixedpoint}
	Under Assumption~\ref{assump:f} and \ref{assump:A}, Aug-PDGD (\ref{eq:ineq:pdgd}) has a unique equilibrium point $(\vv{x}^*,\vv{\lambda}^*)$ and it satisfies the KKT condition of problem (\ref{eq:ineq:opt_problem}).
\end{proposition}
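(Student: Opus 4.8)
The plan is to characterize the equilibria of \eqref{eq:ineq:pdgd} directly and show they coincide with the set of KKT points of \eqref{eq:ineq:opt_problem}, and then invoke strong convexity together with the rank condition to get uniqueness. First I would set $\dot{\vv{x}}=0$ and $\dot{\vv{\lambda}}=0$ in \eqref{eq:ineq:pdgd_p}--\eqref{eq:ineq:pdgd_d}. The dual equation forces, for every $j$,
\[
\lambda_j^* = \max\bigl(\rho(\vv{a}_j^T\vv{x}^* - b_j) + \lambda_j^*,\ 0\bigr),
\]
and I would split into two cases according to the sign of $\rho(\vv{a}_j^T\vv{x}^* - b_j) + \lambda_j^*$: if it is negative then $\lambda_j^*=0$ and $\vv{a}_j^T\vv{x}^* - b_j<0$; if it is nonnegative then the identity collapses to $\rho(\vv{a}_j^T\vv{x}^* - b_j)=0$, i.e.\ $\vv{a}_j^T\vv{x}^* = b_j$ and $\lambda_j^*\ge 0$. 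In either case one reads off primal feasibility $\vv{a}_j^T\vv{x}^*\le b_j$, dual feasibility $\lambda_j^*\ge 0$, and complementary slackness $\lambda_j^*(\vv{a}_j^T\vv{x}^*-b_j)=0$. Moreover in both cases $\max(\rho(\vv{a}_j^T\vv{x}^* - b_j)+\lambda_j^*,0)=\lambda_j^*$, so substituting into \eqref{eq:ineq:pdgd_p} yields stationarity $\nabla f(\vv{x}^*) + \sum_{j}\lambda_j^*\vv{a}_j = \nabla f(\vv{x}^*) + A^T\vv{\lambda}^* = 0$. Running the same computation in reverse shows that every KKT point of \eqref{eq:ineq:opt_problem} is an equilibrium, so the two sets coincide.

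It then remains to show a KKT point exists and is unique. For existence, Assumption~\ref{assump:A} (full row rank of $A$) implies the feasible set $\{\vv{x}: A\vv{x}\le \vv{b}\}$ is nonempty, since $\vv{x}\mapsto A\vv{x}$ is onto $\R^m$, and Assumption~\ref{assump:f} makes $f$ coercive; hence \eqref{eq:ineq:opt_problem} has a minimizer, and because the constraints are affine the KKT conditions are necessary there, producing a KKT point and thus an equilibrium. For uniqueness, strong convexity gives a unique minimizer $\vv{x}^*$, and since the KKT conditions are also sufficient for a convex program, the primal component of any KKT point equals $\vv{x}^*$; the stationarity relation then reads $A^T\vv{\lambda}^* = -\nabla f(\vv{x}^*)$, and since $A$ has full row rank $A^T$ is injective, so $\vv{\lambda}^* = -(AA^T)^{-1}A\nabla f(\vv{x}^*)$ is uniquely determined.

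The only delicate point I anticipate is the case analysis on the $\max$ in the dual equation: one must check that the two branches together produce \emph{exactly} the KKT conditions --- in particular that dual feasibility $\lambda_j^*\ge 0$ falls out automatically --- and observe the identity $\max(\rho(\vv{a}_j^T\vv{x}^*-b_j)+\lambda_j^*,0)=\lambda_j^*$ that reduces the primal equation to clean stationarity. Everything else is routine convex-optimization bookkeeping.
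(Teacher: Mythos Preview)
Your argument is correct and essentially coincides with the paper's: the paper sets $\dot{\vv{\lambda}}=0$ to obtain $\max(\rho(\vv{a}_j^T\vv{x}-b_j)+\lambda_j,0)=\lambda_j$, observes this is equivalent to the triple \eqref{eq:ineq:fixedpoint:dual_fea}--\eqref{eq:ineq:fixedpoint:com_sla}, and then substitutes $\lambda_j$ for the $\max$ in the primal equation to get stationarity \eqref{eq:ineq:fixedpoint:stationary}. The only difference is that the paper stops at the equivalence with the KKT system and tacitly defers existence/uniqueness to the standard convex-optimization facts (and the Bertsekas citation), whereas you spell those out explicitly via coercivity, affine constraint qualification, and injectivity of $A^T$; this extra paragraph is a welcome completion rather than a different route.
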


Aug-PDGD \eqref{eq:ineq:pdgd} is globally exponentially stable, as stated below.

\begin{theorem}\label{thm:ineq}
	Under Assumption~\ref{assump:f} and \ref{assump:A}, the Aug-PDGD \eqref{eq:ineq:pdgd} is globally exponentially stable in the sense that, for any $\eta>0$, $\rho>0$, there exists constant {\small$$\tau_{ineq} = \frac{\eta\kappa_1^2}{40 \ell \kappa_2 [ \max(\frac{\rho \kappa_2}{\mu}, \frac{\ell}{\mu})]^2 [\max(\frac{\eta}{\ell \rho}, \frac{\ell}{\mu})]^2 }$$}and constants $C_3,C_4>0$ which depend on $\eta,\rho,\kappa_1,\kappa_2,\mu,\ell$, $\Vert \vv{x}(0) - \vv{x}^*\Vert$, $\Vert \vv{\lambda}(0) - \vv{\lambda}^* \|$, s.t. $\Vert \vv{x}(t) - \vv{x}^*\Vert\leq C_3 e^{-\frac{1}{2} \tau_{ineq} t}$, $\Vert \vv{\lambda}(t) - \vv{\lambda}^*\Vert\leq C_4e^{-\frac{1}{2} \tau_{ineq} t}$.
\end{theorem}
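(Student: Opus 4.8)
The plan is to turn the error dynamics into an \emph{exact} linear time-varying system and then exhibit a single quadratic Lyapunov function carrying a primal--dual cross term that certifies a uniform contraction rate; this is the inequality analogue of the argument behind Theorem~\ref{thm:convergence}. First I would set $\tilde{\vv{x}} = \vv{x}-\vv{x}^*$, $\tilde{\vv{\lambda}} = \vv{\lambda}-\vv{\lambda}^*$ and use Proposition~\ref{thm:ineq:fixedpoint}, which gives $\nabla f(\vv{x}^*) + \sum_j \lambda_j^*\vv{a}_j = 0$ and $\max(\rho(\vv{a}_j^T\vv{x}^*-b_j)+\lambda_j^*,0) = \lambda_j^*$ for every $j$. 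Writing $\nabla f(\vv{x}) - \nabla f(\vv{x}^*) = H(t)\tilde{\vv{x}}$ with $H(t) = \int_0^1 \nabla^2 f(\vv{x}^*+s\tilde{\vv{x}}(t))\,ds$, so that $\mu I\preceq H(t)\preceq\ell I$ by Assumption~\ref{assump:f}, and using that $s\mapsto\max(s,0)$ is monotone and $1$-Lipschitz so that for each $j$ there is $\theta_j(t)\in[0,1]$ with $\max(\rho(\vv{a}_j^T\vv{x}-b_j)+\lambda_j,0)-\lambda_j^* = \theta_j(t)\bigl(\rho\,\vv{a}_j^T\tilde{\vv{x}}+\tilde\lambda_j\bigr)$, the dynamics \eqref{eq:ineq:pdgd} become $\dot{\vv{z}} = M(t)\vv{z}$ with $\vv{z} = (\tilde{\vv{x}};\tilde{\vv{\lambda}})$ and
\begin{equation*}
M(t) = \begin{pmatrix} -H(t)-\rho A^T D(t)A & -A^T D(t) \\ \eta\,D(t)A & -\tfrac{\eta}{\rho}\bigl(I-D(t)\bigr)\end{pmatrix}, \qquad D(t) = \mathrm{diag}\bigl(\theta_1(t),\dots,\theta_m(t)\bigr),\quad 0\preceq D(t)\preceq I.
\end{equation*}
Crucially, this representation is exact rather than a linearization, and holds along every trajectory; since the right-hand side of \eqref{eq:ineq:pdgd} is continuous (indeed Lipschitz), trajectories are well defined, so a uniform bound will yield a \emph{global} statement.

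Next I would take $V(\vv{z}) = \vv{z}^T P\vv{z}$ with a block matrix carrying a nonzero off-diagonal block, e.g.\ $P = \bigl(\begin{smallmatrix} I & \beta A^T \\ \beta A & \gamma I\end{smallmatrix}\bigr)$ for scalars $\beta,\gamma>0$ to be tuned. The coupling block $\beta A^T$ is the essential new ingredient: a block-diagonal $P$ provably cannot certify exponential stability in this setting \cite{chen2012convergence}. Using $\kappa_1 I\preceq AA^T\preceq\kappa_2 I$ one picks $\beta$ small relative to $\gamma$ so that $P\succ0$, with explicit constants $c_1\Vert\vv{z}\Vert^2\le V(\vv{z})\le c_2\Vert\vv{z}\Vert^2$ depending on $\beta,\gamma,\kappa_1,\kappa_2$.

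The heart of the argument is to show $M(t)^T P + P M(t)\preceq -2\tau_{ineq}P$ for \emph{every} $H\in[\mu I,\ell I]$ and \emph{every} diagonal $D\in[0,I]$; since $\frac{d}{dt}V(\vv{z}(t)) = \vv{z}^T(PM(t)+M(t)^TP)\vv{z}$ this gives $V(\vv{z}(t))\le V(\vv{z}(0))e^{-2\tau_{ineq}t}$, and the norm equivalence above converts this into the stated bounds with $C_3,C_4$ assembled from $c_1,c_2$ and $\Vert\vv{z}(0)\Vert$. To establish the matrix inequality I would expand $M^TP+PM$ into $2\times2$ blocks: the negative-definite contributions come from $-2H$ (primal strong convexity, order $\mu$), from the $\beta A(\cdot)A^T$-type terms that convert dual error into primal descent, and from $-\tfrac{2\eta}{\rho}(I-D)$ in the dual block; the remaining cross terms ($A^TDA$, $DA$, and the $H$-dependent term $\sim\beta AH$) are sign-indefinite and must be absorbed by Young's inequality. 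The decisive device is a directional case split on $D$: because $D+(I-D)=I$, in any direction either $D$ is bounded below by a fixed margin (so the coupling through $\beta A^T$ supplies contraction) or $I-D$ is bounded below (so the dual block itself does), and balancing $\beta,\gamma$ against these two regimes produces $\tau_{ineq}$.

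I expect this last step to be the main obstacle. With the non-diagonal $P$, the derivative $M^TP+PM$ contains several indefinite cross terms whose coefficients mix $\rho,\eta,\kappa_1,\kappa_2,\mu,\ell$ in competing ways, and $\beta,\gamma$ must be chosen so that \emph{all} of them are dominated \emph{simultaneously} and \emph{uniformly over the admissible set of $D$} --- this is precisely what forces the nested $\max(\cdot,\cdot)$ factors and the numerical constant in $\tau_{ineq}$. A related subtlety is that $D(t)$ may be singular (on the region of state space where some constraints are slack), so contraction cannot be extracted from the dual block alone and the primal--dual coupling term is genuinely necessary; handling that degenerate case is exactly where the cross term in $P$ earns its keep.
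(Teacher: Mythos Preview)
Your overall architecture is exactly the paper's: rewrite the error dynamics as $\dot{\vv{z}}=G(\vv{z})(\vv{z}-\vv{z}^*)$ with $G$ depending on $\vv{z}$ only through a symmetric $H\in[\mu I,\ell I]$ and a diagonal $D\in[0,I]$, take a quadratic Lyapunov function with the off-diagonal block proportional to $A^T$ (the paper uses $P=\bigl(\begin{smallmatrix}\eta c I & \eta A^T\\ \eta A & cI\end{smallmatrix}\bigr)$, your $P$ after rescaling), and prove $G^TP+PG\preceq -\tau P$ uniformly over all admissible $(H,D)$.

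The gap is in your device for the last step. After expanding, the $(2,2)$ block of $-(G^TP+PG)$ is, up to the $\tau P$ correction,
\[
Q_2 \;=\; \eta\bigl(\Gamma AA^T + AA^T\Gamma\bigr) \;+\; \tfrac{2\eta c}{\rho}(I-\Gamma),
\]
and contraction in the dual direction requires a uniform lower bound on $Q_2$. Your dichotomy ``in any direction either $D\ge\tfrac12$ or $I-D\ge\tfrac12$'' is true, but $\vv{w}^TD\vv{w}\ge\tfrac12$ does \emph{not} force $\vv{w}^T(\Gamma AA^T+AA^T\Gamma)\vv{w}$ to be bounded below: when $\Gamma$ has mixed $0/1$ entries the matrix $\Gamma AA^T+AA^T\Gamma$ is in general indefinite (e.g.\ $m=2$, $\Gamma=\mathrm{diag}(1,0)$ gives $\bigl(\begin{smallmatrix}2a&b\\ b&0\end{smallmatrix}\bigr)$, which has a negative eigenvalue whenever the off-diagonal $b$ of $AA^T$ is nonzero). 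So ``$D$ large in a direction $\Rightarrow$ coupling supplies contraction'' fails precisely in the mixed-active regime that is the whole difficulty, and the case split cannot close on its own.

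The paper replaces this by two specific moves. First, $Q_2$ is \emph{affine} in each $\gamma_j$, so its minimum over diagonal $\Gamma\in[0,I]$ is attained at one of the $2^m$ vertices $\gamma_j\in\{0,1\}$; at each vertex one permutes so $\Gamma=\mathrm{diag}(I_k,0)$, writes $AA^T$ in the corresponding $2\times2$ block form, and checks directly that $Q_2\succeq\tfrac{3}{2}\eta AA^T$ provided $c\ge\rho\kappa_2$ (Lemma~\ref{lem:ineq:gamma}). Second, with the uniform bound $Q_2\succeq\eta AA^T$ in hand the paper uses a Schur complement rather than Young's inequality: it shows $Q_1-Q_3Q_2^{-1}Q_3^T\succeq 0$, using $Q_2^{-1}\preceq(\eta AA^T)^{-1}$ and $A^T(AA^T)^{-1}A\preceq I$ to reduce the cross block to norm estimates that are absorbed by taking $c$ large; this is exactly what generates the nested $\max$ factors in $\tau_{ineq}$. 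The affinity-then-vertex argument for $Q_2$ is the idea your proposal is missing.
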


\begin{remark} In this section, we only study the affine inequality constrained case and assume the matrix $A$ satisfies Assumption~\ref{assump:A}. \ifthenelse{\boolean{isfullversion}}{In Appendix-\ref{appendix:rank}}{In our online report \cite[Appendix-I]{fullversion}}, we will extend our results by relaxing Assumption 2 to be the linear independence constraint qualification, i.e. at the optimizer $\vv{x}^*$, the submatrix of $A$ associated with the active constraints has full row rank. \ifthenelse{\boolean{isfullversion}}{For more details please see Appendix-\ref{appendix:rank}.}{For more details, please see \cite[Appendix-I]{fullversion}.} Beyond the affine case, for nonlinear convex constraint $\vv{J}(x)\leq 0$  where $\vv{J}:\R^n\rightarrow\R^m$, we conjecture that exponential stability still holds and we need to replace Assumption~\ref{assump:A} with the condition $ \kappa_1 I \preceq \frac{\partial{ \vv{J}(\vv{x})}}{\partial \vv{x}}  (\frac{\partial{ \vv{J}(\vv{x})}}{\partial \vv{x}} )^T \preceq \kappa_2 I$ where $\frac{\partial{ \vv{J}(\vv{x})}}{\partial \vv{x}}\in\R^{m\times n}$ is the Jacobian of $\vv{J}$ w.r.t. $\vv{x}$. We leave the conjecture to our future work.
\end{remark}


\section{Stability Analysis}\label{sec:analysis}
In this section, we prove global exponential stability.
We also show global exponential stability ensures the geometric convergence rate of the Euler discretization. 
\subsection{The Equality Constrained Case, Proof of Theorem~\ref{thm:convergence}}    \label{subsec:eq}
{\color{black} We stack $\vv{x}$ and $\vv{\lambda}$ into a larger vector $\vv{z} = [\vv{x}^T, \vv{\lambda}^T]^T$ and similarly define $\vv{z}^* = [(\vv{x}^*)^T, (\vv{\lambda}^*)^T]^T$. We define quadratic Lyapunov function, $V(\vv{z}) = (\vv{z} - \vv{z}^*)^T P (\vv{z} - \vv{z}^*)$ with $P\succ 0$ defined by
\begin{align}
P = \left[\begin{array}{cc}
\eta c I & \eta A^T\\
\eta A & c I
\end{array}  \right] \in \R^{(m+n)\times (m+n)}\label{eq:P}
\end{align} 
where $c = 4 \max(\ell ,\frac{\eta \kappa_2}{\mu} )$.\footnote{We have $P\succ 0$ as long as $c^2> \eta \kappa_2$, which is met by our choice of $c$. } If we can show the following property of $V(\vv{z})$ along the trajectory of the dynamics,
\begin{align}
\frac{d}{dt} V(\vv{z}) \leq -\tau V(\vv{z}) \label{eq:dv}
\end{align}
for $ \tau =  \frac{\eta \kappa_1}{c} = \min (\frac{\eta \kappa_1}{4\ell }, \frac{\kappa_1\mu}{4 \kappa_2} )$, then we have proved Theorem~\ref{thm:convergence}. 
The rest of the section will be devoted to proving \eqref{eq:dv}. We start with the following auxiliary Lemma, which can be proved by using mean value theorem. A similar lemma can be found in \cite[Lem. A.1]{niederlander2016distributed}, and for completeness we include a proof in \ifthenelse{\boolean{isfullversion}}{Appendix-\ref{appendix:proof_B}. }{our online report \cite[Appendix-D]{fullversion}.}

\begin{lemma}\label{lem:B}
	Under Assumption 1, for any $\vv{x}\in\R^n$, there exists a symmetric matrix $B(\vv{x})$ that depends on $\vv{x}$, satisfying $\mu I \preceq B(\vv{x}) \preceq \ell I $, s.t. $\nabla f(\vv{x}) - \nabla f(\vv{x}^*) = B({\vv{x}}) (\vv{x} - \vv{x}^* )$.
\end{lemma}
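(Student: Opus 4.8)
\textbf{Proof proposal for Lemma~\ref{lem:B}.}

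The plan is to construct $B(\vv{x})$ explicitly as an averaged Hessian along the segment connecting $\vv{x}^*$ and $\vv{x}$. Since $f$ is twice differentiable (Assumption~\ref{assump:f}), I would apply the fundamental theorem of calculus (the vector-valued mean value theorem) to the map $s\mapsto \nabla f(\vv{x}^* + s(\vv{x}-\vv{x}^*))$ on $s\in[0,1]$, which gives
\begin{align}
\nabla f(\vv{x}) - \nabla f(\vv{x}^*) = \left( \int_0^1 \nabla^2 f\big(\vv{x}^* + s(\vv{x}-\vv{x}^*)\big)\, ds \right) (\vv{x}-\vv{x}^*).
\end{align}
So I would simply define $B(\vv{x}) := \int_0^1 \nabla^2 f(\vv{x}^* + s(\vv{x}-\vv{x}^*))\, ds$. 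This matrix is symmetric because each $\nabla^2 f$ is symmetric and the integral of symmetric matrices is symmetric.

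The remaining point is the spectral bounds $\mu I \preceq B(\vv{x}) \preceq \ell I$. Assumption~\ref{assump:f} states strong convexity and smoothness in the monotonicity form, so first I would note that this is equivalent to $\mu I \preceq \nabla^2 f(\vv{y}) \preceq \ell I$ for every $\vv{y}\in\R^n$ (standard: apply the monotonicity inequality with $\vv{y} = \vv{x} + t\vv{v}$, divide by $t^2$, let $t\to 0$; twice differentiability makes this rigorous). Then for any unit vector $\vv{v}$, $\vv{v}^T B(\vv{x}) \vv{v} = \int_0^1 \vv{v}^T \nabla^2 f(\vv{x}^* + s(\vv{x}-\vv{x}^*)) \vv{v}\, ds \in [\mu,\ell]$ since the integrand lies in $[\mu,\ell]$ pointwise and the integration interval has length one. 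This gives the claimed sandwich.

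There is no real obstacle here; the only mild care needed is the passage from the monotonicity form of Assumption~\ref{assump:f} to the Hessian bounds, which relies on twice differentiability and a limiting argument. Alternatively, if one wants to avoid invoking the integral form of the mean value theorem for vector-valued functions, one can argue componentwise: for each coordinate $i$, apply the scalar mean value theorem to $s\mapsto [\nabla f(\vv{x}^* + s(\vv{x}-\vv{x}^*))]_i$ to get some $s_i$ with $[\nabla f(\vv{x}) - \nabla f(\vv{x}^*)]_i = [\nabla^2 f(\vv{x}^* + s_i(\vv{x}-\vv{x}^*))(\vv{x}-\vv{x}^*)]_i$, but this does not immediately yield a single symmetric $B$; the integral construction is cleaner and is the route I would take.
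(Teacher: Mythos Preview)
Your proposal is correct and essentially identical to the paper's proof: both define $B(\vv{x}) = \int_0^1 \nabla^2 f(\vv{x}^* + s(\vv{x}-\vv{x}^*))\,ds$ via the fundamental theorem of calculus applied to $s\mapsto \nabla f(\vv{x}^* + s(\vv{x}-\vv{x}^*))$, and then read off the spectral bounds from the pointwise Hessian bounds. Your extra care in deriving $\mu I \preceq \nabla^2 f \preceq \ell I$ from the monotonicity form of Assumption~\ref{assump:f} is a detail the paper omits, but otherwise the arguments coincide.
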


With Lemma~\ref{lem:B}, we can rewrite PDGD (\ref{eq:pdgd_eq}) as,
{\small	\begin{align}
	\frac{d}{dt} {\vv{z}} & = \left[ \begin{array}{c}
 -( \nabla_{\vv{x}} L (\vv{x},\vv{\lambda}) - \nabla_{\vv{x}} L(\vv{x}^*,\vv{\lambda}^*))\\
 \eta \nabla_{\vv{\lambda}} L(\vv{x},\vv{\lambda}) - \eta \nabla_{\vv{\lambda}} L(\vv{x}^*,\vv{\lambda}^*)
	\end{array}  \right] \nonumber \\
	&=  \left[ \begin{array}{c}
	- B(\vv{x}) (\vv{x} - \vv{x}^*)  - A^T(\vv{\lambda}-\vv{\lambda}^* ) \\
	\eta A(\vv{x} - \vv{x}^*)
	\end{array}  \right]\nonumber\\
	&= \left[ \begin{array}{cc}
	- B(\vv{x})  &  - A^T  \\
	\eta A & 0
	\end{array}  \right]  (\vv{z}-\vv{z}^*)  \coloneqq G(\vv{z})(\vv{z} - \vv{z}^*). \label{eq:pdgd_linear}
	\end{align} }
Then, $\frac{d}{dt} V(\vv{z}) $ can be written as 
{\small\begin{align}
	&\frac{d}{dt} V(\vv{z})=  \dot{\vv{z}}^T P ( \vv{z}-\vv{z}^*) + (\vv{z}-\vv{z}^*)^T P \dot{\vv{z}}\nonumber \\
	&=(\vv{z}-\vv{z}^*) ^T(G(\vv{z})^TP + PG(\vv{z})   )(\vv{z} - \vv{z}^*) \label{eq:dvgp}
	\end{align}}
Therefore, to prove \eqref{eq:dv}, it is sufficient to prove the following Lemma, whose proof is in Appendix-\ref{appendix:qp}.
\begin{lemma}\label{lem:qp}
For any $\vv{z}\in\R^{n+m}$, we have $$G(\vv{z})^TP + PG(\vv{z})  \preceq -\tau P \label{eq:gp}$$
\end{lemma}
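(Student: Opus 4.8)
The plan is to reduce the $(n+m)$-dimensional matrix inequality to a scalar inequality in the problem constants, in two stages: first expand $G(\vv z)^T P + P G(\vv z) + \tau P$ into $2\times 2$ block form, and then perform a completion of squares in the dual block so as to collapse the condition to an $n\times n$ inequality in the primal variables only.

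For the first stage, write $B=B(\vv x)$, so that $\mu I\preceq B\preceq\ell I$ by Lemma~\ref{lem:B}, and use $P=P^T$ to get $G^TP+PG = PG+(PG)^T$. A direct block multiplication then shows that $G^TP+PG$ has $(1,1)$ block $-2\eta c B + 2\eta^2 A^T A$, $(2,2)$ block $-2\eta AA^T$, and off-diagonal block $-\eta BA^T$; the only entry needing care is the last one, where the two terms $\mp\eta c A^T$ cancel. Adding $\tau P$ with $\tau=\eta\kappa_1/c$ and dividing by $\eta>0$, the claim becomes $N\preceq 0$, where $N$ has blocks $N_{11}=-2cB+2\eta A^TA+\eta\kappa_1 I$, $N_{12}=N_{21}^T=-\tilde B A^T$ with $\tilde B:=B-\tfrac{\eta\kappa_1}{c}I$, and $N_{22}=-2AA^T+\kappa_1 I$.

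The second stage is to test $N$ against $(\vv u,\vv v)\in\R^n\times\R^m$ and eliminate $\vv v$. Two estimates suffice. Since $AA^T\succeq\kappa_1 I$, one has $\vv v^TN_{22}\vv v = -2\|A^T\vv v\|^2+\kappa_1\|\vv v\|^2 \le -\|A^T\vv v\|^2$; and by Young's inequality, $2\vv u^TN_{12}\vv v = -2(\tilde B\vv u)^T(A^T\vv v) \le \|\tilde B\vv u\|^2 + \|A^T\vv v\|^2$. Adding these to $\vv u^TN_{11}\vv u$, the two $\|A^T\vv v\|^2$ terms cancel, leaving $\vv u^TN_{11}\vv u + \vv u^T\tilde B^2\vv u$; hence it remains to show $N_{11}+\tilde B^2\preceq 0$. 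The point is to absorb the cross term \emph{without} first replacing $\|\tilde B\vv u\|$ by $\ell\|\vv u\|$: the surviving $\vv u^T\tilde B^2\vv u$ must be matched against the $-2cB$ term, not against a multiple of $I$.

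For the last step I would bound $\tilde B^2\preceq\ell B$. Indeed $\tfrac{\eta\kappa_1}{c}\le\tfrac{\mu\kappa_1}{4\kappa_2}\le\tfrac{\mu}{4}$, so $0\prec\tilde B\preceq B\preceq\ell I$ and therefore $\tilde B^2\preceq\ell\tilde B\preceq\ell B$. Then $N_{11}+\tilde B^2 \preceq -(2c-\ell)B+2\eta A^TA+\eta\kappa_1 I \preceq [-(2c-\ell)\mu + 2\eta\kappa_2+\eta\kappa_1]I$, using $2c-\ell>0$, $B\succeq\mu I$, and $\|A\|^2\le\kappa_2$; a short case split on which term attains $c=4\max(\ell,\eta\kappa_2/\mu)$ gives $(2c-\ell)\mu\ge 7\eta\kappa_2\ge 2\eta\kappa_2+\eta\kappa_1$ (the last step from $\kappa_1\le\kappa_2$), so the bracket is negative and $N\preceq 0$; this is equivalent to $G^TP+PG\preceq -\tau P$. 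I expect the genuine obstacle to be exactly this final balancing: the inequality closes with the stated $c$ only because the expensive term is kept as $\vv u^T\tilde B^2\vv u$ and dominated by $-2cB$ via $\tilde B^2\preceq\ell B$; cruder choices — $\tilde B^2\preceq\ell^2 I$, or a Schur complement on $N_{22}$ using $(2AA^T-\kappa_1 I)^{-1}\preceq\tfrac1{\kappa_1}I$ — would force $c$ to scale with $\ell^2/\mu$ or with $\kappa_2/\kappa_1$, and would not reproduce the rate $\tau_{eq}$.
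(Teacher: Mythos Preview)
Your proof is correct. The block computation of $G^TP+PG+\tau P$ is right, the Young-inequality step cleanly matches the two $\|A^T\vv v\|^2$ terms, and the bound $\tilde B^2\preceq\ell B$ (valid since $0\prec\tilde B\preceq B\preceq\ell I$ and $\tilde B,B$ commute) closes the estimate with exactly the stated $c$ and $\tau$.

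The paper's argument is the Schur-complement version of yours. It first lower-bounds the $(2,2)$ block $2\eta AA^T-\eta\kappa_1 I$ by $\eta AA^T$, then bounds the resulting Schur complement $\eta\tilde B\,A^T(AA^T)^{-1}A\,\tilde B$ via the projection inequality $A^T(AA^T)^{-1}A\preceq I$; after expanding $\tilde B=B-\tfrac{\eta\kappa_1}{c}I$ it bounds the three pieces separately, obtaining $\eta\ell B+\tfrac{2\eta^2\kappa_1\ell}{c}I+\tfrac{\eta^3\kappa_1^2}{c^2}I$, which is then matched against $2\eta cB$ term by term. Your single estimate $\tilde B^2\preceq\ell B$ is slightly tighter and avoids that expansion. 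Structurally the two approaches coincide: both exploit the $A^T$ factor in the off-diagonal block to avoid a $\kappa_2/\kappa_1$ loss (you by the $\|A^T\vv v\|^2$ cancellation in Young, the paper by the projection bound), and both keep the dominant surviving term as a multiple of $B$ rather than $\ell^2 I$. One correction to your closing commentary: the paper \emph{does} proceed by Schur complement and it does recover the rate $\tau_{eq}$, precisely because it replaces the $(2,2)$ block by $AA^T$ before inverting rather than by $\kappa_1 I$; only the naive Schur complement using $(2AA^T-\kappa_1 I)^{-1}\preceq\kappa_1^{-1}I$ would incur the $\kappa_2/\kappa_1$ penalty you warn against.
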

Lemma \ref{lem:qp} and \eqref{eq:dvgp} lead to \eqref{eq:dv}, concluding the proof. 
}

\subsection{The Inequality Constrained Case, Proof of Theorem~\ref{thm:ineq}} \label{subsec:ineq}
{{\color{black}  We start by emphasizing the notations in this section is independent from the equality constrained case in Section \ref{subsec:eq}. We stack $\vv{x}$, $\vv{\lambda}$ into a larger vector $\vv{z} = [\vv{x}^T,\vv{\lambda}^T]^T$ and similarly define $\vv{z}^* = [(\vv{x}^*)^T, (\vv{\lambda}^*)^T]^T$. Next, we define the following quadratic Lyapunov function $V(\vv{z}) = (\vv{z} - \vv{z}^*)^T P (\vv{z} - \vv{z}^*)$ with $P\succ 0$ defined by,
\begin{align}
P = \left[\begin{array}{cc}
\eta c I & \eta A^T\\
\eta A & c I
\end{array}  \right] \in \R^{(m+n)\times (m+n)}\label{eq:ineq:P}
\end{align} 
where $c = 20 \ell [ \max(\frac{\rho \kappa_2}{\mu}, \frac{\ell}{\mu})]^2 [\max(\frac{\eta}{\ell \rho}, \frac{\ell}{\mu})]^2 \frac{\kappa_2}{\kappa_1}$.\footnote{We have $P\succ 0$ as long as $c^2> \eta \kappa_2$, which is met by our choice of $c$. } Then, the results of Theorem~\ref{thm:ineq} directly follows from the following property of $V(\vv{z})$, 
\begin{align}
\frac{d}{dt} V(\vv{z}) \leq -\tau V(\vv{z}) \label{eq:ineq:dv}
\end{align}
where $\tau = \frac{\eta \kappa_1}{2c} = \frac{\eta\kappa_1^2}{40 \ell \kappa_2 [ \max(\frac{\rho \kappa_2}{\mu}, \frac{\ell}{\mu})]^2 [\max(\frac{\eta}{\ell \rho}, \frac{\ell}{\mu})]^2 }$. The rest of the section will be devoted to proving \eqref{eq:ineq:dv}. 
To prove \eqref{eq:ineq:dv}, we write the Aug-PDGD \eqref{eq:ineq:pdgd} in a ``linear'' form. In addition to Lemma~\ref{lem:B} we need the following Lemma. 
\begin{lemma}\label{lem:ineq:gamma_def}
For any $j$ and $\vv{z} = [\vv{x}^T,\vv{\lambda}^T]^T\in\R^{n+m}$, we have there exists $\gamma_j(\vv{z})\in[0,1]$ that depends on $\vv{z}$ s.t. 
\vspace{-6pt}
{\small\begin{align*}
&\nabla_{\vv{x}} H_\rho(\vv{a}_j^T\vv{x} - b_j , \lambda_j) - \nabla_{\vv{x}} H_\rho(\vv{a}_j^T\vv{x}^* - b_j , \lambda_j^*)\\
& = \gamma_j(\vv{z})  \rho \vv{a}_j^T(\vv{x}- \vv{x}^*) \vv{a}_j+ \gamma_j(\vv{z})( \lambda_j- \lambda^*_j)\vv{a}_j  \\
&\nabla_{\vv{\lambda}} H_\rho(\vv{a}_j^T\vv{x} - b_j , \lambda_j) - \nabla_{\vv{\lambda}} H_\rho(\vv{a}_j^T\vv{x}^* - b_j , \lambda_j^*) \\
&= \gamma_j (\vv{z})\vv{a}_j^T(\vv{x} - \vv{x}^*) \vv{e}_j + \frac{1}{\rho} (\gamma_j (\vv{z}) - 1) (\lambda_j - \lambda^*_j) \vv{e}_j
\vspace{-6pt}
\end{align*}}
\end{lemma}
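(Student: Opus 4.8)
The plan is to analyze the penalty function $H_\rho$ directly, exploiting that its $\vv{x}$-gradient and $\vv{\lambda}$-gradient are both expressed through the single scalar quantity $\max(\rho(\vv{a}_j^T\vv{x} - b_j) + \lambda_j, 0)$. Write $s_j(\vv{x},\vv{\lambda}) := \rho(\vv{a}_j^T\vv{x} - b_j) + \lambda_j$, so that $\nabla_{\vv{x}} H_\rho = \max(s_j,0)\,\vv{a}_j$ and $\nabla_{\vv{\lambda}} H_\rho = \frac{1}{\rho}(\max(s_j,0) - \lambda_j)\,\vv{e}_j$. The key observation is that the scalar map $t \mapsto \max(t,0)$ is globally Lipschitz with constant $1$ and, more importantly, satisfies $\max(t,0) - \max(t',0) = \gamma\,(t - t')$ for some $\gamma = \gamma(t,t') \in [0,1]$ whenever $t \neq t'$ (and we may take any value in $[0,1]$ when $t = t'$). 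This is just the mean-value-theorem-type statement for the piecewise-linear convex function $\max(\cdot,0)$: the ``derivative'' is $0$, $1$, or anything in between at the kink, and the secant slope always lies in that range.

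First I would apply this to $t = s_j(\vv{x},\vv{\lambda})$ and $t' = s_j(\vv{x}^*,\vv{\lambda}^*)$, obtaining $\gamma_j(\vv{z}) \in [0,1]$ with
\begin{align*}
\max(s_j(\vv{x},\vv{\lambda}),0) - \max(s_j(\vv{x}^*,\vv{\lambda}^*),0) &= \gamma_j(\vv{z})\big(s_j(\vv{x},\vv{\lambda}) - s_j(\vv{x}^*,\vv{\lambda}^*)\big)\\
&= \gamma_j(\vv{z})\big(\rho\,\vv{a}_j^T(\vv{x}-\vv{x}^*) + (\lambda_j - \lambda_j^*)\big).
\end{align*}
Substituting this into $\nabla_{\vv{x}} H_\rho(\vv{a}_j^T\vv{x} - b_j,\lambda_j) - \nabla_{\vv{x}} H_\rho(\vv{a}_j^T\vv{x}^* - b_j,\lambda_j^*) = \big(\max(s_j(\vv{x},\vv{\lambda}),0) - \max(s_j(\vv{x}^*,\vv{\lambda}^*),0)\big)\vv{a}_j$ gives the first claimed identity immediately. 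For the second identity, I would write $\nabla_{\vv{\lambda}} H_\rho(\vv{a}_j^T\vv{x} - b_j,\lambda_j) - \nabla_{\vv{\lambda}} H_\rho(\vv{a}_j^T\vv{x}^* - b_j,\lambda_j^*) = \frac{1}{\rho}\big[\big(\max(s_j(\vv{x},\vv{\lambda}),0) - \max(s_j(\vv{x}^*,\vv{\lambda}^*),0)\big) - (\lambda_j - \lambda_j^*)\big]\vv{e}_j$, plug in the same expression for the difference of the $\max$ terms, and collect: the $\rho\,\vv{a}_j^T(\vv{x}-\vv{x}^*)$ piece contributes $\gamma_j(\vv{z})\vv{a}_j^T(\vv{x}-\vv{x}^*)\vv{e}_j$ and the $\lambda_j$ pieces combine into $\frac{1}{\rho}(\gamma_j(\vv{z}) - 1)(\lambda_j - \lambda_j^*)\vv{e}_j$, which is exactly the stated formula. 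The same $\gamma_j(\vv{z})$ serves both identities because both come from the single scalar difference of $\max$ terms.

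The only genuinely delicate point is the well-definedness of $\gamma_j(\vv{z})$ when $s_j(\vv{x},\vv{\lambda}) = s_j(\vv{x}^*,\vv{\lambda}^*)$, in which case both sides of the scalar identity vanish and any $\gamma_j \in [0,1]$ works; I would simply fix a choice (say $\gamma_j = 0$) there, noting that the lemma only asserts existence. One should also verify the elementary secant-slope bound $0 \le \frac{\max(t,0) - \max(t',0)}{t - t'} \le 1$ for $t \neq t'$, which follows by checking the three cases ($t, t' \ge 0$; $t, t' \le 0$; opposite signs) or by monotonicity and $1$-Lipschitzness of $\max(\cdot,0)$ together with $\max(t,0) - \max(t',0)$ having the same sign as $t - t'$. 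I do not anticipate any real obstacle here — the lemma is essentially the observation that the whole nonlinearity in Aug-PDGD is channeled through one scalar saturation, for which a mean-value representation with slope in $[0,1]$ is immediate; this is the inequality-constrained analogue of Lemma~\ref{lem:B}.
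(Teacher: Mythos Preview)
Your proposal is correct and follows essentially the same approach as the paper: the paper's proof consists of the single observation that for any $y,y^*\in\R$ there exists $\gamma\in[0,1]$ with $\max(y,0)-\max(y^*,0)=\gamma(y-y^*)$ (taking $\gamma$ to be the secant slope when $y\neq y^*$ and $\gamma=0$ otherwise), and then notes that the lemma follows directly. You carry out the same idea with more of the substitution written out explicitly.
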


\begin{proof}
The lemma directly follows from that for any $y, y^*\in\R$, there exists some $\gamma\in[0,1]$, depending on $y,y^*$ s.t. $\max(y,0) - \max(y^*,0) = \gamma (y - y^*)$. To see this, when $y \neq y^*$, set $\gamma = \frac{\max(y,0) - \max(y^*,0) }{y - y^*} $; otherwise, set $\gamma= 0$.  	
\end{proof}

For any $\vv{z}\in\R^{n+m} $, we define notation $\Gamma(\vv{z}) = \text{diag}(\gamma_1(\vv{z}),\ldots,\gamma_m(\vv{z}))\in\R^{m\times m}$, where $\gamma_j(\vv{z})$ is from Lemma~\ref{lem:ineq:gamma_def}. 
With notation $\Gamma(\vv{z})$, we can then rewrite the Aug-PDGD \eqref{eq:ineq:pdgd_p} as
{\small
\begin{align*}
\dot{\vv{x}} &=- ( \nabla_{\vv{x}} L (\vv{x},\vv{\lambda}) - \nabla_{\vv{x}} L  (\vv{x}^*, \vv{\lambda}^*) ) \\
&=  - (\nabla f(\vv{x}) - \nabla f(\vv{x}^*) ) \\
&\quad -  \sum_{j=1}^m \Bigg[\nabla_{\vv{x}} H_\rho(\vv{a}_j^T\vv{x} - b_j , \lambda_j) - \nabla_{\vv{x}} H_\rho(\vv{a}_j^T\vv{x}^* - b_j , \lambda_j^*) \Bigg]\\
&= - B(\vv{x})(\vv{x} - \vv{x}^*) - \rho A^T\Gamma(\vv{z}) A(\vv{x}- \vv{x}^*) - A^T \Gamma(\vv{z})(\vv{\lambda} - \vv{\lambda}^*) 
\end{align*} }where $\mu I \preceq B(\vv{x}) \preceq \ell I$ (Lemma~\ref{lem:B}). We then rewrite \eqref{eq:ineq:pdgd_d}, 
{\small\begin{align*}
\dot{\vv{\lambda}} &= \eta \nabla_{\vv{\lambda}} L (\vv{x},\vv{\lambda}) - \eta \nabla_{\vv{\lambda}} L (\vv{x}^*,\vv{\lambda}^*) \\
&= \eta \sum_{j=1}^m\Big[ \nabla_{\vv{\lambda}} H_\rho(\vv{a}_j^T\vv{x} - b_j,\lambda_j ) - \nabla_{\vv{\lambda}} H_\rho(a_j^T\vv{x}^* - b_j, {\lambda}_j^* )\Big]\\
&= \eta \Gamma(\vv{z}) A (\vv{x}-\vv{x}^*) + \frac{\eta}{\rho} (\Gamma(\vv{z}) - I) ( \vv{\lambda} -\vv{\lambda}^*)
\end{align*}}Then, the Aug-PDGD \eqref{eq:ineq:pdgd} can be written as,
{\small\begin{align}
\dot{\vv{z}} & = \left[ \begin{array}{cc}
	- B(\vv{x}) - \rho A^T\Gamma(\vv{z}) A &  - A^T \Gamma(\vv{z}) \\
	\eta \Gamma(\vv{z}) A & \frac{ \eta}{\rho}(\Gamma(\vv{z}) - I)
	\end{array}  \right] (\vv{z} - \vv{z}^*) \nonumber \\
	&\coloneqq  G(\vv{z})({\vv{z}} -\vv{z}^*). \label{eq:ineq:linear}
\end{align}}Then, $\frac{d}{dt} V(\vv{z} ) $ can be written as 
{\small\begin{align}
&\frac{d}{dt} V(\vv{z})=  \dot{\vv{z}}^T P ( \vv{z}-\vv{z}^*) + (\vv{z}-\vv{z}^*)^T P \dot{\vv{z}}\nonumber \\
&=(\vv{z}-\vv{z}^*) ^T(G(\vv{z})^TP + PG(\vv{z}))(\vv{z} - \vv{z}^*) \label{eq:ineq:dvgp}
\end{align}}
Therefore, to prove \eqref{eq:ineq:dv}, it is sufficient to show the following Lemma, whose proof is in Appendix-\ref{appendix:ineq:qp}.
\begin{lemma}\label{lem:ineq:qp}
	For any $\vv{z}\in\R^{n+m}$, we have
	$$G(\vv{z})^TP + PG(\vv{z})  \preceq -\tau P \label{eq:ineq:gp}$$
\end{lemma}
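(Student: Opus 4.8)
\textbf{Proof proposal for Lemma~\ref{lem:ineq:qp}.}

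The plan is to establish the matrix inequality $G(\vv{z})^T P + P G(\vv{z}) \preceq -\tau P$ pointwise in $\vv{z}$ by reducing it, via the structure $\Gamma(\vv{z}) = \text{diag}(\gamma_1,\ldots,\gamma_m)$ with each $\gamma_j\in[0,1]$, to a uniform bound that holds for all admissible $\Gamma$ and all $B(\vv{x})$ with $\mu I \preceq B(\vv{x})\preceq \ell I$. First I would write out the symmetric matrix $M(\vv{z}) := -(G(\vv{z})^T P + P G(\vv{z})) - \tau P$ in block form, using $P = \begin{bmatrix}\eta c I & \eta A^T\\ \eta A & cI\end{bmatrix}$ and the expression for $G(\vv{z})$ from \eqref{eq:ineq:linear}. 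The $(1,1)$ block will involve $2\eta c B + 2\eta c\rho A^T\Gamma A - \eta^2 A^T\Gamma A - \eta^2 A^T\Gamma A$ type terms plus $-\tau\eta c I$; the $(2,2)$ block will involve $-\frac{2\eta c}{\rho}(\Gamma - I) - 2\eta^2 A^T A\cdot(\text{something})$ and $-\tau c I$; the off-diagonal block will collect the cross terms. The goal is to show $M(\vv{z})\succeq 0$.

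The main technical step is to control the off-diagonal block by the diagonal blocks via a Schur-complement / Young's inequality argument. I expect to split $M$ as a sum of a ``dominant'' positive part coming from $2\eta c B \succeq 2\eta c\mu I$ in the $(1,1)$ block and from the $\frac{2\eta c}{\rho}(I-\Gamma)$ plus $\tau c I$ contributions in the $(2,2)$ block, against ``error'' terms that are linear in $A^T\Gamma A$, $A^T\Gamma$, $A$, and use the bounds $\|A\|^2 \le \kappa_2$, $0\preceq\Gamma\preceq I$, $0\preceq A^T\Gamma A \preceq \kappa_2 I$. The delicate point is that when $\gamma_j\to 0$ the $(2,2)$ block's ``good'' term $\frac{2\eta c}{\rho}(I-\Gamma)$ is largest (helpful), but the coupling term $\eta\Gamma A$ is small (also helpful), whereas when $\gamma_j\to 1$ the reverse happens; so I would handle the two regimes through the single combination that appears naturally, keeping $\Gamma$ symbolic and only using $0\preceq\Gamma\preceq I$ together with elementary inequalities like $\Gamma^2\preceq\Gamma$ and $\Gamma(I-\Gamma)\preceq\frac14 I$. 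The specific large constant $c = 20\ell[\max(\frac{\rho\kappa_2}{\mu},\frac{\ell}{\mu})]^2[\max(\frac{\eta}{\ell\rho},\frac{\ell}{\mu})]^2\frac{\kappa_2}{\kappa_1}$ and $\tau = \frac{\eta\kappa_1}{2c}$ should be exactly what is needed to absorb all cross terms; I would verify the algebra by bounding each off-block coupling of the form $\langle u, \eta A^T\Gamma v\rangle$ by $\epsilon\|u\|^2$-type terms with $\epsilon$ chosen proportional to the slack in the diagonal blocks.

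I anticipate the hardest part is the bookkeeping of the cross term between the primal and dual blocks: the $(1,2)$ block of $M$ contains contributions both from $PG$ acting through $-A^T\Gamma$ in $G$ and from $G^TP$ acting through $\eta\Gamma A$ and $\frac{\eta}{\rho}(\Gamma-I)$, as well as from $-\tau\eta A^T$, and these do not obviously combine into something small — it is precisely here that the non-diagonal ($\eta A^T$) entries of $P$ are essential, because they generate a $-2\eta^2 A^T\Gamma A$ term in the $(1,1)$ block (from $\eta A$ in $P$ meeting $-A^T\Gamma$ in $G$) that is needed to dominate a $+\eta^2\rho$-scaled $A^T\Gamma^2 A$ term. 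I would structure the final estimate as a $2\times 2$ block PSD check: write $M\succeq \begin{bmatrix}\alpha I & \beta A^T\\ \beta A & \delta I\end{bmatrix}$ after all scalar bounds are applied, where $\alpha,\beta,\delta$ are explicit functions of the parameters, and then invoke $\alpha\delta\ge\beta^2\kappa_2 \Rightarrow$ PSD. Checking that $c$ and $\tau$ as defined make $\alpha\delta\ge\beta^2\kappa_2$ hold is the concluding computation, and I would relegate the routine inequality manipulations to the appendix as the statement indicates.
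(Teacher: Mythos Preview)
Your overall strategy---expand $\tilde Q := -(G(\vv{z})^TP+PG(\vv{z}))-\tau P$ in $2\times 2$ block form and certify $\tilde Q\succeq 0$ via a Schur-complement argument, uniformly over $\mu I\preceq B\preceq \ell I$ and diagonal $0\preceq\Gamma\preceq I$---matches the paper's. Two concrete steps, however, are not covered by the tools you list and are exactly where the work lies.

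First, the $(2,2)$ block is $Q_2=\eta(\Gamma AA^T+AA^T\Gamma)+\frac{2\eta c}{\rho}(I-\Gamma)-\frac{\eta\kappa_1}{2}I$ (not an $A^TA$ term, and the $\tau cI$ enters with a minus sign, so it is not a ``good'' contribution). The symmetrized product $\Gamma AA^T+AA^T\Gamma$ is in general \emph{indefinite} when $\Gamma$ has mixed $0/1$ entries and $AA^T$ is not diagonal, so none of $0\preceq\Gamma\preceq I$, $\Gamma^2\preceq\Gamma$, $\Gamma(I-\Gamma)\preceq\tfrac14 I$ gives a useful lower bound on $Q_2$ by itself. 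The paper handles this by a separate lemma (Lemma~\ref{lem:ineq:gamma}): the map $(\gamma_1,\dots,\gamma_m)\mapsto Q_2$ is affine in each $\gamma_j$, so it suffices to check the $2^m$ vertices $\gamma_j\in\{0,1\}$; at every vertex a short block computation using $c\ge\rho\kappa_2$ gives $Q_2\succeq\eta AA^T$. This vertex/convexity reduction is the missing idea in your plan. Second, your proposed endgame---reduce to $M\succeq\left[\begin{smallmatrix}\alpha I&\beta A^T\\ \beta A&\delta I\end{smallmatrix}\right]$ and check $\alpha\delta\ge\beta^2\kappa_2$---does not go through, because the off-diagonal block is $Q_3=\eta BA^T+\eta\rho A^T\Gamma AA^T+\tfrac{\eta^2}{\rho}A^T(I-\Gamma)-\tfrac{\eta^2\kappa_1}{2c}A^T$, which is \emph{not} a scalar multiple of $A^T$ and cannot be ``scalar-bounded'' into that form. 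The paper instead keeps $Q_3$ intact, writes $Q_3=Q_4A^T+Q_6$ with $Q_4=B+\rho A^T\Gamma A-\tfrac{\eta\kappa_1}{2c}I$ and $Q_6=\tfrac{\eta}{\rho}A^T(I-\Gamma)$, and bounds the true Schur complement $Q_3Q_2^{-1}Q_3^T\preceq \tfrac{1}{\eta}Q_3(AA^T)^{-1}Q_3^T$ using $A^T(AA^T)^{-1}A\preceq I$; the choice of $c$ then makes $Q_1-Q_3Q_2^{-1}Q_3^T\succeq 0$.
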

Lemma \ref{lem:ineq:qp} and \eqref{eq:ineq:dvgp} lead to \eqref{eq:ineq:dv}, concluding the proof.}


 
 \subsection{Discrete Time Primal-Dual Gradient Algorithm}\label{sec:discrete}
 Lastly, we briefly discuss the stability of the discretization of (Aug-)PDGD. It is known that the Euler discretization of an exponentially stable dynamical system possesses geometric convergence speed \cite{stuart1994numerical,stetter1973analysis}, provided the discretization step size is small enough. For completeness, we provide the following Lemma~\ref{lem:discretization}, whose proof can be found in \ifthenelse{\boolean{isfullversion}}{Appendix~\ref{appendix:discretization}.}{our online report \cite[Appendix-F]{fullversion}. }
     
 
 \begin{lemma}\label{lem:discretization}
 	Consider a continuous-time dynamical system $\dot{\vv{z}} = F (\vv{z}) $ where $F$ is $\nu$-Lipschitz continuous. Suppose $\vv{z}^*$ is an equilibrium point and there exists positive definite matrix $P$, constant $\tau>0$, and Lyapunov function $V(\tilde{\vv{z}}) = \tilde{\vv{z}}^T P \tilde{\vv{z}} $ such that $\frac{d}{dt} V(\tilde{\vv{z}}) \leq -\tau V(\tilde{\vv{z}}) $, where $\tilde{\vv{z}} := \vv{z} - \vv{z}^*$. Then its Euler discretization with step size $\delta>0$, 
 	$$\vv{y}(k+1) = \vv{y}(k) + \delta F(\vv{y}(k)) $$
 	satisfies $\Vert \vv{y}(k) - \vv{z}^*\Vert\leq C (e^{-\frac{\tau\delta}{2}} + \frac{\kappa_P \nu^2   \delta^2}{2})^k $, where $\kappa_P$ is the condition number of matrix $P$, and $C>0$ is some constant that depends on $\nu,\tau,P$ and $\Vert \vv{y}(0) - \vv{z}^*\Vert$. Further, $e^{-\frac{\tau\delta}{2}} + \frac{\kappa_P \nu^2   \delta^2}{2} <1$ for small enough $\delta$.
 \end{lemma}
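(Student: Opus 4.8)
The plan is to prove Lemma~\ref{lem:discretization} by combining the continuous-time contraction estimate with a standard one-step error analysis of the explicit Euler map. First I would pass to the error variable $\tilde{\vv{z}} := \vv{z} - \vv{z}^*$ and note that, since $\vv{z}^*$ is an equilibrium, $F(\vv{z}^*) = 0$, so the Euler update reads $\tilde{\vv{y}}(k+1) = \tilde{\vv{y}}(k) + \delta\bigl(F(\vv{y}(k)) - F(\vv{z}^*)\bigr)$, and by $\nu$-Lipschitzness $\Vert F(\vv{y}(k)) - F(\vv{z}^*)\Vert \le \nu\Vert\tilde{\vv{y}}(k)\Vert$. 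I would track the quadratic quantity $V(\tilde{\vv{y}}(k)) = \tilde{\vv{y}}(k)^T P\tilde{\vv{y}}(k)$ rather than the Euclidean norm directly, because $V$ is what contracts along the flow; the passage between $V$ and $\Vert\cdot\Vert^2$ costs only the condition number $\kappa_P$ at the very end.

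The key step is to show that one Euler step with step size $\delta$ shrinks $V$ by roughly $e^{-\tau\delta}$ up to a term of order $\delta^3$. I would write the flow map $\Phi_\delta$ of the ODE over time $\delta$, which by the hypothesis $\frac{d}{dt}V(\tilde{\vv{z}}) \le -\tau V(\tilde{\vv{z}})$ and Gr\"onwall satisfies $V(\Phi_\delta(\tilde{\vv{y}})) \le e^{-\tau\delta} V(\tilde{\vv{y}})$, hence $\Vert P^{1/2}\Phi_\delta(\tilde{\vv{y}})\Vert \le e^{-\tau\delta/2}\Vert P^{1/2}\tilde{\vv{y}}\Vert$. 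Then I would bound the discrepancy between the Euler step $\tilde{\vv{y}} + \delta F$ and the true flow $\Phi_\delta(\tilde{\vv{y}})$: the local truncation error of explicit Euler for a $\nu$-Lipschitz vector field is $O(\delta^2)$ in norm — concretely $\Vert \tilde{\vv{y}}(k) + \delta F(\vv{y}(k)) - \Phi_\delta(\tilde{\vv{y}}(k))\Vert \le \tfrac{1}{2}\nu^2\delta^2 e^{\nu\delta}\Vert\tilde{\vv{y}}(k)\Vert$ or a similar clean bound, obtained by Taylor-expanding $\Phi_t$ and using that $\frac{d}{dt}F(\Phi_t(\tilde{\vv{y}}))$ is controlled by $\nu^2\Vert\tilde{\vv{y}}\Vert$ after another Gr\"onwall step. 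Measuring both quantities in the $P^{1/2}$-weighted norm and applying the triangle inequality gives, for the weighted norm $\Vert\tilde{\vv{y}}(k)\Vert_P := \Vert P^{1/2}\tilde{\vv{y}}(k)\Vert$,
\[
\Vert\tilde{\vv{y}}(k+1)\Vert_P \;\le\; \Bigl(e^{-\tau\delta/2} + \tfrac{1}{2}\kappa_P\,\nu^2\,\delta^2\Bigr)\,\Vert\tilde{\vv{y}}(k)\Vert_P,
\]
where the $\kappa_P$ appears because the truncation error was naturally bounded in Euclidean norm and must be converted into the $P$-norm. Iterating this one-step contraction from $k=0$ yields $\Vert\tilde{\vv{y}}(k)\Vert_P \le (e^{-\tau\delta/2} + \tfrac{1}{2}\kappa_P\nu^2\delta^2)^k \Vert\tilde{\vv{y}}(0)\Vert_P$, and converting back to Euclidean norm by $\Vert\tilde{\vv{y}}(k)\Vert \le \lambda_{\min}(P)^{-1/2}\Vert\tilde{\vv{y}}(k)\Vert_P$ absorbs another spectral factor into the constant $C$, giving the stated bound $\Vert\vv{y}(k) - \vv{z}^*\Vert \le C(e^{-\tau\delta/2} + \tfrac{1}{2}\kappa_P\nu^2\delta^2)^k$.

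Finally I would argue the contraction factor is genuinely less than $1$ for small $\delta$: since $e^{-\tau\delta/2} = 1 - \tfrac{\tau\delta}{2} + O(\delta^2)$, the factor equals $1 - \tfrac{\tau\delta}{2} + O(\delta^2)$, which is strictly below $1$ once $\delta$ is small enough that the negative linear term dominates the positive $O(\delta^2)$ terms (explicitly, it suffices that $\tfrac{1}{2}\kappa_P\nu^2\delta^2 < \tfrac{\tau\delta}{2}$, i.e. $\delta < \tau/(\kappa_P\nu^2)$, together with the elementary bound $e^{-\tau\delta/2}\le 1-\tfrac{\tau\delta}{2}+\tfrac{\tau^2\delta^2}{8}$). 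The main obstacle I anticipate is getting the Euler truncation error in the right form with the right dependence — specifically making sure the bound is $O(\delta^2\Vert\tilde{\vv{y}}\Vert)$ with a constant involving only $\nu$ (and a harmless $e^{\nu\delta}$ factor), so that after $k$ iterations the errors compound multiplicatively into a clean geometric rate rather than accumulating additively; this requires being careful that every intermediate estimate is homogeneous of degree one in $\Vert\tilde{\vv{y}}\Vert$, which in turn relies on $F(\vv{z}^*)=0$ and on not invoking any smoothness of $F$ beyond Lipschitz continuity.
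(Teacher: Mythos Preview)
Your proposal is correct and follows essentially the same route as the paper: compare one Euler step to the exact flow over time $\delta$, use the Lyapunov decay to contract the flow in the $P$-norm by $e^{-\tau\delta/2}$, bound the local truncation error by $O(\nu^2\delta^2\Vert\tilde{\vv{y}}\Vert)$, combine via the triangle inequality in the $P$-weighted norm, and iterate. The only noteworthy difference is in how the truncation constant is obtained: you invoke Gr\"onwall growth $\Vert\vv{z}(\xi)-\vv{z}^*\Vert\le e^{\nu\xi}\Vert\vv{z}(0)-\vv{z}^*\Vert$, which leaves an extra $e^{\nu\delta}$ factor, whereas the paper works entirely in the $P$-norm (with Lipschitz constant $\nu'=\nu\sqrt{\kappa_P}$) and uses the Lyapunov decay itself to bound $\Vert\vv{z}(\xi)-\vv{z}^*\Vert_P\le\Vert\vv{z}(0)-\vv{z}^*\Vert_P$ along the flow, yielding exactly $\tfrac{1}{2}\nu'^2\delta^2=\tfrac{1}{2}\kappa_P\nu^2\delta^2$ with no exponential overshoot; adopting that small trick gives the constant in the lemma statement verbatim.
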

 
 Based on the proof in Section \ref{subsec:eq} and \ref{subsec:ineq}, both PDGD \eqref{eq:pdgd_eq} and Aug-PDGD \eqref{eq:ineq:pdgd} satisfy the conditions in Lemma~\ref{lem:discretization}. 
 Hence the discretized versions converge geometrically fast for small enough discretization step size $\delta$.

 \section{Illustrative Examples} \label{sec:examples}
 \subsection{Equality Constrained Case}

We numerically study PDGD with affine equality constraints and quadratic cost functions. We let $n = 5$, $m =2$, $f(\vv{x}) = \frac{1}{2} \vv{x}^T W \vv{x}$, where $W = 10 I + W_0W_0^T$, and $W_0$ is a $n$-by-$n$ Gaussian random matrix. $A$ and $\vv{b}$ are also Gaussian random matrices (or vectors). 
 Since the cost is quadratic, the PDGD \eqref{eq:pdgd_eq} becomes an Linear Time-Invariant (LTI) system and we can determine the PDGD decaying rate by numerically calculating the eigenvalues of the resulting LTI system. We plot the decaying rate as a function of $\eta$ in the upper plot of Fig.~\ref{fig:equality}. We also simulate the PDGD for a selected number of $\eta$'s, and plot the distance to equilibrium point as a function of time in the lower plot of Fig.~\ref{fig:equality}. In both plots, we observe that increasing $\eta$ beyond a certain threshold does not lead to faster decaying rate, an interesting phenomenon that may be worth further studying.
 
 \begin{figure}
 	\begin{center}
 		 	\includegraphics[scale = 0.18]{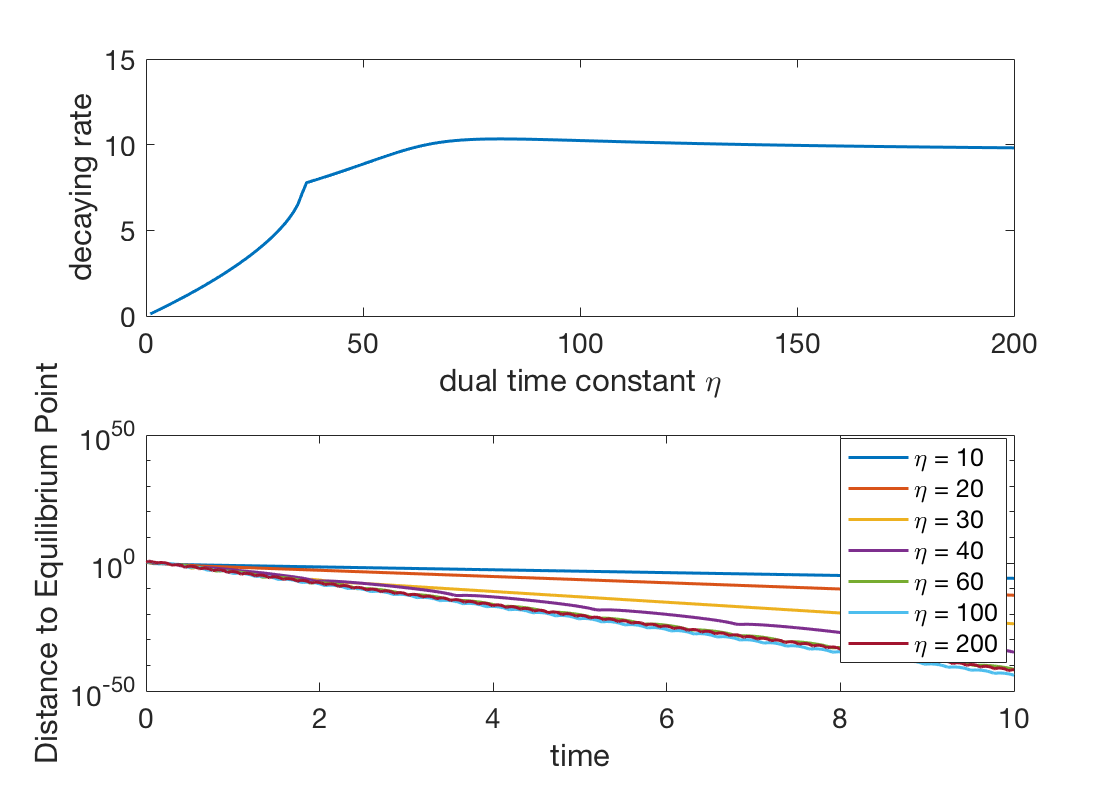}
 	\end{center}
 	\caption{Simulation of PDGD for the equality constrained case. Upper: decaying rate as a function of $\eta$. Lower: distance to the equilibrium point for some values of $\eta$. }\label{fig:equality}
 \end{figure}

\subsection{Inequality Constrained Case}
{\color{black}We numerically run the Aug-PDGD on a problem of size $n=50,m=40$. We use the loss for logistic regression \cite{logit_regression} (with synthetic data) as our cost function $f$. For the affine inequality constraint $A\vv{x}\leq \vv{b}$, every entry of $A$, $\vv{b}$ is generated independently from standard normal distribution. We fix $\rho = 1$ but try different $\eta$'s, and show the results in Fig.~\ref{fig:inequality}. Similar to the equality constrained case, here we observe that when $\eta$ is large, the decaying rate doesn't increase with $\eta$. }
\begin{figure}
	\begin{center}
	\includegraphics[scale = 0.43]{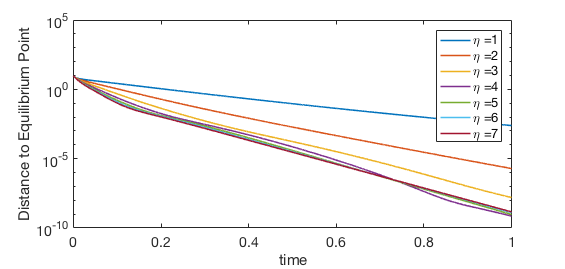}
	\end{center}
	\caption{Simulation of Aug-PDGD for the inequality constrained case under different values of $\eta$.}\label{fig:inequality}
\end{figure}
%
%
\section{Conclusions}\label{sec:conclusion}

In this paper, we study the primal-dual gradient dynamics for optimization with strongly convex and smooth objective and affine equality or inequality constraints. We prove the global exponential stability of PDGD and give explicit bounds on the decaying rates. 
Future work include 1) providing tighter bounds on the decaying rates, especially for the inequality constrained case; 2) relaxing Assumption~\ref{assump:A} for the inequality constrained case.






\bibliographystyle{IEEETran}
\bibliography{bib_primal_dual}

\appendix

\subsection{Proof of Lemma~\ref{lem:qp}}\label{appendix:qp}
{\color{black}Recall the definition of $G(\vv{z})$ in \eqref{eq:pdgd_linear}, 
{\small$$G(\vv{z}) =  \left[ \begin{array}{cc}
- B(\vv{x})  &  - A^T  \\
\eta A & 0
\end{array}  \right] $$}It can be seen that $G(\vv{z})$ depends on $\vv{z}$ through $B(\vv{x})$, and $B(\vv{x})$ satisfies $\mu I\preceq B(\vv{x})\preceq \ell I$.
In the remaining of this section, we will drop the dependence of $G(\vv{z})$ and $B(\vv{x})$ on $\vv{z}$ and $\vv{x}$, and prove $G^TP + PG\preceq -\tau P$, for \emph{any} symmetric $B$ satisfying $\mu I\preceq B\preceq \ell I$. Let $Q = - G^TP - PG$, then,}
{\small	\begin{align*}
	Q -\tau P
	&= \left[\begin{array}{cc}
	2\eta c B  -2 \eta^2 A^TA  - \eta^2 \kappa_1 I & \eta B  A^T - \frac{\eta^2 \kappa_1}{c} A^T\\
	\eta A B  - \frac{\eta^2 \kappa_1}{c}  A & 2\eta A A^T - \eta \kappa_1 I
	\end{array}\right] \\
	&\succeq \left[\begin{array}{cc}
	2\eta c B  -2 \eta^2  \kappa_2 I  - \eta^2 \kappa_1 I & \eta B A^T - \frac{\eta^2 \kappa_1}{c} A^T\\
	\eta A B - \frac{\eta^2 \kappa_1}{c}  A & \eta A A^T 
	\end{array}\right] 
	\end{align*}}where we have used $AA^T \succeq \kappa_1 I$, $A^T A \preceq \kappa_2 I$. We will next use the Schur complement argument. Consider
{\small	\begin{align*}
	&( \eta B A^T - \frac{\eta^2 \kappa_1}{c} A^T ) (\eta AA^T)^{-1} (	\eta A B - \frac{\eta^2 \kappa_1}{c}  A) \\
	&=  \eta B  A^T (AA^T)^{-1}AB - \frac{\eta^2 \kappa_1}{c}(B A^T(AA^T)^{-1}A \\
	&\qquad+ A^T (AA^T)^{-1} AB ) + \frac{\eta^3 \kappa_1^2}{c^2} A^T (AA^T)^{-1} A\\
	&\preceq \eta\ell  B + \frac{\eta^2 \kappa_1}{c} 2 \ell  I +  \frac{\eta^3 \kappa_1^2}{c^2} I 
	\end{align*}}where we have used $A^T (AA^T)^{-1} A\preceq I$. Recall that $c=4\max(\ell,\frac{\eta \kappa_2}{\mu})$, $\kappa_1\leq \kappa_2$ and $\mu\leq l$. Then we have, i) $\frac{3}{4} \eta c B \succeq \frac{3}{4} \eta c \mu I \succeq   3  \eta^2 \kappa_2 I \succeq 2 \eta^2 \kappa_2 I + \eta^2 \kappa_1 I$, ii) $\frac{1}{4} \eta c B  \succeq \eta \ell   B $, iii)
	 $ \frac{1}{2} \eta c B  \succeq  \frac{1}{2} \eta c \mu I \succeq  \frac{\eta^2}{c} 2 \ell \kappa_1 I$, and iv)$ \frac{1}{2} \eta c B  \succeq \frac{1}{2} \eta c \mu I \succeq   \frac{\eta^3 \kappa_1^2}{c^2} I$. Summing them up, we have
{\small	\begin{align*}
	2 \eta c B  &\succeq 2 \eta^2 \kappa_2 I + \eta^2 \kappa_1 I + \eta \ell  B + \frac{\eta^2}{c} 2 \ell \kappa_1 I   +  \frac{\eta^3 \kappa_1^2}{c^2} I \\
	&\succeq 2 \eta^2 \kappa_2 I + \eta^2 \kappa_1 I \\
	&+ ( \eta B  A^T - \frac{\eta^2 \kappa_1}{c} A^T ) (\eta AA^T)^{-1} (	\eta A B  - \frac{\eta^2 \kappa_1}{c}  A)
	\end{align*}}As a result, we have $Q  - \tau P\succeq 0$. 
\vspace{-6pt}
\subsection{Proof of Lemma~\ref{lem:ineq:qp}}\label{appendix:ineq:qp}
{\color{black}Recall the definition of $G(\vv{z})$ in \eqref{eq:ineq:linear}, 
{\small$$G(\vv{z}) = \left[ \begin{array}{cc}
- B(\vv{x}) - \rho A^T\Gamma(\vv{z}) A &  - A^T \Gamma(\vv{z}) \\
\eta \Gamma(\vv{z}) A & \frac{ \eta}{\rho}(\Gamma(\vv{z}) - I)
\end{array}  \right] .$$}It can be seen that $G(\vv{z})$ depends on the state $\vv{z}$ through $B(\vv{x})$, $\Gamma(\vv{z})$. Note for any $\vv{z}\in\R^{n+m}$, $B(\vv{x})$ is a symmetric matrix satisfying $\mu I\preceq B(\vv{x})\preceq \ell I$, and $\Gamma(\vv{z})$ is a diagonal matrix with each entry in $[0,1]$. In the following, to simplify notation, we will drop the dependence of $G$, $B$, $\Gamma$ on $\vv{z}$, and prove $G^TP + PG\preceq -\tau P$ for any symmetric $B$ satisfing $\mu I \preceq B\preceq \ell I$ and for any diagonal $\Gamma$ with each entry bounded in $[0,1]$. Let $Q = -G^TP - PG$, and }
{\small$$\tilde{Q} = Q - \tau P =\left[ \begin{array}{cc}
Q_1 & Q_3\\
Q_3^T& Q_2
\end{array}\right] $$}After straightforward calculations, we have
{\small\begin{align*}
Q_1 &= 2\eta c B+ 2(\eta c\rho - \eta^2) A^T\Gamma A - \frac{\eta^2\kappa_1}{2} I \\ 
Q_2 & = \eta (\Gamma A A^T + AA^T\Gamma) + \frac{2\eta c}{\rho} (I - \Gamma ) - \frac{\eta\kappa_1}{2} I \\
Q_3 & = \eta B A^T + \eta \rho A^T\Gamma A A^T + \frac{\eta^2}{\rho} A^T(I - \Gamma) -  \frac{\eta^2\kappa_1}{2c} A^T \\
&= \eta(\underbrace{ B  + \overbrace{ \rho A^T\Gamma A  - \frac{\eta\kappa_1}{2c}  I }^{Q_5}}_{Q_4})A^T + \eta \cdot \underbrace{\frac{\eta}{\rho} A^T(I - \Gamma) }_{Q_6}
\end{align*}}{\color{black}Using the Schur complement argument, to prove $\tilde{Q} \succeq 0$ it suffices to prove $Q_2\succ 0$, $Q_1 - Q_3 Q_2^{-1} Q_3^T \succeq 0$. To show this, we will first lower bound $Q_2$, then upper bound $Q_3 Q_2^{-1} Q_3^T$, next lower bound $Q_1 $ and finally show $Q_1 - Q_3 Q_2^{-1} Q_3^T \succeq 0$.}

\noindent\textbf{\textcolor{black}{Lower bounding $Q_2$.}} We will use the following lemma, whose proof is deferred to Appendix-\ref{subsec:ineq:gamma}.
\begin{lemma}\label{lem:ineq:gamma}
	If $c\geq \kappa_2\rho$, as long as $\Gamma$ is a diagonal matrix with each entry bounded in $[0,1]$, we have $\eta (\Gamma A A^T + AA^T\Gamma) + \frac{2\eta c}{\rho} (I - \Gamma ) \succeq \frac{3}{2} \eta AA^T  $.
\end{lemma}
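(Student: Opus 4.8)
The plan is to reduce the stated matrix inequality to a scalar Young-type estimate after one algebraic rearrangement. Write $M = AA^T$, so that Assumption~\ref{assump:A} gives $\kappa_1 I \preceq M \preceq \kappa_2 I$, and set $N = I - \Gamma$; since each $\gamma_j\in[0,1]$, $N$ is diagonal with entries in $[0,1]$, so in particular $0 \preceq N^2 \preceq N \preceq I$. Dividing the claimed inequality by $\eta>0$ and substituting $\Gamma = I - N$, the left-hand side becomes $2M - (NM + MN) + \tfrac{2c}{\rho}N$, so the assertion is equivalent to
$$\tfrac12 M + \tfrac{2c}{\rho} N \;\succeq\; NM + MN.$$

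Next I would bound the cross term $NM+MN$ from above as a quadratic form. For any $v\in\R^m$, because $N$ and $M$ are symmetric, $v^T(NM+MN)v = 2(Nv)^T(Mv)$, and Young's inequality $2x^Ty \le \epsilon\|x\|^2 + \epsilon^{-1}\|y\|^2$ with the choice $\epsilon = 2\kappa_2$ gives
$$v^T(NM+MN)v \;\le\; 2\kappa_2\, v^T N^2 v + \tfrac{1}{2\kappa_2}\, v^T M^2 v \;\le\; 2\kappa_2\, v^T N v + \tfrac12\, v^T M v,$$
where the last step uses $N^2 \preceq N$ and $M^2 \preceq \kappa_2 M$. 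Finally, the hypothesis $c \ge \kappa_2\rho$ gives $\tfrac{2c}{\rho} \ge 2\kappa_2$, hence $\tfrac{2c}{\rho}N \succeq 2\kappa_2 N$ and therefore $\tfrac12 M + \tfrac{2c}{\rho}N \succeq 2\kappa_2 N + \tfrac12 M \succeq NM + MN$, which is exactly the reduced inequality; multiplying back by $\eta$ concludes.

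I do not expect a genuine obstacle here. The only points requiring care are the splitting identity $\Gamma M + M\Gamma = 2M - NM - MN$, which leaves a $\tfrac12 M$ slack after subtracting the target $\tfrac32 M$, and the Young parameter $\epsilon = 2\kappa_2$, which is forced so that the $v^T M v$ coefficient lands at exactly $\tfrac12$. It is also worth noting that positive definiteness of $M$ is never used — only $M \preceq \kappa_2 I$ together with $N^2 \preceq N$ — so the argument is unaffected by $\Gamma$ having zero diagonal entries.
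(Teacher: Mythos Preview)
Your proof is correct and takes a genuinely different route from the paper's. The paper argues that $M(\gamma_1,\dots,\gamma_m)=\eta(\Gamma AA^T+AA^T\Gamma)+\tfrac{2\eta c}{\rho}(I-\Gamma)$ is affine in each $\gamma_j$, hence a convex combination of the $2^m$ vertex matrices with $\gamma_j\in\{0,1\}$; by permutation it suffices to check $M_k$ with $\Gamma=\mathrm{diag}(1,\dots,1,0,\dots,0)$, which is done via an explicit $2\times 2$ block decomposition of $AA^T$ and the bound $\Lambda_2\preceq\kappa_2 I\preceq\tfrac{c}{\rho}I$. Your argument instead substitutes $N=I-\Gamma$, reduces the claim to $\tfrac12 M+\tfrac{2c}{\rho}N\succeq NM+MN$, and dispatches the cross term with a single Young inequality using $N^2\preceq N$ and $M^2\preceq\kappa_2 M$. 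Your approach is shorter, avoids any case analysis or block structure, and indeed uses only $0\preceq AA^T\preceq\kappa_2 I$ rather than the full-rank condition; the paper's vertex-enumeration technique, on the other hand, makes the extremal cases explicit and is a template that can be reused when a direct scalar inequality is not available (as it is in Appendix~\ref{subsec:rank:qp} for the relaxed-rank setting).
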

Using Lemma~\ref{lem:ineq:gamma}, we have $Q_2 \succeq \frac{3}{2}\eta AA^T - \frac{\eta\kappa_1}{2} I \succeq \eta AA^T $. 
{\color{black}\textbf{Upper bounding $Q_3 Q_2^{-1} Q_3^T$.}} Using the lower bound on $Q_2$, 
{\small\begin{align}
& Q_3 Q_2^{-1} Q_3^T       \preceq \frac{1}{\eta} Q_3 (AA^T)^{-1} Q_3^T \nonumber\\
&=  \eta Q_4 A^T(AA^T)^{-1} A Q_4^T  + \eta Q_4 A^T (AA^T)^{-1} Q_6^T\nonumber\\
&\qquad + \eta Q_6 (AA^T)^{-1} A Q_4^T+ \eta Q_6(AA^T)^{-1} Q_6^T \nonumber \\
& \preceq \eta Q_4 Q_4^T + 2\eta \Vert  Q_4 A^T (AA^T)^{-1} Q_6^T\Vert I \nonumber\\
&\qquad +\eta \Vert Q_6(AA^T)^{-1} Q_6^T\Vert  I  \label{eq:ineq:Q323}
\end{align}}where in the last inequality we have used $A^T(AA^T)^{-1} A \preceq I$. To further bound $Q_3 Q_2^{-1} Q_3^T$, we use the following,
{\small\begin{align}
2\eta \Vert  Q_4 A^T (AA^T)^{-1} Q_6^T\Vert  &\leq 2\eta \Vert Q_4 \Vert \| A^T\Vert \|(AA^T)^{-1}\| \|Q_6\| \nonumber \\
& \leq 2\frac{\eta^2}{\rho} (\ell +  \rho\kappa_2+  \frac{\eta  \kappa_1}{2c } ) \frac{  \kappa_2}{\kappa_1}    \coloneqq h_1(c)  \label{eq:ineq:h1}
\end{align}}where we intentionally write the last quantity as a function $h_1(c)$ depending on $c$ for reasons to be clear later. We further bound 
{\small\begin{align}
\eta \Vert Q_6(AA^T)^{-1} Q_6^T\Vert  & \leq \eta \| Q_6\|^2 \| (AA^T)^{-1}\| \leq \frac{\eta^3}{\rho^2} \frac{\kappa_2}{\kappa_1} \coloneqq h_2(c)  \label{eq:ineq:h2}
\end{align}}Then, we bound $\eta Q_4 Q_4^T$ as 
{\small\begin{align}
\eta Q_4 Q_4^T & = \eta (B + Q_5) ^2  \preceq \eta B^2 + 2\eta \| B \| \|Q_5\| I +  \eta \|Q_5\|^2 I \nonumber\\
&\preceq \eta \ell B  + (\underbrace{ 2\eta \ell ( \rho\kappa_2 + \frac{\eta \kappa_1}{2c} ) + \eta ( \rho\kappa_2 + \frac{\eta \kappa_1}{2c} )^2}_{:=h_3(c)} )I \label{eq:ineq:h3}
\vspace{-6pt}
\end{align}}Combining (\ref{eq:ineq:Q323}) (\ref{eq:ineq:h1}) (\ref{eq:ineq:h2}) (\ref{eq:ineq:h3}), we have $Q_3 Q_2^{-1} Q_3^T \preceq \eta \ell B+ (h_1(c) + h_2(c)+h_3(c))I$. 

\noindent\textbf{Lower bounding $Q_1$.} It is easy to obtain,
\vspace{-6pt}
\small{\begin{align}
Q_1 \succeq 2\eta c B  - ( \underbrace{2\eta^2 \kappa_2 + \frac{\eta^2\kappa_1}{2 }}_{:=h_4(c) } )I  \label{eq:ineq:h4}
\end{align}}

\noindent\textbf{Proving $Q_1 - Q_3 Q_2^{-1} Q_3^T \succeq 0$.}
Combining the above bounds on $Q_1$ and $Q_3 Q_2^{-1} Q_3^T$, we have,
\begin{align*}
& Q_1 - Q_3 Q_2^{-1} Q_3^T \\
& \succeq (2\eta c - \eta \ell) B  - ( h_1(c) + h_2(c)+ h_3(c) + h_4(c) ) I  \\
& \succeq \big[2\eta \mu c -( \eta \ell \mu  +  h_1(c) + h_2(c)+ h_3(c) + h_4(c) )\big] I  
\end{align*}
Since $h_1(c) + h_2(c) + h_3(c) + h_4(c) $ is a strictly decreasing function in $c$, we have $Q_1 - Q_3 Q_2^{-1} Q_3^T \succeq 0$ as long as $c$ is sufficiently large.
 We can verify that our selection of $c$ is large enough s.t. $Q_1 - Q_3 Q_2^{-1} Q_3^T \succeq 0$. Due to space limit, we omit the details. Therefore, $Q  \succeq \tau P$. 

\vspace{-6pt}
  \subsection{Proof of Lemma~\ref{lem:ineq:gamma}} \label{subsec:ineq:gamma}
  Recall that $\Gamma = \text{diag}(\gamma_1,\ldots,\gamma_m)$ with each $\gamma_i \in[0,1]$. The matrix of interest is $M(\gamma_1,\ldots,\gamma_m) = \eta (\Gamma A A^T + AA^T\Gamma) + \frac{2\eta c}{\rho} (I - \Gamma ) $. 
  It is easy to check $M(\gamma_1,\ldots,\gamma_m) $ is a convex combination of $2^m$ matrices $\{M(b_1,\ldots,b_m): b_i = 0$ or $1\}$.  Therefore, to prove the lower bound for $M(\gamma_1,\ldots,\gamma_m)$, without loss of generality, we only have to prove that for $k=0,\ldots,m${\small
  \begin{align}
  M_k = M(\underbrace{1,1,\ldots,1}_{k \text{ entries}},0,\ldots,0)\succeq \frac{3}{2} \eta  AA^T \label{eq:ineq:gamma:m_lb}
  \end{align}}Notice that $M_0 = \frac{2\eta c}{\rho} I \succeq \frac{3}{2} \eta AA^T  $, $M_m = 2 \eta AA^T$, so (\ref{eq:ineq:gamma:m_lb}) is true for $k=0$ and $m$. Now assume $0<k<m$. We write $AA^T$ in block matrix form
  {\small\begin{align}
  AA^T = \left [ \begin{array}{cc}
  \Lambda_1 & \Lambda_3\\
  \Lambda_3^T & \Lambda_2
  \end{array} \right]
  \end{align}}with $\Lambda_1 \in \R^{k\times k}, \Lambda_2\in\R^{(m-k)\times (m-k)}, \Lambda_3\in\R^{(k)\times (m-k)} $. Then, we can write $M_k$ as 
  {\small\begin{align}
  M_k = \left [ \begin{array}{cc}
  2 \eta \Lambda_1 & \eta\Lambda_3\\
  \eta \Lambda_3^T &  2\frac{\eta c}{\rho} I 
  \end{array} \right]\succeq \left [ \begin{array}{cc}
  2 \eta \Lambda_1 & \eta\Lambda_3\\
  \eta \Lambda_3^T &  2 \eta \Lambda_2 
  \end{array} \right]\succeq \frac{3}{2} \eta  AA^T \nonumber
  \end{align}}where we have used the fact that $\Lambda_2 \preceq \Vert \Lambda_2 \Vert I \preceq \kappa_2 I \preceq  \frac{c}{\rho} I $ (using $c \geq \rho \kappa_2$). 
\ifthenelse{\boolean{isfullversion}}{

\subsection{Proof of Lemma~\ref{lem:B}}\label{appendix:proof_B}
\begin{proof} Define $\vv{g}(t) = \nabla f(\vv{x}^*+t(\vv{x}-\vv{x}^*))$ where $t\in [0,1]$. Then, for each $1\leq i \leq n$, we have 
	\begin{align*}
	[\frac{d}{dt} \vv{g}(t)]_i = \sum_{j=1}^n \frac{\partial^2 f(\vv{x}^*+t(\vv{x}-\vv{x}^*))}{\partial x_i \partial x_j} (x_j - x_j^*). 
	\end{align*}
	Therefore, $\frac{d}{dt}  \vv{g}(t) = \nabla^2   f(\vv{x}^*+t(\vv{x}-\vv{x}^*)) (\vv{x}-\vv{x}^*) $, and 
	\begin{align*}
	&\nabla f(\vv{x}) - \nabla f(\vv{x}^* ) = \vv{g}(1) - \vv{g}(0) = \int_0^1 \frac{d}{dt}  \vv{g}(t) dt\\
	&= \underbrace{\Big( \int_0^1 \nabla^2   f(\vv{x}^*+t(\vv{x}-\vv{x}^*))  dt \Big)}_{:= B({\vv{x}})} (\vv{x}-\vv{x}').
	\end{align*}
	Since for any $t\in[0,1]$ we have $\mu I\preceq \nabla^2   f(\vv{x}^*+t(\vv{x}-\vv{x}^*)) \preceq \ell I $, therefore $\mu I\preceq B({\vv{x}})\preceq \ell I$. 
\end{proof}

\subsection{Proof of Proposition~\ref{thm:ineq:fixedpoint}}\label{appendix:ineq:fixedpoint}
The crucial observation is that the fixed point equation $\dot{\vv{x}} = 0, \dot{\vv{\lambda}} = 0$ of Aug-PDGD (\ref{eq:ineq:pdgd_p}) (\ref{eq:ineq:pdgd_d}) is the same as the KKT condition of problem~\eqref{eq:ineq:opt_problem}. To see this, notice that $\dot{\vv{\lambda}} = 0$ can be written as  $\forall j,  \max(\rho(\vv{a}_j^T\vv{x} - b_j) + \lambda_j,0) = \lambda_j$, 
which is equivalent to
\begin{align}
\lambda_j&\geq 0 \label{eq:ineq:fixedpoint:dual_fea} \\
 \vv{a}_j^T\vv{x} - b_j&\leq 0 \label{eq:ineq:fixedpoint:primal_fea} \\
 (\vv{a}_j^T\vv{x} - b_j)\lambda_j &= 0 \label{eq:ineq:fixedpoint:com_sla}
\end{align}	
The equation $\dot{\vv{x}} = 0$ can be rewritten as
\begin{align}
0&= \nabla f(\vv{x}) + \sum_{j=1}^m \max(\rho(\vv{a}_j^T\vv{x} - b_j) + \lambda_j,0) \vv{a}_j \nonumber\\
&=\nabla f(\vv{x}) + \sum_{j=1}^m \lambda_j \vv{a}_j  = \nabla f(\vv{x}) + A^T\vv{\lambda}\label{eq:ineq:fixedpoint:stationary}
\end{align}
where the second equality is due to $\max(\rho(\vv{a}_j^T\vv{x} - b_j) + \lambda_j,0) = \lambda_j$ (implied by $\dot{\vv{\lambda}} = 0$). Equation \eqref{eq:ineq:fixedpoint:dual_fea} \eqref{eq:ineq:fixedpoint:primal_fea} \eqref{eq:ineq:fixedpoint:com_sla} \eqref{eq:ineq:fixedpoint:stationary} are precisely the KKT condition of problem~\eqref{eq:ineq:opt_problem}.

 \subsection{Proof of Lemma~\ref{lem:discretization}}\label{appendix:discretization}
%

	We will use notation $\Vert \vv{y}\Vert_P$ to denote norm $\sqrt{\vv{y}^T P\vv{y} }$. Since $F$ is $\nu$-Lipschitz continuous in the Euclidean norm, it is also $\nu'$-Lipschitz continuous in the $\Vert \cdot\Vert_P$ norm, where $\nu' = \nu \sqrt{\kappa_P}$. We fix $k$, and consider the dynamics $\dot{\vv{z}} = F (\vv{z}) $ that starts at $\vv{z}(0) = \vv{y}(k)$. The property of the Lyapunov function $V$ implies that $\Vert \vv{z}(t) - \vv{z}^* \Vert_P \leq e^{-\frac{\tau t}{2}} \Vert \vv{z}(0) - \vv{z}^*\Vert_P$. Then we have for any $t>0$,
\begin{align}
\Vert \vv{z}(t) - \vv{z}(0)\Vert_P &= \Vert \int_{\xi =0}^t F(\vv{z}(\xi)) - F(\vv{z}^* ) d\xi\Vert_P \nonumber\\
&\leq \nu' \int_{\xi =0}^t \Vert \vv{z}(\xi) - \vv{z}^*\Vert_P d\xi\nonumber\\
&\leq\nu'  \int_{\xi =0}^t \Vert \vv{z}(0) - \vv{z}^*\Vert_P d\xi \nonumber \\
& = \nu' t \Vert \vv{z}(0) - \vv{z}^*\Vert_P \label{eq:discrete:ztz0}
\end{align}
Next, we bound
\begin{align*}
\Vert \vv{z}(\delta) - \vv{y}(k+1)\Vert_P&= \Vert \vv{z}(\delta) - \vv{z}(0)- (\vv{y}(k+1) -\vv{y}(k))\Vert_P \\
&=\Vert \int_{t=0}^\delta F(\vv{z}(t)) - F(\vv{z}(0))  dt\Vert_P \\
&\leq  \nu' \int_{t=0}^\delta\Vert  \vv{z}(t) -  \vv{z}(0) \Vert_P  dt \\
&\leq \nu'^2 \int_{t=0}^\delta t \Vert  \vv{z}(0) -  \vv{z}^* \Vert_P  dt \\
&= \frac{\nu'^2 \delta^2}{2}\| \vv{y}(k)-  \vv{z}^*\|_P
\end{align*}
where in the last inequality we have used (\ref{eq:discrete:ztz0}). At last, we have 
{\small\begin{align*}
\Vert \vv{y}(k+1) - \vv{z}^* \Vert_P& \leq \Vert \vv{y}(k+1) - \vv{z}(\delta)\Vert_P + \Vert\vv{z}(\delta) - \vv{z}^*\Vert_P\\
&\leq \frac{\nu'^2 \delta^2}{2}\| \vv{y}(k)-  \vv{z}^*\|_P + e^{-\frac{\tau \delta}{2}}\Vert \vv{z}(0) - \vv{z}^*\Vert_P \\
&= (e^{-\frac{\tau \delta}{2}} + \frac{\nu'^2 \delta^2}{2} )\| \vv{y}(k)-  \vv{z}^*\|_P \\
&\leq (e^{-\frac{\tau \delta}{2}} + \frac{\nu'^2 \delta^2}{2} )^{k+1}\| \vv{y}(0)-  \vv{z}^*\|_P.
\end{align*}}The final claim of the lemma follows from the fact that quantity $e^{-\frac{\tau \delta}{2}} + \frac{\nu'^2 \delta^2}{2}$ as a function of $\delta$, equals $1$ when $\delta = 0$, and has negative derivative w.r.t. $\delta$ at $\delta = 0$. 
\subsection{Two Sided Inequality Constraints}\label{appendix:twosided}
In this section, we study an extended version of the inequality constrained problem in Section~\ref{subsec:ineq:problem_formulation}, where the one-sided constraints are replaced with two sided constraints. The notations in this section is independent from the rest of the paper, though we use the same letter as the rest of the paper to represent the same type of quantity. 

 We consider the following problem with two-sided inequality constraints,
\begin{align}
	\min_{\vv{x}\in \R^n} \quad & f(\vv{x}) \label{eq:twosided:opt_problem} \\
	\text{s.t.} \quad \bar{\vv{b}} &  \leq A\vv{x} \leq \underline{\vv{b}} \nonumber
\end{align}
where $A\in\R^{m\times n}$, and $\bar{\vv{b}} = [\bar{b}_1,\ldots,\bar{b}_m]^T$, $\underline{\vv{b}} = [\underline{b}_1,\ldots,\underline{b}_m]^T\in \R^m$. We still use Assumption~\ref{assump:f}, \ref{assump:A}, that is, $f$ is $\mu$ strongly convex and $\ell$-smooth, $A$ is full row rank and  $\kappa_1 I \preceq  AA^T\preceq \kappa_2 I $. We further assume without loss of generality that $\underline{b}_j < \bar{b}_j$, for $j=1,\ldots,m$. 

The algorithms, theorems and analysis in this section basically follow the same line as the one-sided constraint case in Section~\ref{subsec:ineq:problem_formulation} and \ref{subsec:ineq}. The major difference is that we use a slightly different penalty function when formalizing the Augmented Lagrangian. The Augmented Lagrangian is given by,
\begin{align}
L(\vv{x},\vv{\lambda}) = f(\vv{x}) + \sum_{j=1}^m H_\rho(\vv{a}_j^T\vv{x}, \underline{b}_j,\bar{b}_j,\lambda_j) \label{eq:twosided:Lagrangian}
\end{align}
where $\rho>0$ is a free parameter, and the penalty function $H_\rho$ is given as follows,
\begin{align*}
H_\rho(\vv{a}_j^T \vv{x},  \underline{b}_j,\bar{b}_j , \lambda_j) 
&= \left\{ \begin{array}{ll}
(\vv{a}_j^T \vv{x} - \underline{b}_j ) \lambda_j     + \frac{\rho}{2}(\vv{a}_j^T \vv{x} -\underline{b}_j  )^2  & \text{if } \rho(\vv{a}_j^T \vv{x}) + \lambda_j <  \rho \underline{b}_j\\
(\vv{a}_j^T \vv{x} - \bar{b}_j )\lambda_j    + \frac{\rho}{2} (\vv{a}_j^T\vv{x} - \bar{b}_j)^2 & \text{if } \rho(\vv{a}_j^T\vv{x} ) + \lambda_j >\rho \bar{b}_j  \\
-\frac{1}{2} \frac{\lambda_j^2}{\rho}& \text{otherwise. } \end{array}\right.
\end{align*}
And the derivative of the penalty function w.r.t. $\vv{x}$ is given by,
\begin{align*}
\nabla_{\vv{x}} H_\rho(\vv{a}_j^T \vv{x},  \underline{b}_j,\bar{b}_j , \lambda_j)  &= \left\{ \begin{array}{ll}
    (  \rho (\vv{a}_j^T \vv{x} -\underline{b}_j  ) +\lambda_j)\vv{a}_j & \text{if } \rho(\vv{a}_j^T \vv{x}) + \lambda_j < \rho  \underline{b}_j\\
      (\rho (\vv{a}_j^T\vv{x} - \bar{b}_j)  + \lambda_j ) \vv{a}_j& \text{if } \rho(\vv{a}_j^T\vv{x} ) + \lambda_j > \rho \bar{b}_j \\
\vv{0} & \text{otherwise. } \end{array}\right. \\
 &= S_{\rho\underline{b}_j}^{\rho \bar{b}_j }(\rho (\vv{a}_j^T \vv{x} ) +\lambda_j  ) \vv{a}_j
\end{align*}
where for $\underline{\alpha}< \bar{\alpha}$, $S_{\underline{\alpha}}^{\bar{\alpha}}: \R\rightarrow \R$ is the soft thresholding function, defined as
$$S_{\underline{\alpha}}^{\bar{\alpha}} (y) =  \max [ \min( y - \underline{\alpha},0), y - \bar{\alpha} ] . $$
Similarly, we can calculate the derivative w.r.t. $\vv{\lambda}$,
\begin{align*}
\nabla_{\vv{\lambda}} H_\rho(\vv{a}_j^T \vv{x},  \underline{b}_j,\bar{b}_j , \lambda_j) 
&= \left\{ \begin{array}{ll}
   (\vv{a}_j^T \vv{x} -\underline{b}_j   )\vv{e}_j & \text{if } \rho(\vv{a}_j^T \vv{x}) + \lambda_j < \rho  \underline{b}_j\\
  (\vv{a}_j^T\vv{x} - \bar{b}_j)     \vv{e}_j& \text{if } \rho(\vv{a}_j^T\vv{x} ) + \lambda_j > \rho \bar{b}_j \\
-\frac{1}{\rho} \lambda_j \vv{e}_j & \text{otherwise. } \end{array}\right. \\
&=\frac{ S_{\rho\underline{b}_j}^{\rho \bar{b}_j }(\rho (\vv{a}_j^T \vv{x} ) +\lambda_j  ) - \lambda_j }{\rho }\vv{e}_j.
\end{align*}
We can then write down the primal dual gradient dynamics for \eqref{eq:twosided:Lagrangian} as follows,
{\begin{subeqnarray} \label{eq:twosided:pdgd}
\dot{\vv{x}} & = &- \nabla_{\vv{x}} L(\vv{x},\vv{\lambda})  \nonumber \\
&=& - \nabla f(\vv{x}) - \sum_{j=1}^m S_{\rho\underline{b}_j}^{\rho \bar{b}_j }(\rho (\vv{a}_j^T \vv{x} ) +\lambda_j ) \vv{a}_j   \slabel{eq:twosided:pdgd_p}\\
\dot{\vv{\lambda}} &=& \eta \nabla_{\vv{\lambda}} L(\vv{x},\vv{\lambda})  \nonumber \\
&=& \eta \sum_{j=1}^m \frac{ S_{\rho\underline{b}_j}^{\rho \bar{b}_j }(\rho (\vv{a}_j^T \vv{x} ) +\lambda_j ) - \lambda_j }{\rho }\vv{e}_j \slabel{eq:twosided:pdgd_d}
\end{subeqnarray}}We name the dynamics \eqref{eq:twosided:pdgd} as Aug-PDGD-TS (Augmented Primal Dual Gradient Dynamics for Two Sided inequality constraints). Again, we have the following the lemma regarding the fixed point of the Aug-PDGD-TS.
\begin{theorem}
	The Aug-PDGD-TS (\ref{eq:twosided:pdgd}) has a unique fixed point, $(\vv{x}^*,\vv{\lambda}^*)$. Moreover, let $\bar{\vv{\lambda}} $ be a vector such that the $j$'th entry being $\max(\lambda_i^*,0)$, and $\underline{\vv{\lambda}}$ be a vector such that the $j$'th entry being $-\min(\lambda_i^*,0)$. Then, $(\vv{x}^*, \bar{\vv{\lambda}},\underline{\vv{\lambda}})$ satisfies the KKT-condition of problem~\eqref{eq:twosided:opt_problem}.
\end{theorem}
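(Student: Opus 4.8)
The plan is to follow the same route as Appendix~\ref{appendix:ineq:fixedpoint}: show that the equilibrium equations $\dot{\vv{x}}=0$, $\dot{\vv{\lambda}}=0$ of Aug-PDGD-TS \eqref{eq:twosided:pdgd}, once the multiplier is split as $\bar{\lambda}_j=\max(\lambda_j,0)$ and $\underline{\lambda}_j=-\min(\lambda_j,0)$, coincide with the KKT system of \eqref{eq:twosided:opt_problem}; existence and uniqueness then follow from $\mu$-strong convexity of $f$ together with Assumption~\ref{assump:A}.

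First I would record the explicit form of the soft-thresholding map. From $S_{\underline{\alpha}}^{\bar{\alpha}}(y)=\max[\min(y-\underline{\alpha},0),y-\bar{\alpha}]$ and $\underline{\alpha}<\bar{\alpha}$ one reads off that $S_{\underline{\alpha}}^{\bar{\alpha}}(y)=y-\underline{\alpha}$ when $y<\underline{\alpha}$, $S_{\underline{\alpha}}^{\bar{\alpha}}(y)=0$ when $\underline{\alpha}\le y\le\bar{\alpha}$, and $S_{\underline{\alpha}}^{\bar{\alpha}}(y)=y-\bar{\alpha}$ when $y>\bar{\alpha}$. The equation $\dot{\lambda}_j=0$ then reads $S_{\rho\underline{b}_j}^{\rho\bar{b}_j}(\rho\vv{a}_j^T\vv{x}+\lambda_j)=\lambda_j$, and a three-way case analysis on where $\rho\vv{a}_j^T\vv{x}+\lambda_j$ sits relative to $\rho\underline{b}_j$ and $\rho\bar{b}_j$ shows this holds if and only if $\underline{b}_j\le\vv{a}_j^T\vv{x}\le\bar{b}_j$ together with the implications ``$\lambda_j>0\Rightarrow\vv{a}_j^T\vv{x}=\bar{b}_j$'' and ``$\lambda_j<0\Rightarrow\vv{a}_j^T\vv{x}=\underline{b}_j$''. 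Written in terms of $\bar{\lambda}_j,\underline{\lambda}_j$ this is exactly primal feasibility, dual feasibility $\bar{\lambda}_j,\underline{\lambda}_j\ge0$, and the complementary slackness conditions $\bar{\lambda}_j(\vv{a}_j^T\vv{x}-\bar{b}_j)=0$, $\underline{\lambda}_j(\underline{b}_j-\vv{a}_j^T\vv{x})=0$ for \eqref{eq:twosided:opt_problem}; note $\underline{b}_j<\bar{b}_j$ forces at most one of $\bar{\lambda}_j,\underline{\lambda}_j$ to be nonzero, so the splitting is consistent. Since $\dot{\vv{\lambda}}=0$ yields $S_{\rho\underline{b}_j}^{\rho\bar{b}_j}(\cdot)=\lambda_j$ for every $j$, the equation $\dot{\vv{x}}=0$ in \eqref{eq:twosided:pdgd_p} collapses to $\nabla f(\vv{x})+\sum_{j}\lambda_j\vv{a}_j=\nabla f(\vv{x})+A^T(\bar{\vv{\lambda}}-\underline{\vv{\lambda}})=0$, the stationarity condition. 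Hence every equilibrium $(\vv{x}^*,\vv{\lambda}^*)$ yields a KKT triple $(\vv{x}^*,\bar{\vv{\lambda}},\underline{\vv{\lambda}})$.

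For existence, the feasible set of \eqref{eq:twosided:opt_problem} is nonempty because $A$ is surjective by Assumption~\ref{assump:A}, so strong convexity gives a unique minimizer $\vv{x}^*$; since $\underline{b}_j<\bar{b}_j$, the index sets whose lower and upper bounds are active at $\vv{x}^*$ are disjoint, so the active constraint gradients form a subfamily of the rows of $A$ and are linearly independent (LICQ holds), whence KKT multipliers $\bar{\vv{\lambda}},\underline{\vv{\lambda}}$ exist; reversing the computation above shows $(\vv{x}^*,\bar{\vv{\lambda}}-\underline{\vv{\lambda}})$ is an equilibrium. For uniqueness, any equilibrium gives by the previous step a KKT triple, so its $\vv{x}$-component is a minimizer of the convex problem and hence equals $\vv{x}^*$; then $A^T\vv{\lambda}=-\nabla f(\vv{x}^*)$ and injectivity of $A^T$ (full row rank of $A$) determine $\vv{\lambda}$ uniquely, giving the unique equilibrium $(\vv{x}^*,\vv{\lambda}^*)$ and the claimed KKT property. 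The only genuinely computational step is the three-case verification that $\dot{\lambda}_j=0$ is equivalent to the feasibility-plus-complementary-slackness pair; the rest is bookkeeping. The point needing care is that the splitting $\lambda_j^*=\bar{\lambda}_j-\underline{\lambda}_j$ be compatible with complementary slackness, which rests crucially on the standing assumption $\underline{b}_j<\bar{b}_j$ (a constraint cannot be active at both endpoints), and this same observation is what makes the KKT multipliers, hence the triple $(\vv{x}^*,\bar{\vv{\lambda}},\underline{\vv{\lambda}})$, unique.
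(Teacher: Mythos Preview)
Your proposal is correct and follows essentially the same route as the paper: both reduce the fixed-point equations of \eqref{eq:twosided:pdgd} to the KKT system of \eqref{eq:twosided:opt_problem} by the identical three-case analysis of $S_{\rho\underline{b}_j}^{\rho\bar{b}_j}(\rho\vv{a}_j^T\vv{x}+\lambda_j)=\lambda_j$, and both then substitute this into $\dot{\vv{x}}=0$ to recover stationarity. Your write-up is in fact slightly more complete than the paper's: you spell out existence (feasibility from surjectivity of $A$, unique minimizer from strong convexity, LICQ from disjointness of the active upper/lower index sets) and uniqueness (injectivity of $A^T$ pins down $\vv{\lambda}^*$), whereas the paper only asserts the one-to-one correspondence with KKT solutions and leaves their existence/uniqueness implicit.
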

\begin{proof}
	The crucial observation is that, equation $\dot{\vv{\lambda}} = 0$, can be written as $\forall j, \lambda_j = S_{\rho\underline{b}_j}^{\rho \bar{b}_j }(\rho (\vv{a}_j^T \vv{x} ) + \lambda_j)$, and is equivalent to, 
	\begin{align}
		\vv{a}_j^T \vv{x}&\leq \bar{b}_j \label{eq:twosided:fixedpoint:pf_1}\\
		\vv{a}_j^T \vv{x} &\geq \underline{b}_j \label{eq:twosided:fixedpoint:pf_2}\\
		(\vv{a}_j^T \vv{x} - \underline{b}_j )\min( \lambda_j, 0) &=0 \label{eq:twosided:fixedpoint:cs_1} \\
		(\vv{a}_j^T \vv{x} - \bar{b}_j )\max(\lambda_j,0)& = 0. \label{eq:twosided:fixedpoint:cs_2}
	\end{align}
	Further, the equation $\dot{\vv{x}} = 0$ can be rewritten as
	\begin{align}
	0 &= \nabla f(\vv{x}) +  \sum_{j=1}^m S_{\rho\underline{b}_j}^{\rho \bar{b}_j }(\rho (\vv{a}_j^T \vv{x} ) +\lambda_j ) \vv{a}_j \nonumber \\
	&= \nabla f(\vv{x}) +  \sum_{j=1}^m \lambda_j \vv{a}_j  \nonumber \\
	&= \nabla f(\vv{x}) +  \sum_{j=1}^m \max(\lambda_j,0) \vv{a}_j + \sum_{j=1}^m (- \min(\lambda_j,0) ) (-\vv{a}_j) \label{eq:twosided:fixedpoint:stationary}
	\end{align}
 If we replace $\max(\lambda_j,0)$ with $\bar{\lambda}_j$, and replace $-\min(\lambda_j,0)$ with $\underline{\lambda}_j$, then \eqref{eq:twosided:fixedpoint:pf_1} \eqref{eq:twosided:fixedpoint:pf_2} \eqref{eq:twosided:fixedpoint:cs_1} \eqref{eq:twosided:fixedpoint:cs_2} \eqref{eq:twosided:fixedpoint:stationary}, together with $\bar{\lambda}_j\geq 0, \underline{\lambda}_j\geq 0$, are precisely the KKT conditions of problem~\eqref{eq:twosided:opt_problem}. Finally, notice that the mapping $\R\rightarrow [0,\infty)\times [0,\infty), \lambda\mapsto (\max(\lambda,0),-\min(\lambda,0) )$ is a bijection. So there is a one-to-one correspondence between the fixed point of the Aug-PDGD-TS (\ref{eq:twosided:pdgd}) and the solution of the KKT condition of \eqref{eq:twosided:opt_problem}. 
\end{proof}
Then, we state the following exponential stability result, whose proof is deferred to Appendix-\ref{appendix:twosided:proof_stability}. 
\begin{theorem}\label{thm:twosided:stablility}
	Under Assumption~\ref{assump:f} and \ref{assump:A}, the Aug-PDGD-TS (\ref{eq:twosided:pdgd}) is exponential stable in the sense that for any $\eta>0$, $\rho>0$, there exists constant {\small$$\tau_{ineq,ts} = \frac{\eta\kappa_1^2}{40 \ell \kappa_2 [ \max(\frac{\rho \kappa_2}{\mu}, \frac{\ell}{\mu})]^2 [\max(\frac{\eta}{\ell \rho}, \frac{\ell}{\mu})]^2 }$$}and constants $C_3,C_4>0$ which depend on $\eta,\rho,\kappa_1,\kappa_2,\mu,\ell$, $\Vert \vv{x}(0) - \vv{x}^*\Vert$, $\Vert \vv{\lambda}(0) - \vv{\lambda}^* \|$, s.t. $\Vert \vv{x}(t) - \vv{x}^*\Vert\leq C_3 e^{-\tau_{ineq,ts} t}$, $\Vert \vv{\lambda}(t) - \vv{\lambda}^*\Vert\leq C_4e^{-\tau_{ineq,ts} t}$.
\end{theorem}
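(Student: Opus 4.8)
The plan is to mirror the proof strategy already used for Theorem~\ref{thm:ineq} in Section~\ref{subsec:ineq}, since the two-sided case is structurally identical once the max/soft-thresholding nonlinearity is linearized. First I would stack $\vv{z} = [\vv{x}^T,\vv{\lambda}^T]^T$, let $\vv{z}^*$ be the unique equilibrium, and use the \emph{same} Lyapunov function $V(\vv{z}) = (\vv{z}-\vv{z}^*)^T P(\vv{z}-\vv{z}^*)$ with $P$ as in \eqref{eq:ineq:P} and the same constant $c = 20\ell[\max(\frac{\rho\kappa_2}{\mu},\frac{\ell}{\mu})]^2[\max(\frac{\eta}{\ell\rho},\frac{\ell}{\mu})]^2\frac{\kappa_2}{\kappa_1}$. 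The goal reduces, exactly as before, to showing $\frac{d}{dt}V(\vv{z}) \leq -\tau V(\vv{z})$ with $\tau = \frac{\eta\kappa_1}{2c} = \tau_{ineq,ts}$ along trajectories; this immediately yields the claimed exponential decay of $\Vert\vv{x}(t)-\vv{x}^*\Vert$ and $\Vert\vv{\lambda}(t)-\vv{\lambda}^*\Vert$.

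The key linearization step: I would prove a two-sided analogue of Lemma~\ref{lem:ineq:gamma_def}. The elementary fact needed is that for any $\underline{\alpha}<\bar{\alpha}$ the soft-thresholding map $S_{\underline{\alpha}}^{\bar{\alpha}}$ is $1$-Lipschitz and monotone nondecreasing, so for any $y,y^*\in\R$ there is $\gamma\in[0,1]$ (depending on $y,y^*$) with $S_{\rho\underline{b}_j}^{\rho\bar{b}_j}(y) - S_{\rho\underline{b}_j}^{\rho\bar{b}_j}(y^*) = \gamma(y-y^*)$; take $\gamma$ as the difference quotient when $y\neq y^*$ and $\gamma = 0$ otherwise. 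Applying this with $y = \rho\vv{a}_j^T\vv{x} + \lambda_j$ and $y^* = \rho\vv{a}_j^T\vv{x}^* + \lambda_j^*$, and using $S_{\rho\underline{b}_j}^{\rho\bar{b}_j}(\rho\vv{a}_j^T\vv{x}^*+\lambda_j^*) = \lambda_j^*$ at the equilibrium, I get for each $j$ a scalar $\gamma_j(\vv{z})\in[0,1]$ such that the increments of $\nabla_{\vv{x}}H_\rho$ and $\nabla_{\vv{\lambda}}H_\rho$ take precisely the same affine form as in Lemma~\ref{lem:ineq:gamma_def} — namely $\gamma_j\rho\vv{a}_j^T(\vv{x}-\vv{x}^*)\vv{a}_j + \gamma_j(\lambda_j-\lambda_j^*)\vv{a}_j$ and $\gamma_j\vv{a}_j^T(\vv{x}-\vv{x}^*)\vv{e}_j + \frac{1}{\rho}(\gamma_j-1)(\lambda_j-\lambda_j^*)\vv{e}_j$. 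Setting $\Gamma(\vv{z}) = \mathrm{diag}(\gamma_1(\vv{z}),\dots,\gamma_m(\vv{z}))$ and combining with Lemma~\ref{lem:B} for the $\nabla f$ term, the Aug-PDGD-TS dynamics \eqref{eq:twosided:pdgd} rewrites in exactly the same ``linear'' form $\dot{\vv{z}} = G(\vv{z})(\vv{z}-\vv{z}^*)$ with
\begin{align*}
G(\vv{z}) = \left[\begin{array}{cc} -B(\vv{x}) - \rho A^T\Gamma(\vv{z})A & -A^T\Gamma(\vv{z}) \\ \eta\Gamma(\vv{z})A & \frac{\eta}{\rho}(\Gamma(\vv{z})-I) \end{array}\right]
\end{align*}
the very matrix appearing in \eqref{eq:ineq:linear}.

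Once this reduction is in place, the proof is finished by invoking Lemma~\ref{lem:ineq:qp} verbatim: for any symmetric $B$ with $\mu I\preceq B\preceq\ell I$ and any diagonal $\Gamma$ with entries in $[0,1]$, we already have $G(\vv{z})^TP + PG(\vv{z})\preceq -\tau P$, hence $\frac{d}{dt}V(\vv{z}) = (\vv{z}-\vv{z}^*)^T(G^TP+PG)(\vv{z}-\vv{z}^*) \leq -\tau V(\vv{z})$, which gives the exponential bounds. I expect the main (and essentially only) obstacle to be verifying carefully that the soft-thresholding increment really does collapse to a \emph{single} scalar multiple $\gamma_j$ multiplying the whole increment $\rho\vv{a}_j^T(\vv{x}-\vv{x}^*) + (\lambda_j-\lambda_j^*)$ of its scalar argument — i.e. that both the $\vv{x}$-gradient and the $\vv{\lambda}$-gradient of $H_\rho$ depend on $(\vv{x},\lambda_j)$ only through that same argument $\rho\vv{a}_j^T\vv{x}+\lambda_j$ (they do, by inspection of the piecewise formulas, since each branch of $\nabla_{\vv{x}}H_\rho$ is $S_{\rho\underline{b}_j}^{\rho\bar{b}_j}(\rho\vv{a}_j^T\vv{x}+\lambda_j)\vv{a}_j$ and each branch of $\rho\nabla_{\vv{\lambda}}H_\rho$ is $(S_{\rho\underline{b}_j}^{\rho\bar{b}_j}(\rho\vv{a}_j^T\vv{x}+\lambda_j)-\lambda_j)\vv{e}_j$); after that the rest is a transcription of Section~\ref{subsec:ineq}.
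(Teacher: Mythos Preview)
Your proposal is correct and follows essentially the same approach as the paper: linearize the soft-thresholding via a $\gamma_j(\vv{z})\in[0,1]$ (the two-sided analogue of Lemma~\ref{lem:ineq:gamma_def}, using monotonicity and $1$-Lipschitzness of $S_{\underline{\alpha}}^{\bar{\alpha}}$), obtain the same $G(\vv{z})$ as in \eqref{eq:ineq:linear}, and then invoke Lemma~\ref{lem:ineq:qp} verbatim since its proof only uses that $\Gamma$ is diagonal with entries in $[0,1]$. The one extraneous remark is the equilibrium identity $S_{\rho\underline{b}_j}^{\rho\bar{b}_j}(\rho\vv{a}_j^T\vv{x}^*+\lambda_j^*)=\lambda_j^*$, which you do not actually need for the increment formulas (they follow from the difference-quotient identity alone).
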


\subsection{Proof of Theorem~\ref{thm:twosided:stablility}} \label{appendix:twosided:proof_stability}
Our proof strategy is similar as the one-sided constraint case in Section~\ref{subsec:ineq}. Let $\vv{z} = [  \vv{x}^T,\vv{\lambda} ^T ]^T$ and define $\vv{z}^*$ similarly. We will first write the Aug-PDGD-TS in a ``linear'' form, $\frac{d}{dt} {\vv{z}} = G(\vv{z}) (\vv{z} - \vv{z}^*)$ in (\ref{eq:twosided:linear}), where the state transition matrix $G(\vv{z})$ depends on the state $\vv{z}$. To do this, we rely on the following Lemma.
\begin{lemma}\label{lem:twosided:gamma}
	We have that, for any $j$ and state $\vv{z}$, there exists $\gamma_j(\vv{z})\in[0,1]$ that depends on $\vv{z} $, s.t.
	\begin{align*}
	 \nabla_{\vv{x}} H_\rho(\vv{a}_j^T \vv{x},  \underline{b}_j,\bar{b}_j , \lambda_j) - \nabla_{\vv{x}} H_\rho(\vv{a}_j^T \vv{x}^*,  \underline{b}_j,\bar{b}_j , \lambda_j^*) 
	&= \gamma_j(\vv{z}) \rho \vv{a}_j^T (\vv{x} - \vv{x}^* ) \vv{a}_j + \gamma_j (\vv{z})(\lambda_j - \lambda_j^*) \vv{a}_j \\
	\nabla_{\vv{\lambda}} H_\rho(\vv{a}_j^T \vv{x},  \underline{b}_j,\bar{b}_j , \lambda_j) - \nabla_{\vv{\lambda}} H_\rho(\vv{a}_j^T \vv{x}^*,  \underline{b}_j,\bar{b}_j , \lambda_j^*) 
	&= \gamma_j(\vv{z}) \vv{a}_j^T (\vv{x} - \vv{x}^*)  \vv{e}_j + \frac{1}{\rho}(\gamma_j(\vv{z}) - 1) (\lambda_j - \lambda_j^*) \vv{e}_j
	\end{align*} 
\end{lemma}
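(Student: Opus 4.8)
The plan is to mimic the proof of Lemma~\ref{lem:ineq:gamma_def}, reducing everything to a scalar statement about the soft-thresholding function $S_{\underline\alpha}^{\bar\alpha}$. Recall that $\nabla_{\vv{x}} H_\rho(\vv{a}_j^T \vv{x}, \underline{b}_j,\bar{b}_j , \lambda_j) = S_{\rho\underline{b}_j}^{\rho \bar{b}_j }(\rho \vv{a}_j^T \vv{x} +\lambda_j ) \vv{a}_j$ and $\nabla_{\vv{\lambda}} H_\rho(\cdots) = \frac{1}{\rho}\big( S_{\rho\underline{b}_j}^{\rho \bar{b}_j }(\rho \vv{a}_j^T \vv{x} +\lambda_j ) - \lambda_j \big)\vv{e}_j$. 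So if I set $y = \rho \vv{a}_j^T\vv{x} + \lambda_j$ and $y^* = \rho \vv{a}_j^T\vv{x}^* + \lambda_j^*$, both gradient differences are expressed through the single quantity $S_{\rho\underline b_j}^{\rho\bar b_j}(y) - S_{\rho\underline b_j}^{\rho\bar b_j}(y^*)$, together with $\lambda_j - \lambda_j^*$ in the $\vv{\lambda}$-component.

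First I would establish the scalar claim: for fixed thresholds $\underline\alpha < \bar\alpha$ and any $y, y^*\in\R$, there exists $\gamma \in [0,1]$ (depending on $y, y^*$) such that $S_{\underline\alpha}^{\bar\alpha}(y) - S_{\underline\alpha}^{\bar\alpha}(y^*) = \gamma\,(y - y^*)$. This holds because $S_{\underline\alpha}^{\bar\alpha}$ is $1$-Lipschitz and monotone nondecreasing: if $y \neq y^*$, set $\gamma = \big(S_{\underline\alpha}^{\bar\alpha}(y) - S_{\underline\alpha}^{\bar\alpha}(y^*)\big)/(y - y^*)$, which lies in $[0,1]$ by monotonicity and the Lipschitz bound; if $y = y^*$, set $\gamma = 0$. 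This is exactly the analogue of the one-line argument used in the proof of Lemma~\ref{lem:ineq:gamma_def}, just with $\max(\cdot,0)$ replaced by the soft-thresholding function.

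Second I would apply this with $\alpha$-parameters $\rho\underline b_j, \rho\bar b_j$ and the points $y = \rho\vv{a}_j^T\vv{x} + \lambda_j$, $y^* = \rho\vv{a}_j^T\vv{x}^* + \lambda_j^*$, obtaining $\gamma_j(\vv{z})\in[0,1]$ with $S_{\rho\underline b_j}^{\rho\bar b_j}(y) - S_{\rho\underline b_j}^{\rho\bar b_j}(y^*) = \gamma_j(\vv{z})\big(\rho\vv{a}_j^T(\vv{x}-\vv{x}^*) + (\lambda_j - \lambda_j^*)\big)$. Substituting into the $\vv{x}$-gradient difference immediately gives $\gamma_j(\vv{z})\rho\vv{a}_j^T(\vv{x}-\vv{x}^*)\vv{a}_j + \gamma_j(\vv{z})(\lambda_j-\lambda_j^*)\vv{a}_j$, the first asserted identity. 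For the $\vv{\lambda}$-component, the gradient difference is $\frac{1}{\rho}\big[(S(y) - S(y^*)) - (\lambda_j - \lambda_j^*)\big]\vv{e}_j = \frac{1}{\rho}\big[\gamma_j(\vv{z})\rho\vv{a}_j^T(\vv{x}-\vv{x}^*) + \gamma_j(\vv{z})(\lambda_j-\lambda_j^*) - (\lambda_j-\lambda_j^*)\big]\vv{e}_j = \gamma_j(\vv{z})\vv{a}_j^T(\vv{x}-\vv{x}^*)\vv{e}_j + \frac{1}{\rho}(\gamma_j(\vv{z})-1)(\lambda_j-\lambda_j^*)\vv{e}_j$, which is the second identity. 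There is really no main obstacle here; the only thing to be slightly careful about is checking that the same $\gamma_j(\vv{z})$ works simultaneously for both components — which it does, precisely because both components are built from the single scalar increment $S(y) - S(y^*)$ — and verifying the monotonicity/Lipschitz property of $S_{\underline\alpha}^{\bar\alpha}$ from its explicit piecewise-linear formula $S_{\underline\alpha}^{\bar\alpha}(y) = \max[\min(y-\underline\alpha,0),\, y-\bar\alpha]$.
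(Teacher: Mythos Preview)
Your proposal is correct and follows essentially the same approach as the paper: both reduce the claim to the scalar fact that the soft-thresholding function $S_{\underline\alpha}^{\bar\alpha}$ is monotone and $1$-Lipschitz, so $S_{\underline\alpha}^{\bar\alpha}(y)-S_{\underline\alpha}^{\bar\alpha}(y^*)=\gamma(y-y^*)$ for some $\gamma\in[0,1]$, and then substitute into the gradient formulas. Your write-up is in fact more explicit than the paper's, spelling out the substitution for both the $\vv{x}$- and $\vv{\lambda}$-components and noting that the same $\gamma_j(\vv{z})$ works for both.
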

\begin{proof}
	Observe for any $\underline{\alpha}< \bar{\alpha}$, $S_{\underline{\alpha}}^{\bar{\alpha}}(\cdot)$ is an increasing function and $1$-Lipschitz continuous. Therefore, for any $y,y^*\in\R$, there exists some $\gamma\in[0,1]$ which depends on $y$, $y^*$, s.t. 
	$$S_{\underline{\alpha}}^{\bar{\alpha}} (y) - S_{\underline{\alpha}}^{\bar{\alpha}} (y^*) = \gamma (y - y^*) .$$
	The statement of the lemma directly follows from the above result. 
\end{proof}
Also using Lemma~\ref{lem:B}, we have $\nabla f(\vv{x}) - \nabla f(\vv{x}^*) = B (\vv{x}) (\vv{x} - \vv{x}^*)$ where $\mu I\preceq B(\vv{x})\preceq \ell I$. For any $\vv{z}$, we also define $\Gamma(\vv{z}) = \text{diag}(\gamma_1(\vv{z}),\ldots,\gamma_m(\vv{z}))$ where $\gamma_j(\vv{z})$ is from Lemma~\ref{lem:twosided:gamma}. Then, we rewrite (\ref{eq:twosided:pdgd_p}) as
\begin{align*}
\dot{\vv{x}} &= - (\nabla_{\vv{x} } L(\vv{x},\vv{\lambda}) - \nabla_{\vv{x} } L(\vv{x}^*,\vv{\lambda}^*)  )\\
&= - (\nabla f(\vv{x} ) - \nabla f(\vv{x}^*))    - \sum_{j=1}^m ( \nabla_{\vv{x}} H_\rho(\vv{a}_j^T \vv{x},  \underline{b}_j,\bar{b}_j , \lambda_j) - \nabla_{\vv{x}} H_\rho(\vv{a}_j^T \vv{x}^*,  \underline{b}_j,\bar{b}_j , \lambda_j^*) ) \\
&= - B({\vv{x}}) (\vv{x} - \vv{x}^*)  - \sum_{j=1}^m \Big[ \gamma_j(\vv{z}) \rho \vv{a}_j^T (\vv{x} - \vv{x}^* ) \vv{a}_j + \gamma_j (\vv{z})(\lambda_j - \lambda_j^*) \vv{a}_j  \Big] \\
&= - B({\vv{x}}) (\vv{x} - \vv{x}^*)  -\rho \Big[  \sum_{j=1}^m \gamma_j(\vv{z}) \vv{a}_j  \vv{a}_j^T\Big] (\vv{x} - \vv{x}^* ) -  A^T \Gamma(\vv{z}) (\vv{\lambda} - \vv{\lambda}^*) \\
& = - B({\vv{x}}) (\vv{x} - \vv{x}^*)  -\rho A^T\Gamma(\vv{z}) A (\vv{x} - \vv{x}^* ) -  A^T \Gamma(\vv{z}) (\vv{\lambda} - \vv{\lambda}^*) 
\end{align*}
Similarly, we rewrite (\ref{eq:twosided:pdgd_d}), as 
  \begin{align*}
  \dot{\vv{\lambda}} &= \eta (\nabla_{\vv{\lambda} } L(\vv{x},\vv{\lambda}) - \nabla_{\vv{\lambda} } L(\vv{x}^*,\vv{\lambda}^*)  )\\
  &= \eta \sum_{j=1}^m ( \nabla_{\vv{\lambda}} H_\rho(\vv{a}_j^T \vv{x},  \underline{b}_j,\bar{b}_j , \lambda_j) - \nabla_{\vv{\lambda}} H_\rho(\vv{a}_j^T \vv{x}^*,  \underline{b}_j,\bar{b}_j , \lambda_j^*) ) \\
  &= \eta \sum_{j=1}^m \Big[ \gamma_j (\vv{z})\vv{a}_j^T (\vv{x} - \vv{x}^*)  \vv{e}_j + \frac{1}{\rho}(\gamma_j (\vv{z})- 1) (\lambda_j - \lambda_j^*) \vv{e}_j \Big] \\
  &= \eta \Gamma(\vv{z}) A (\vv{x} - \vv{x}^*) + \frac{\eta}{\rho} (\Gamma(\vv{z}) - I) (\vv{\lambda} - \vv{\lambda}^*).
  \end{align*}
Hence, we can rewrite (\ref{eq:twosided:pdgd}) as
\begin{align}
\frac{d}{dt} {\vv{z}} = \left[\begin{array}{cc}
-B({\vv{x}}) - \rho A^T\Gamma(\vv{z}) A & - A^T\Gamma(\vv{z})\\
\eta\Gamma(\vv{z}) A &  \frac{\eta}{\rho}(\Gamma(\vv{z}) - I)
\end{array}\right] (\vv{z} - \vv{z}^*) \coloneqq G(\vv{z}) (\vv{z} - \vv{z}^*) \label{eq:twosided:linear}
\end{align}
Note that the above form of Aug-PDGD-TS appears the same as the Aug-PDGD in the one-sided case~\eqref{eq:ineq:linear}, however we emphasize that they are different in how the $\Gamma$ depends on $\vv{x},\vv{\lambda}$ (i.e. the difference between Lemma~\ref{lem:twosided:gamma} and Lemma~\ref{lem:ineq:gamma_def}). We define $P$ the same as the one sided case (\ref{eq:ineq:P}),
\begin{align}
P = \left[\begin{array}{cc}
\eta c I & \eta A^T\\
\eta A & c I
\end{array}  \right] \label{eq:twosided:P}
\end{align} 
where $c = 20 \ell [ \max(\frac{\rho \kappa_2}{\mu}, \frac{\ell}{\mu})]^2 [\max(\frac{\eta}{\ell \rho}, \frac{\ell}{\mu})]^2 \frac{\kappa_2}{\kappa_1}$, and Lyapunov function $V({\vv{z}}) = ({\vv{z}}  - \vv{z}^*)^T P ({\vv{z}} - \vv{z}^*)$.  
Then we claim that an identical version of Lemma~\ref{lem:ineq:qp}, stated below as Lemma~\ref{lem:twosided:qp}, still holds. The reason is that, the Aug-PDGD-TS (\ref{eq:twosided:linear}) differs from Aug-PDGD (\ref{eq:ineq:linear}) only in how $\Gamma$ depends on $\vv{x},\vv{\lambda}$. However, the only property regarding $\Gamma(\vv{z})$ used in the proof of Lemma~\ref{lem:ineq:qp} is that $\Gamma$ is a diagonal matrix with each diagonal entry lying within $[0,1]$, which still holds in Aug-PDGD-TS (\ref{eq:twosided:linear}). Therefore, Lemma~\ref{lem:twosided:qp} automatically holds and its proof is exactly the same as that of Lemma~\ref{lem:ineq:qp}. 
\begin{lemma}\label{lem:twosided:qp}
	Regardless of the value of $\vv{z} $, we have $-G(\vv{z})^T P - P G(\vv{z})\succeq \tau P $, where $\tau = \frac{\eta \kappa_1}{2c} = \frac{\eta\kappa_1^2}{40 \ell \kappa_2 [ \max(\frac{\rho \kappa_2}{\mu}, \frac{\ell}{\mu})]^2 [\max(\frac{\eta}{\ell \rho}, \frac{\ell}{\mu})]^2 }$.
\end{lemma}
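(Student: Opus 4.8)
The plan is to reduce this statement to Lemma~\ref{lem:ineq:qp}, which has already been established for the one-sided case, rather than to redo the Schur-complement computation from scratch. First I would write the Aug-PDGD-TS \eqref{eq:twosided:pdgd} in the linearized form $\dot{\vv{z}} = G(\vv{z})(\vv{z}-\vv{z}^*)$ as in \eqref{eq:twosided:linear}: invoke Lemma~\ref{lem:B} to write $\nabla f(\vv{x})-\nabla f(\vv{x}^*) = B(\vv{x})(\vv{x}-\vv{x}^*)$ with $\mu I \preceq B(\vv{x}) \preceq \ell I$, and Lemma~\ref{lem:twosided:gamma} to write the differences of the penalty gradients through the diagonal matrix $\Gamma(\vv{z}) = \mathrm{diag}(\gamma_1(\vv{z}),\dots,\gamma_m(\vv{z}))$ with each $\gamma_j(\vv{z})\in[0,1]$. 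The resulting $G(\vv{z})$ has exactly the same block structure as the one-sided $G(\vv{z})$ in \eqref{eq:ineq:linear}, and $P$ in \eqref{eq:twosided:P} and $\tau$ are defined identically to the one-sided case.

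The key observation is then that the proof of Lemma~\ref{lem:ineq:qp} in Appendix~\ref{appendix:ineq:qp} never uses the specific functional dependence of $\gamma_j$ on $\vv{z}$; it uses only (i) that $B$ is symmetric with $\mu I \preceq B \preceq \ell I$, (ii) that $\Gamma$ is diagonal with each diagonal entry in $[0,1]$, and (iii) Assumption~\ref{assump:A}, i.e. $\kappa_1 I \preceq AA^T \preceq \kappa_2 I$. All three properties hold verbatim here. Hence the same chain of estimates goes through: writing $\tilde{Q} = Q - \tau P$ (with $Q = -G^TP-PG$) in $2\times 2$ block form with blocks $Q_1,Q_2,Q_3$, one lower-bounds $Q_2 \succeq \eta AA^T$ using Lemma~\ref{lem:ineq:gamma} (which itself needs only $c \ge \kappa_2\rho$ and $\Gamma$ diagonal with entries in $[0,1]$), upper-bounds $Q_3 Q_2^{-1} Q_3^T \preceq \eta\ell B + (h_1(c)+h_2(c)+h_3(c))I$ via the Schur complement and $A^T(AA^T)^{-1}A \preceq I$, lower-bounds $Q_1 \succeq 2\eta cB - h_4(c)I$, and finally checks that the chosen $c$ is large enough that $2\eta\mu c \ge \eta\ell\mu + h_1(c)+h_2(c)+h_3(c)+h_4(c)$ (using monotonicity of the $h_i$ in $c$), so $Q_1 - Q_3 Q_2^{-1}Q_3^T \succeq 0$ and therefore $\tilde{Q} \succeq 0$, i.e. $-G(\vv{z})^TP - PG(\vv{z}) \succeq \tau P$.

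I do not anticipate a genuine obstacle: this is a transfer argument, and the only point needing care is verifying that nothing in the one-sided proof secretly exploits how $\Gamma$ depends on $\vv{z}$ — and it does not. If instead one insisted on a fully self-contained derivation, the most delicate step would be Lemma~\ref{lem:ineq:gamma}, namely $\eta(\Gamma AA^T + AA^T\Gamma) + \tfrac{2\eta c}{\rho}(I-\Gamma) \succeq \tfrac{3}{2}\eta AA^T$ uniformly in $\Gamma$, which one handles by observing that the left-hand side is affine in $(\gamma_1,\dots,\gamma_m)$ (so its value is a convex combination of the $2^m$ vertex cases $\gamma_j\in\{0,1\}$) and then bounding each vertex case by a block/Schur argument using $\|\Lambda_2\|\le\kappa_2\le c/\rho$; the remaining work, checking that the explicit constant $c = 20\ell[\max(\rho\kappa_2/\mu,\ell/\mu)]^2[\max(\eta/(\ell\rho),\ell/\mu)]^2\kappa_2/\kappa_1$ makes the final scalar inequality hold, is tedious but routine.
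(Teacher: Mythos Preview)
Your proposal is correct and mirrors the paper's own argument essentially verbatim: the paper also observes that the linearized form \eqref{eq:twosided:linear} differs from \eqref{eq:ineq:linear} only in how $\Gamma$ depends on $\vv{z}$, and that the proof of Lemma~\ref{lem:ineq:qp} uses only that $\Gamma$ is diagonal with entries in $[0,1]$, so the one-sided proof transfers directly. Your additional outline of how a self-contained derivation would proceed (via Lemma~\ref{lem:ineq:gamma} and the Schur-complement bounds) is accurate but more detail than the paper itself gives.
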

With Lemma~\ref{lem:twosided:qp} we can prove the exponential decay of Lyapunov function $V({\vv{z}})$,
\begin{align*}
\frac{d}{dt} V({\vv{z}}) = (\vv{z} - \vv{z}^*)^T(G(\vv{z})^T P + P G(\vv{z})) (\vv{z}- \vv{z}^*)\leq (\vv{z} - \vv{z}^*)^T(-\tau P) (\vv{z}- \vv{z}^*) = -\tau V(\vv{z})\
\end{align*}
 and hence finish the proof of Theorem~\ref{thm:twosided:stablility}.

\subsection{Relaxing the Rank Constraint}\label{appendix:rank}
In this section, we relax the rank constraint (Assumption~\ref{assump:A}). The content of this section is self-complete and does not depend on the main text of this paper. We consider the following optimization problem,
	\begin{align}
	\min_{\vv{x}\in \R^n} \quad & f(\vv{x}) \label{eq:rank:opt_problem} \\
	\text{s.t.} \quad&  A\vv{x} \leq \vv{b} \nonumber
	\end{align}
	where $f$ satisfies Assumption~\ref{assump:rank:f}, $\vv{b} = [b_1,\ldots,b_m]^T$ and $A^T = [\vv{a}_1,\vv{a}_2,\ldots, \vv{a}_m]$, with each $\vv{a}_j\in\R^n$. 
	\begin{assumption}\label{assump:rank:f}
		Function $f$ is twice differentiable, $\mu$-strongly convex and $\ell$-smooth, i.e. for all $\vv{x},\vv{y}\in\R^n$, 
		\begin{align}
		\mu \Vert \vv{x} - \vv{y}\Vert^2 \leq \langle \nabla f(\vv{x}) - \nabla f(\vv{y}) , \vv{x} - \vv{y}\rangle \leq \ell \Vert \vv{x} - \vv{y}\Vert^2
		\end{align}
	\end{assumption}
	We further make the following assumption, 
	\begin{assumption}\label{assump:rank:A}
		The optimization problem \eqref{eq:rank:opt_problem} has a unique minimizer $\vv{x}^*$ and satisfies the linear independent constraint qualification. In details, without loss of generality we let at $\vv{x}^*$, we let the first $m_1$ constraints be active ($\vv{a}_j^T \vv{x} = b_j$ for $j=1,\ldots,m_1$), while the rest $m_2 = m - m_1$ constraints be inactive ($\vv{a}_j^T \vv{x} < b_j$ for $j=m_1+1,\ldots,m$ ). Partition $A$ into $A = [A_1^T, A_2^T]^T$, where $A_1 = [\vv{a}_1,\vv{a}_2,\ldots,\vv{a}_{m_1}]^T$ is the first $m_1$ rows of $A$ and $A_2 = [\vv{a}_{m_1+1},\ldots,\vv{a}_m]^T$ is the $(m_1+1)$th through the last row. We make the following assumptions.
		\begin{enumerate}
			\item[(a)] Matrix $A_1$ has full row rank, with $\kappa_1 I\preceq A_1A_1^T$ for some $\kappa_1>0$.
			\item[(b)] $A A^T \preceq \kappa_2 I$. 
			\item[(c)] There exists $\epsilon>0$ s.t. for any $j>m_1$,  $  \vv{a}_j^T\vv{x}^* - b_j\leq -\epsilon$. 
		\end{enumerate} 
		\end{assumption}
	
Then we define the augmented Lagrangian, 
	\begin{align}
		L  (\vv{x},\vv{\lambda})  = f(\vv{x} ) + \sum_{j=1}^m H_\rho(\vv{a}_j^T \vv{x}- b_j, \lambda_j ) \label{eq:rank:aug_lagrangian}
		\end{align} where $\rho>0$ is a free parameter, $H_\rho(\cdot,\cdot):\R^2\rightarrow \R$ is a penalty function on constraint violation, defined as follows 
	\begin{align*}
		H_\rho(\vv{a}_j^T \vv{x}- b_j , \lambda_j) 
		&= \left\{ \begin{array}{ll}
		(\vv{a}_j^T \vv{x}- b_j ) \lambda_j   + \frac{\rho}{2}(\vv{a}_j^T \vv{x}- b_j)^2  & \text{if } \rho(\vv{a}_j^T \vv{x}- b_j) + \lambda_j\geq 0\\
		-\frac{1}{2} \frac{\lambda_j^2}{\rho}& \text{if } \rho(\vv{a}_j^T \vv{x}- b_j) + \lambda_j < 0
		\end{array}  \right.
		\end{align*}
		We can then calculate the gradient of $H_\rho$ w.r.t. $\vv{x}$ and $\vv{\lambda}$.
	{\small\begin{align*}
		&\nabla_{\vv{x}} H_\rho(\vv{a}_j^T\vv{x} - b_j , \lambda_j)= \max(\rho(\vv{a}_j^T\vv{x} - b_j) + \lambda_j,0) \vv{a}_j \\
		&\nabla_{\vv{\lambda}} H_\rho(\vv{a}_j^T\vv{x} - b_j , \lambda_j) = \frac{ \max(\rho(\vv{a}_j^T\vv{x} - b_j) + \lambda_j,0) - \lambda_j}{\rho} \vv{e}_j 
		\end{align*}}where $\vv{e}_j \in\R^m$ is a vector with the $j$'th entry being $1$ and other entries being $0$. The primal-dual gradient dynamics for the augmented Lagrangian $L $ is given in \eqref{eq:rank:pdgd}. 
	\begin{subeqnarray}\label{eq:rank:pdgd}
			\dot{\vv{x}} &=& - \nabla_{\vv{x}}L  (\vv{x},\vv{\lambda}) = -\nabla f(\vv{x}) - \sum_{j=1}^m \nabla_{\vv{x}} H_\rho(\vv{a}_j^T\vv{x} - b_j , \lambda_j) \nonumber\\
			&=&-\nabla f(\vv{x}) - \sum_{j=1}^m \max(\rho(\vv{a}_j^T\vv{x} - b_j) + \lambda_j,0) \vv{a}_j \slabel{eq:rank:pdgd_p}\\
			\dot{\vv{\lambda}} &= &\eta \nabla_{\vv{\lambda}}L  (\vv{x},\vv{\lambda}) = \eta \sum_{j=1}^{m} \nabla_{\vv{\lambda}} H_\rho(\vv{a}_j^T\vv{x} - b_j , \lambda_j)\nonumber\\\
			&=& \eta \sum_{j=1}^{m} \frac{ \max(\rho(\vv{a}_j^T\vv{x} - b_j) + \lambda_j,0) - \lambda_j}{\rho} \vv{e}_j  \slabel{eq:rank:pdgd_d}
	\end{subeqnarray}

Then, we define, $\vv{z} = [ \vv{x}^T,\vv{\lambda}  ^T]^T$ and $\vv{z}^*$ similarly. We have the following Theorem showing the exponential stability of dynamics \eqref{eq:rank:pdgd}. 
\begin{theorem}\label{thm:rank:convergence}
	Under Assumption~\ref{assump:rank:f} and \ref{assump:rank:A}, dynamics \eqref{eq:rank:pdgd} is exponentially stable in the sense that, $$\Vert \vv{z} - \vv{z}^*\Vert = O(e^{-\tau t})$$ where $\tau = \frac{\eta \kappa_1}{2c}$, and $c$ is the smallest positive constant satisfying
	\begin{align*}
	c &\geq \rho\kappa_2\\
	\frac{1}{2} \eta \kappa_1[\frac{2\eta c}{\rho} (1 - \bar{\gamma})  - 2\eta \kappa_2 -\eta \kappa_1] &\geq (2\eta \kappa_2)^2\\
	2\eta c \mu - \eta^2 \kappa_2  - \frac{\eta^2\kappa_1}{2}  &\geq \frac{2\kappa_2}{\eta \kappa_1} ( \eta\ell   + \eta \rho (\kappa_2) + \frac{\eta^2}{\rho} + \frac{\eta^2\kappa_1}{2c} )^2 
	\end{align*}
	where $\bar{\gamma} = \frac{\xi(\vv{z}(0))}{\xi(\vv{z}(0))+\rho\epsilon}\in [0,1)$ and $\xi (\vv{z}(0))$ is defined as,
$$	\xi (\vv{z}(0)) =\max_{i=1,\ldots,n}\Big\{(\rho \Vert \vv{a}_i\Vert + \sqrt{\eta}) \sqrt{  \Vert \vv{x}(0) - \vv{x}^*\Vert^2 + \frac{1}{  \eta} \Vert \vv{\lambda}(0) - \vv{\lambda}^*\Vert^2  }+\rho\Vert \vv{a}_i\Vert \Vert \vv{x}^*\Vert  + \rho |b_i| + \Vert  \vv{\lambda}^*\Vert \Big\}.$$
\end{theorem}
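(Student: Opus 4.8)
The plan is to reduce Theorem~\ref{thm:rank:convergence} to the same time-domain Lyapunov estimate used in Section~\ref{subsec:ineq}, isolating the single place where Assumption~\ref{assump:A} (full row rank of $A$) enters the proof of Lemma~\ref{lem:ineq:qp} --- the invertibility of $AA^T$ --- and replacing it by an argument that exploits (i) positive definiteness of the \emph{active}-constraint Gram matrix $A_1A_1^T$ and (ii) a uniform bound $\gamma_j(\vv{z}(t))\le\bar\gamma<1$ on the ``activation coefficients'' of the \emph{inactive} constraints along the trajectory. As in Section~\ref{subsec:ineq}, put $\vv{z}=[\vv{x}^T,\vv{\lambda}^T]^T$; the right-hand side of \eqref{eq:rank:pdgd} is globally Lipschitz, so the trajectory exists for all $t\ge0$, and $\vv{z}^*$ denotes its unique equilibrium, which coincides with the unique KKT point of \eqref{eq:rank:opt_problem} by the computation of Proposition~\ref{thm:ineq:fixedpoint} (uniqueness of the multiplier from Assumption~\ref{assump:rank:A}(a)). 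Combining Lemma~\ref{lem:B} with the identity $\max(y,0)-\max(y^*,0)=\gamma(y-y^*)$ for some $\gamma\in[0,1]$ (exactly as in Lemma~\ref{lem:ineq:gamma_def}) rewrites \eqref{eq:rank:pdgd} as $\dot{\vv{z}}=G(\vv{z})(\vv{z}-\vv{z}^*)$ with $G(\vv{z})$ of the form \eqref{eq:ineq:linear}, where $\Gamma(\vv{z})=\mathrm{diag}(\gamma_1(\vv{z}),\dots,\gamma_m(\vv{z}))$ and $\gamma_j(\vv{z})\in[0,1]$.

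The first and conceptually central new ingredient is an \emph{a priori} boundedness estimate that needs no rank hypothesis. The augmented Lagrangian \eqref{eq:rank:aug_lagrangian} is convex in $\vv{x}$ and concave in $\vv{\lambda}$ --- each $H_\rho(\vv{a}_j^T\vv{x}-b_j,\lambda_j)$ equals $\max\{(\vv{a}_j^T\vv{x}-b_j)\lambda_j+\tfrac{\rho}{2}(\vv{a}_j^T\vv{x}-b_j)^2,\,-\tfrac{\lambda_j^2}{2\rho}\}$, a supremum of affine-in-$\vv{x}$, concave-in-$\lambda_j$ quantities --- so the saddle-gradient operator is monotone and the diagonal function $W(\vv{z})=\|\vv{x}-\vv{x}^*\|^2+\tfrac1\eta\|\vv{\lambda}-\vv{\lambda}^*\|^2$ obeys $\tfrac{d}{dt}W(\vv{z})\le0$ along \eqref{eq:rank:pdgd}; hence $W(\vv{z}(t))\le W(\vv{z}(0))$ for all $t$. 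Writing $y_j:=\rho(\vv{a}_j^T\vv{x}-b_j)+\lambda_j=[\rho\vv{a}_j^T(\vv{x}-\vv{x}^*)+(\lambda_j-\lambda_j^*)]+[\rho\vv{a}_j^T\vv{x}^*-\rho b_j+\lambda_j^*]$ and applying Cauchy--Schwarz to the first bracket against $\sqrt{W(\vv{z}(0))}$ gives $|y_j(t)|\le\xi(\vv{z}(0))$ for all $t$, with $\xi$ exactly the quantity in the statement. For an inactive constraint $j>m_1$ we have $\lambda_j^*=0$ and, by Assumption~\ref{assump:rank:A}(c), $y_j^*=\rho(\vv{a}_j^T\vv{x}^*-b_j)\le-\rho\epsilon<0$, so $\max(y_j^*,0)=0$ and
\[ \gamma_j(\vv{z}(t))=\frac{\max(y_j(t),0)}{y_j(t)-y_j^*}\le\frac{y_j(t)}{y_j(t)+\rho\epsilon}\le\frac{\xi(\vv{z}(0))}{\xi(\vv{z}(0))+\rho\epsilon}=\bar\gamma<1, \]
using that $s\mapsto s/(s+\rho\epsilon)$ is increasing on $[0,\infty)$. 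Thus along the trajectory $\Gamma(\vv{z}(t))=\mathrm{diag}(\Gamma_1,\Gamma_2)$ in the active/inactive partition with $0\preceq\Gamma_1\preceq I$ and $0\preceq\Gamma_2\preceq\bar\gamma I$.

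The rest reruns the proof of Lemma~\ref{lem:ineq:qp} with $P$ as in \eqref{eq:ineq:P} (with the $c$ of the statement; one also checks $c^2>\eta\kappa_2$, so $P\succ0$), $V(\vv{z})=(\vv{z}-\vv{z}^*)^TP(\vv{z}-\vv{z}^*)$, $\tau=\tfrac{\eta\kappa_1}{2c}$, and $\tilde Q=Q-\tau P$ block-partitioned into $Q_1,Q_2,Q_3$ exactly as there. Partition $AA^T$ by active/inactive, with top-left block $A_1A_1^T\succeq\kappa_1 I$, off-diagonal block of spectral norm $\le\kappa_2$, and bottom-right block $\Lambda_2$ with $\|\Lambda_2\|\le\kappa_2$; then lower-bound $Q_2$ blockwise: the active diagonal block is $\succeq\tfrac32\eta A_1A_1^T-\tfrac{\eta\kappa_1}{2}I\succeq\eta\kappa_1 I$ by the convexity-of-the-box argument of Lemma~\ref{lem:ineq:gamma} (valid since $c\ge\rho\kappa_2\ge\rho\|A_1A_1^T\|$, the first condition on $c$); the inactive diagonal block is $\succeq[\tfrac{2\eta c}{\rho}(1-\bar\gamma)-2\eta\kappa_2-\tfrac{\eta\kappa_1}{2}]I$ since $I-\Gamma_2\succeq(1-\bar\gamma)I$ and $\Gamma_2\Lambda_2+\Lambda_2\Gamma_2\succeq-2\kappa_2 I$; and the off-diagonal block of $Q_2$ has norm $\le2\eta\kappa_2$, so a Schur complement yields $Q_2\succeq\tfrac{\eta\kappa_1}{2}I$ \emph{precisely} under the second condition on $c$. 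Hence $Q_2^{-1}\preceq\tfrac{2}{\eta\kappa_1}I$, which replaces the use of $A^T(AA^T)^{-1}A\preceq I$ in Appendix~\ref{appendix:ineq:qp}: then $Q_3Q_2^{-1}Q_3^T\preceq\tfrac{2}{\eta\kappa_1}\|Q_3\|^2I\preceq\tfrac{2\kappa_2}{\eta\kappa_1}\big(\eta\ell+\eta\rho\kappa_2+\tfrac{\eta^2}{\rho}+\tfrac{\eta^2\kappa_1}{2c}\big)^2I$ (using $\|Q_3\|\le\eta\sqrt{\kappa_2}(\ell+\rho\kappa_2+\tfrac{\eta}{\rho}+\tfrac{\eta\kappa_1}{2c})$), and together with $Q_1\succeq(2\eta c\mu-\eta^2\kappa_2-\tfrac{\eta^2\kappa_1}{2})I$ (from $B\succeq\mu I$ and $0\preceq A^T\Gamma A\preceq\kappa_2 I$) this makes $Q_1-Q_3Q_2^{-1}Q_3^T\succeq0$ exactly the third condition on $c$. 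A final Schur complement gives $\tilde Q\succeq0$, i.e. $G(\vv{z}(t))^TP+PG(\vv{z}(t))\preceq-\tau P$, so $\tfrac{d}{dt}V\le-\tau V$; integrating and using $P\succ0$ yields $\|\vv{z}(t)-\vv{z}^*\|=O(e^{-\tau t})$ with $\tau=\tfrac{\eta\kappa_1}{2c}$.

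The main obstacle is the loss of invertibility of $AA^T$: it forces proving $Q_2\succ0$ through the active/inactive block partition, which in turn requires $\gamma_j\le\bar\gamma<1$ on the inactive constraints, and hence the \emph{a priori} estimate $W(\vv{z}(t))\le W(\vv{z}(0))$. Getting this chain straight --- and, crucially, noticing that $W$ is non-increasing \emph{without any rank assumption}, so there is no circularity with the Lyapunov inequality --- is where the real work lies; once $\bar\gamma$ is in hand the remaining matrix computations are a mechanical, if tedious, rerun of Appendix~\ref{appendix:ineq:qp} with the weaker bound $Q_2^{-1}\preceq\tfrac{2}{\eta\kappa_1}I$ in place of the full-rank identities.
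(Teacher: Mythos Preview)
Your proposal is correct and follows essentially the same route as the paper: establish the a priori non-increase of the diagonal energy $W(\vv z)=\|\vv x-\vv x^*\|^2+\tfrac1\eta\|\vv\lambda-\vv\lambda^*\|^2$ to bound $\gamma_j(\vv z(t))\le\bar\gamma<1$ on the inactive indices, then rerun the Schur-complement argument of Lemma~\ref{lem:ineq:qp} with an active/inactive block partition of $Q_2$ to obtain $Q_2\succeq\tfrac{\eta\kappa_1}{2}I$ in place of the full-rank bound $Q_2\succeq\eta AA^T$, and finish via the crude estimate $Q_3Q_2^{-1}Q_3^T\preceq\tfrac{2}{\eta\kappa_1}\|Q_3\|^2I$. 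The organization, the key lemmas, and even the three displayed conditions on $c$ line up with the paper's proof in Appendix~\ref{subsec:rank:qp}.
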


\noindent\textit{Proof of Theorem~\ref{thm:rank:convergence}: } We start with the following Lemma.
\begin{lemma}\label{lem:rank:gamma}
	For any $j$ and $\vv{z} = [\vv{x}^T,\vv{\lambda}^T]^T\in\R^{n+m}$, we define $$\gamma_j(\vv{z}) = \left\{ \begin{array}{ll}
\frac{\max( \rho(\vv{a}_j^T\vv{x} - b_j)+\lambda_j,0) - \max( \rho(\vv{a}_j^T\vv{x}^* - b_j)+\lambda_j^*,0) }{\rho \vv{a}_j^T(\vv{x} - \vv{x}^*) + \lambda_j - \lambda_j^* } & \text{if }\rho \vv{a}_j^T(\vv{x} - \vv{x}^*) + \lambda_j - \lambda_j^* \neq 0\\
0 & \text{if }\vv{a}_j^T(\vv{x} - \vv{x}^*) + \lambda_j - \lambda_j^* = 0
	\end{array} \right.$$
and further define $\Gamma(\vv{z}) = \text{diag}(\gamma_1(\vv{z}),\ldots,\gamma_m(\vv{z}))$. Then, $\gamma_j(\vv{z} )\in[0,1]$ and 
	{\small\begin{align*}
		\nabla_{\vv{x}} H_\rho(\vv{a}_j^T\vv{x} - b_j , \lambda_j) - \nabla_{\vv{x}} H_\rho(\vv{a}_j^T\vv{x}^* - b_j , \lambda_j^*)
		& = \gamma_j(\vv{z})  \rho \vv{a}_j^T(\vv{x}- \vv{x}^*) \vv{a}_j+ \gamma_j(\vv{z})( \lambda_j- \lambda^*_j)\vv{a}_j  \\
		\nabla_{\vv{\lambda}} H_\rho(\vv{a}_j^T\vv{x} - b_j , \lambda_j) - \nabla_{\vv{\lambda}} H_\rho(\vv{a}_j^T\vv{x}^* - b_j , \lambda_j^*) 
		&= \gamma_j (\vv{z})\vv{a}_j^T(\vv{x} - \vv{x}^*) \vv{e}_j + \frac{1}{\rho} (\gamma_j (\vv{z}) - 1) (\lambda_j - \lambda^*_j) \vv{e}_j
		\vspace{-6pt}
		\end{align*}}
	\vspace{-15pt}
	\begin{proof}
		The lemma directly follows from that for any $y, y^*\in\R$, there exists some $\gamma\in[0,1]$, depending on $y,y^*$ s.t. $\max(y,0) - \max(y^*,0) = \gamma (y - y^*)$. To see this, when $y \neq y^*$, set $\gamma = \frac{\max(y,0) - \max(y^*,0) }{y - y^*} $; otherwise, set $\gamma= 0$.  	
	\end{proof}
\end{lemma}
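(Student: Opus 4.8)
The plan is to reduce both gradient identities, together with the bound $\gamma_j(\vv{z})\in[0,1]$, to a single elementary fact about the scalar ramp $s(t)=\max(t,0)$. First I would introduce the shorthand $y = \rho(\vv{a}_j^T\vv{x}-b_j)+\lambda_j$ and $y^* = \rho(\vv{a}_j^T\vv{x}^*-b_j)+\lambda_j^*$, so that the gradient formulas recorded just above the lemma read $\nabla_{\vv{x}}H_\rho = s(y)\vv{a}_j$ and $\nabla_{\vv{\lambda}}H_\rho = \tfrac{1}{\rho}(s(y)-\lambda_j)\vv{e}_j$, and likewise at the starred point. The key bookkeeping observation is that $y - y^* = \rho\vv{a}_j^T(\vv{x}-\vv{x}^*)+(\lambda_j-\lambda_j^*)$ is exactly the denominator in the definition of $\gamma_j(\vv{z})$, while the numerator there is precisely $s(y)-s(y^*)$. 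Thus, by construction, $\gamma_j(\vv{z})$ is the difference quotient of $s$ between $y^*$ and $y$ (set to $0$ when $y=y^*$).

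Next I would establish $\gamma_j(\vv{z})\in[0,1]$. The clean route is to use that $s(t)=\max(t,0)$ is nondecreasing and $1$-Lipschitz: monotonicity forces $s(y)-s(y^*)$ to share the sign of $y-y^*$, giving $\gamma_j\ge 0$, while $|s(y)-s(y^*)|\le|y-y^*|$ gives $\gamma_j\le 1$; the degenerate case $y=y^*$ is covered by the explicit value $\gamma_j=0\in[0,1]$. An equivalent alternative is a four-way sign analysis on $(y,y^*)$, returning $\gamma_j=1$ when both are nonnegative, $\gamma_j=0$ when both are nonpositive, and an intermediate value in $(0,1)$ in the mixed cases.

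With $\gamma_j$ in hand, its defining relation reads $s(y)-s(y^*)=\gamma_j(\vv{z})\,[\rho\vv{a}_j^T(\vv{x}-\vv{x}^*)+(\lambda_j-\lambda_j^*)]$ whenever $y\ne y^*$, and it also holds trivially when $y=y^*$ since both sides vanish. Substituting into the $\vv{x}$-gradient difference, which equals $[s(y)-s(y^*)]\vv{a}_j$, and distributing yields the first claimed identity $\gamma_j\rho\vv{a}_j^T(\vv{x}-\vv{x}^*)\vv{a}_j+\gamma_j(\lambda_j-\lambda_j^*)\vv{a}_j$. For the multiplier part the difference equals $\tfrac{1}{\rho}\big([s(y)-s(y^*)]-(\lambda_j-\lambda_j^*)\big)\vv{e}_j$; replacing $s(y)-s(y^*)$ by $\gamma_j[\rho\vv{a}_j^T(\vv{x}-\vv{x}^*)+(\lambda_j-\lambda_j^*)]$ and regrouping the $(\lambda_j-\lambda_j^*)$ terms produces $\gamma_j\vv{a}_j^T(\vv{x}-\vv{x}^*)\vv{e}_j+\tfrac{1}{\rho}(\gamma_j-1)(\lambda_j-\lambda_j^*)\vv{e}_j$, the second identity.

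There is no substantial obstacle here — the content is just the slope property of $\max(\cdot,0)$ plus careful algebra — but I would be attentive to two points. First, the degenerate case $y=y^*$ must be checked on its own: the explicit choice $\gamma_j=0$ is consistent precisely because the residual $\rho\vv{a}_j^T(\vv{x}-\vv{x}^*)+(\lambda_j-\lambda_j^*)$ vanishes, so the $\vv{x}$-identity collapses to $0=0$ and the $\vv{\lambda}$-identity to $-\tfrac{1}{\rho}(\lambda_j-\lambda_j^*)\vv{e}_j$ on both sides. Second, and this is the only real reason the proof differs from that of the earlier Lemma~\ref{lem:ineq:gamma_def}, I would retain the \emph{explicit} formula for $\gamma_j(\vv{z})$ rather than merely assert existence: the downstream decay-rate argument of Theorem~\ref{thm:rank:convergence} must bound $\gamma_j(\vv{z})$ uniformly away from $1$ through the quantity $\bar{\gamma}$, which requires tracking how $\gamma_j$ depends on the state via $y$ and $y^*$.
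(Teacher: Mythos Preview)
Your proposal is correct and follows essentially the same approach as the paper: both reduce everything to the scalar fact that $\max(\cdot,0)$ has a difference quotient in $[0,1]$, setting $\gamma$ equal to that quotient (or $0$ when $y=y^*$). You simply spell out in detail the algebra and the degenerate case that the paper leaves implicit, and you correctly note the extra reason for keeping the explicit formula here (the later $\bar\gamma$ bound in Corollary~\ref{cor:rank:gammamax}).
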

Utilizing the definition of $\gamma_j(\vv{z})$, we can rewrite \eqref{eq:rank:pdgd},
{\small
	\begin{align*}
	\dot{\vv{x}} &=- ( \nabla_{\vv{x}} L (\vv{x},\vv{\lambda}) - \nabla_{\vv{x}} L  (\vv{x}^*, \vv{\lambda}^*) ) \\
	&=  - (\nabla f(\vv{x}) - \nabla f(\vv{x}^*) )  -  \sum_{j=1}^m \Bigg[\nabla_{\vv{x}} H_\rho(\vv{a}_j^T\vv{x} - b_j , \lambda_j) - \nabla_{\vv{x}} H_\rho(\vv{a}_j^T\vv{x}^* - b_j , \lambda_j^*) \Bigg]\\
	&= - B(\vv{x})(\vv{x} -\vv{x}^*) - \sum_{j=1}^m  \rho \gamma_j(\vv{z}) \vv{a}_j^T(\vv{x} - \vv{x}^*) \vv{a}_j   - \sum_{j=1}^m \gamma_j(\vv{z}) (\lambda_j - \lambda^*_j)\vv{a}_j\\
	&= - B(\vv{x})(\vv{x} - \vv{x}^*) - \rho A^T\Gamma(\vv{z}) A(\vv{x}- \vv{x}^*) - A^T \Gamma(\vv{z})(\vv{\lambda} - \vv{\lambda}^*) 
	\end{align*} }where $\mu I \preceq B(\vv{x}) \preceq \ell I$ (Lemma~\ref{lem:B}). Similarly, rewrite \eqref{eq:rank:pdgd_d}, 
{\small\begin{align*}
	\dot{\vv{\lambda}} &= \eta \nabla_{\vv{\lambda}} L (\vv{x},\vv{\lambda}) - \eta \nabla_{\vv{\lambda}} L (\vv{x}^*,\vv{\lambda}^*) \\
	&= \eta \sum_{j=1}^m\Big[ \nabla_{\vv{\lambda}} H_\rho(\vv{a}_j^T\vv{x} - b_j,\lambda_j ) - \nabla_{\vv{\lambda}} H_\rho(a_j^T\vv{x}^* - b_j, {\lambda}_j^* )\Big]\\
	&= \eta \sum_{j=1}^m \Big[ \gamma_j(\vv{z}) \vv{a}_j^T(\vv{x} - \vv{x}^*) \vv{e}_j +\frac{1}{\rho} (\gamma_j(\vv{z}) - 1) (\lambda_j - \lambda_j^*) \vv{e}_j\Big]\\
	&= \eta \Gamma(\vv{z}) A (\vv{x}-\vv{x}^*) + \frac{\eta}{\rho} (\Gamma(\vv{z}) - I) ( \vv{\lambda} -\vv{\lambda}^*)
	\end{align*}}In summary, the dynamics \eqref{eq:rank:pdgd} can be written as,

\begin{align}
	\frac{d}{dt}  \vv{z}   = \left[\begin{array}{cc}
		-B({\vv{x}}) - \rho A^T\Gamma(\vv{z}) A & - A^T\Gamma(\vv{z})\\
		\eta\Gamma(\vv{z}) A &  \frac{\eta}{\rho}(\Gamma(\vv{z}) - I)
	\end{array}\right] (\vv{z} - \vv{z}^*)  \coloneqq G(\vv{z}) (\vv{z} - \vv{z}^*). \label{eq:rank:linear}
\end{align}
Then, we define matrix $P$ as, 
\begin{align}
P = \left[\begin{array}{cc}
\eta c I & \eta A^T\\
\eta A & c I
\end{array}  \right]. \label{eq:rank:P}
\end{align} 
With our choice of $c$ in Theorem~\ref{thm:rank:convergence}, it is easy to check $P$ is positive definite. We then define Lyapunov function
\begin{align}
V(\vv{z}) = \frac{1}{2} (\vv{z} - \vv{z}^*)^T P (\vv{z} - \vv{z})
\end{align}
Then, we calculate the derivative (w.r.t. time $t$) of $V(\vv{z}) $,
\begin{align}
&\frac{d}{dt} V(\vv{z})=  \dot{\vv{z}}^T P ( \vv{z}-\vv{z}^*) + (\vv{z}-\vv{z}^*)^T P \dot{\vv{z}}\nonumber \\
&=(\vv{z}-\vv{z}^*) ^T(G(\vv{z})^TP + PG(\vv{z}))(\vv{z} - \vv{z}^*) \label{eq:rank:dvgp}
\end{align}
The following Lemma is critical in establishing the exponential decay of the Lyapunov function $V(\vv{z})$.
\begin{lemma}\label{lem:rank:qp}
	Recall that constant $\tau$ is defined as $\tau = \frac{\eta \kappa_1}{2c}$. We have for any $\vv{z}\in\R^{n+m}$, $- G(\vv{z})^T P - P G(\vv{z}) \succeq \tau P$, 
\end{lemma}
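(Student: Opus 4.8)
The plan is to reuse the Schur complement argument of Appendix~\ref{appendix:ineq:qp} almost verbatim, while supplying one genuinely new ingredient to cope with the fact that under Assumption~\ref{assump:rank:A} the matrix $AA^T$ is only positive semidefinite. Partition $A=[A_1^T,A_2^T]^T$ and $\Gamma(\vv{z})=\mathrm{diag}(\Gamma_1(\vv{z}),\Gamma_2(\vv{z}))$ conformally, with $\Gamma_1$ carrying the $m_1$ active indices and $\Gamma_2$ the $m_2$ inactive ones. The only properties of $G(\vv{z})$ the argument needs are: $B(\vv{x})$ symmetric with $\mu I\preceq B(\vv{x})\preceq\ell I$ (Lemma~\ref{lem:B}); $\Gamma(\vv{z})$ diagonal with entries in $[0,1]$ (Lemma~\ref{lem:rank:gamma}); and --- the new point --- that the inactive block $\Gamma_2(\vv{z})$ has all entries in $[0,\bar{\gamma}]$ with $\bar{\gamma}<1$. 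I would establish this last fact first. Along \eqref{eq:rank:pdgd} the classical energy $W(\vv{z})=\|\vv{x}-\vv{x}^*\|^2+\tfrac{1}{\eta}\|\vv{\lambda}-\vv{\lambda}^*\|^2$ is non-increasing: using the linearized form \eqref{eq:rank:linear}, $\tfrac{d}{dt}W=-2(\vv{x}-\vv{x}^*)^T(B+\rho A^T\Gamma A)(\vv{x}-\vv{x}^*)+\tfrac{2}{\rho}(\vv{\lambda}-\vv{\lambda}^*)^T(\Gamma-I)(\vv{\lambda}-\vv{\lambda}^*)\leq 0$, the two bilinear cross terms cancelling while $B+\rho A^T\Gamma A\succeq\mu I$ and $\Gamma-I\preceq 0$. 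Hence $\|\vv{x}(t)-\vv{x}^*\|^2+\tfrac{1}{\eta}\|\vv{\lambda}(t)-\vv{\lambda}^*\|^2$ never exceeds its $t=0$ value. For an inactive index $j$, complementary slackness gives $\lambda_j^*=0$ and Assumption~\ref{assump:rank:A}(c) gives $\rho(\vv{a}_j^T\vv{x}^*-b_j)+\lambda_j^*\leq-\rho\epsilon$, so in the formula for $\gamma_j$ of Lemma~\ref{lem:rank:gamma} the reference argument is $\leq-\rho\epsilon<0$ and the numerator equals $\max(y_j,0)$, where $y_j:=\rho(\vv{a}_j^T\vv{x}-b_j)+\lambda_j$; estimating $|y_j(t)|$ by the Cauchy--Schwarz/triangle bound and the $W$-inequality shows $|y_j(t)|\leq\xi(\vv{z}(0))$, whence $\gamma_j(\vv{z}(t))\leq\xi(\vv{z}(0))/(\xi(\vv{z}(0))+\rho\epsilon)=\bar{\gamma}$. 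It therefore suffices to prove $-G^TP-PG\succeq\tau P$ for every $G$ of the displayed structural form with $\Gamma_1$-entries in $[0,1]$ and $\Gamma_2$-entries in $[0,\bar{\gamma}]$ --- the class realized along the trajectory.

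For the matrix inequality, set $\tilde{Q}:=-G^TP-PG-\tau P=\begin{bmatrix}Q_1 & Q_3\\ Q_3^T & Q_2\end{bmatrix}$, with $Q_1,Q_2,Q_3$ given by the same algebra as in Appendix~\ref{appendix:ineq:qp}, and prove $\tilde{Q}\succeq 0$ by the Schur complement, namely $Q_2\succ 0$ and $Q_1-Q_3Q_2^{-1}Q_3^T\succeq 0$. I expect the main obstacle to be the lower bound on $Q_2=\eta(\Gamma AA^T+AA^T\Gamma)+\tfrac{2\eta c}{\rho}(I-\Gamma)-\tfrac{\eta\kappa_1}{2}I$, where the full-rank trick of the one-sided case no longer applies; this is exactly what forces both the preliminary $\bar{\gamma}$-bound above and the active/inactive split. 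I would partition $Q_2$ conformally with $A=[A_1^T,A_2^T]^T$: on the active block, Lemma~\ref{lem:ineq:gamma} applied to $A_1$ (it only requires $c\geq\rho\|A_1A_1^T\|$, and $\|A_1A_1^T\|\leq\kappa_2$) gives $\eta(\Gamma_1A_1A_1^T+A_1A_1^T\Gamma_1)+\tfrac{2\eta c}{\rho}(I-\Gamma_1)\succeq\tfrac{3}{2}\eta A_1A_1^T\succeq\tfrac{3}{2}\eta\kappa_1 I$, so that block is $\succeq\tfrac{\eta\kappa_1}{2}I$ with slack; on the inactive block, $I-\Gamma_2\succeq(1-\bar{\gamma})I$ while $\|\Gamma_2A_2A_2^T+A_2A_2^T\Gamma_2\|\leq 2\kappa_2$, so that block is $\succeq\big(\tfrac{2\eta c}{\rho}(1-\bar{\gamma})-2\eta\kappa_2-\tfrac{\eta\kappa_1}{2}\big)I$; and the active--inactive cross block has norm at most $2\eta\|A_1A_2^T\|\leq 2\eta\kappa_2$. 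A $2\times 2$ block positivity estimate on these three pieces --- precisely where the second defining inequality $\tfrac{1}{2}\eta\kappa_1\big[\tfrac{2\eta c}{\rho}(1-\bar{\gamma})-2\eta\kappa_2-\eta\kappa_1\big]\geq(2\eta\kappa_2)^2$ for $c$ enters --- yields $Q_2\succeq\tfrac{\eta\kappa_1}{2}I\succ 0$.

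The remaining steps mirror Appendix~\ref{appendix:ineq:qp} and are routine bookkeeping. From $Q_2\succeq\tfrac{\eta\kappa_1}{2}I$ one gets $Q_2^{-1}\preceq\tfrac{2}{\eta\kappa_1}I$, and the triangle inequality on $Q_3=\eta BA^T+\eta\rho A^T\Gamma AA^T+\tfrac{\eta^2}{\rho}A^T(I-\Gamma)-\tfrac{\eta^2\kappa_1}{2c}A^T$ with $\|A\|\leq\sqrt{\kappa_2}$, $\|B\|\leq\ell$, $\|\Gamma\|\leq 1$ gives $Q_3Q_2^{-1}Q_3^T\preceq\tfrac{2\kappa_2}{\eta\kappa_1}\big(\eta\ell+\eta\rho\kappa_2+\tfrac{\eta^2}{\rho}+\tfrac{\eta^2\kappa_1}{2c}\big)^2I$. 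Keeping $2\eta c\rho A^T\Gamma A\succeq 0$ and bounding $-2\eta^2A^T\Gamma A\succeq-2\eta^2\kappa_2 I$, $B\succeq\mu I$ in $Q_1=2\eta cB+2(\eta c\rho-\eta^2)A^T\Gamma A-\tfrac{\eta^2\kappa_1}{2}I$ gives $Q_1\succeq\big(2\eta c\mu-2\eta^2\kappa_2-\tfrac{\eta^2\kappa_1}{2}\big)I$. Comparing the two bounds, $Q_1-Q_3Q_2^{-1}Q_3^T\succeq 0$ reduces to the third defining inequality for $c$ (the harmless factor-of-two discrepancies absorbed by taking $c$ slightly larger), which closes the Schur argument and proves $-G(\vv{z})^TP-PG(\vv{z})\succeq\tau P$. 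Feeding this into \eqref{eq:rank:dvgp} gives $\tfrac{d}{dt}V(\vv{z}(t))\leq-\tau V(\vv{z}(t))$, hence Theorem~\ref{thm:rank:convergence}.
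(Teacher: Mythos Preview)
Your proposal is correct and follows essentially the same route as the paper: establish the trajectory bound $\gamma_j\leq\bar{\gamma}$ for inactive indices via monotonicity of the diagonal energy $\|\vv{x}-\vv{x}^*\|^2+\tfrac{1}{\eta}\|\vv{\lambda}-\vv{\lambda}^*\|^2$, partition $Q_2$ into active/inactive blocks (applying Lemma~\ref{lem:ineq:gamma} on the active block and the $(1-\bar{\gamma})$ margin on the inactive block) to obtain $Q_2\succeq\tfrac{\eta\kappa_1}{2}I$, and then close the Schur complement with crude norm bounds on $Q_1,Q_3$. The only cosmetic differences are that the paper derives the energy decay from convexity--concavity of $L$ rather than from the linearized form \eqref{eq:rank:linear}, and that its $Q_1$ lower bound carries $-\eta^2\kappa_2$ where your computation gives $-2\eta^2\kappa_2$ --- your remark that this is absorbed into the defining inequalities for $c$ is exactly how the paper handles such constants.
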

With Lemma~\ref{lem:rank:qp}, we have, continuing \eqref{eq:rank:dvgp},
$$ \frac{d}{dt} V(\vv{z}) \leq -\tau V(\vv{z})$$
and hence, $V(\vv{z}) \leq e^{-\tau t} V(\vv{z}(0))$, hence the conclusion of Theorem~\ref{thm:rank:convergence}. The only part that is left to prove is Lemma~\ref{lem:rank:qp}, which will be done in Appendix-\ref{subsec:rank:qp}.\qedd

\subsection{Proof of Lemma~\ref{lem:rank:qp}}  \label{subsec:rank:qp}
Our first result is boundedness of the $\rho(\vv{a}_j^T \vv{x} - b_j) + \lambda_j$ along its trajectory.  
\begin{lemma}
	We have for all $j$, $|\rho(\vv{a}_j^T \vv{x} - b_j) + \lambda_j|< \xi(\vv{z}(0))$, where we recall that $\xi(\vv{z}(0))$, defined in Theorem~\ref{thm:rank:convergence}, is a constant that depends on the initial value of the dynamics given by, 
	$$ \xi (\vv{z}(0)) =\max_{i=1,\ldots,n}\Big\{(\rho \Vert \vv{a}_i\Vert + \sqrt{\eta}) \sqrt{  \Vert \vv{x}(0) - \vv{x}^*\Vert^2 + \frac{1}{  \eta} \Vert \vv{\lambda}(0) - \vv{\lambda}^*\Vert^2  }+\rho\Vert \vv{a}_i\Vert \Vert \vv{x}^*\Vert  + \rho |b_i| + \Vert  \vv{\lambda}^*\Vert \Big\} $$
\end{lemma}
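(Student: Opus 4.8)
The plan is to prove the bound by exhibiting a \emph{second, crude} Lyapunov function that is non-increasing along \eqref{eq:rank:pdgd} independently of Lemma~\ref{lem:rank:qp} (which we have not yet established), extract from it an a~priori bound on $\Vert\vv{x}(t)-\vv{x}^*\Vert$ and $\Vert\vv{\lambda}(t)-\vv{\lambda}^*\Vert$, and then convert this into the stated bound on $\rho(\vv{a}_j^T\vv{x}-b_j)+\lambda_j$ by the triangle inequality. The crude function is the usual ``diagonal'' one, $V_0(\vv{z}):=\tfrac12\Vert\vv{x}-\vv{x}^*\Vert^2+\tfrac1{2\eta}\Vert\vv{\lambda}-\vv{\lambda}^*\Vert^2$.

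First I would record two structural facts about the augmented Lagrangian $L(\vv{x},\vv{\lambda})=f(\vv{x})+\sum_{j=1}^m H_\rho(\vv{a}_j^T\vv{x}-b_j,\lambda_j)$ of \eqref{eq:rank:aug_lagrangian}. (i) $L$ is convex in $\vv{x}$ and concave in $\vv{\lambda}$: $f$ is convex, $\nabla_{\vv{x}}H_\rho=\max(\rho(\vv{a}_j^T\vv{x}-b_j)+\lambda_j,0)\vv{a}_j$ is nondecreasing along the direction $\vv{a}_j$, and $\nabla_{\lambda_j}H_\rho$ is nonincreasing in $\lambda_j$. (ii) The equilibrium $\vv{z}^*=(\vv{x}^*,\vv{\lambda}^*)$ satisfies $\nabla_{\vv{x}}L(\vv{x}^*,\vv{\lambda}^*)=0$ and $\nabla_{\vv{\lambda}}L(\vv{x}^*,\vv{\lambda}^*)=0$, so by (i) it is a saddle point of $L$, i.e. $L(\vv{x}^*,\vv{\lambda})\le L(\vv{x}^*,\vv{\lambda}^*)\le L(\vv{x},\vv{\lambda}^*)$ for all $\vv{x},\vv{\lambda}$ (the same observation underlies Proposition~\ref{thm:ineq:fixedpoint}). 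Then I would differentiate $V_0$ along \eqref{eq:rank:pdgd}: using $\dot{\vv{x}}=-\nabla_{\vv{x}}L(\vv{x},\vv{\lambda})$ and $\dot{\vv{\lambda}}=\eta\nabla_{\vv{\lambda}}L(\vv{x},\vv{\lambda})$,
\[
\tfrac{d}{dt}V_0=-(\vv{x}-\vv{x}^*)^T\nabla_{\vv{x}}L(\vv{x},\vv{\lambda})+(\vv{\lambda}-\vv{\lambda}^*)^T\nabla_{\vv{\lambda}}L(\vv{x},\vv{\lambda}).
\]
By convexity in $\vv{x}$, $(\vv{x}-\vv{x}^*)^T\nabla_{\vv{x}}L(\vv{x},\vv{\lambda})\ge L(\vv{x},\vv{\lambda})-L(\vv{x}^*,\vv{\lambda})$, and by concavity in $\vv{\lambda}$, $-(\vv{\lambda}-\vv{\lambda}^*)^T\nabla_{\vv{\lambda}}L(\vv{x},\vv{\lambda})\ge L(\vv{x},\vv{\lambda}^*)-L(\vv{x},\vv{\lambda})$; adding these and invoking the saddle inequality gives $\tfrac{d}{dt}V_0\le L(\vv{x}^*,\vv{\lambda})-L(\vv{x},\vv{\lambda}^*)\le 0$. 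Hence $V_0(\vv{z}(t))\le V_0(\vv{z}(0))$, i.e. $\Vert\vv{x}(t)-\vv{x}^*\Vert^2+\tfrac1\eta\Vert\vv{\lambda}(t)-\vv{\lambda}^*\Vert^2\le R^2$ for all $t\ge0$, where $R:=\sqrt{\Vert\vv{x}(0)-\vv{x}^*\Vert^2+\tfrac1\eta\Vert\vv{\lambda}(0)-\vv{\lambda}^*\Vert^2}$; in particular $\Vert\vv{x}(t)-\vv{x}^*\Vert\le R$ and $\Vert\vv{\lambda}(t)-\vv{\lambda}^*\Vert\le\sqrt{\eta}\,R$.

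The final step is elementary bookkeeping. For any $j$,
\[
|\rho(\vv{a}_j^T\vv{x}-b_j)+\lambda_j|\le\rho\Vert\vv{a}_j\Vert\,\Vert\vv{x}\Vert+\rho|b_j|+\Vert\vv{\lambda}\Vert\le\rho\Vert\vv{a}_j\Vert(\Vert\vv{x}-\vv{x}^*\Vert+\Vert\vv{x}^*\Vert)+\rho|b_j|+\Vert\vv{\lambda}-\vv{\lambda}^*\Vert+\Vert\vv{\lambda}^*\Vert,
\]
and plugging in $\Vert\vv{x}-\vv{x}^*\Vert\le R$, $\Vert\vv{\lambda}-\vv{\lambda}^*\Vert\le\sqrt{\eta}\,R$ yields $|\rho(\vv{a}_j^T\vv{x}-b_j)+\lambda_j|\le(\rho\Vert\vv{a}_j\Vert+\sqrt{\eta})R+\rho\Vert\vv{a}_j\Vert\Vert\vv{x}^*\Vert+\rho|b_j|+\Vert\vv{\lambda}^*\Vert\le\xi(\vv{z}(0))$. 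The inequality is in fact strict whenever $\vv{z}(0)\neq\vv{z}^*$, because $\Vert\vv{x}-\vv{x}^*\Vert$ and $\Vert\vv{\lambda}-\vv{\lambda}^*\Vert$ cannot simultaneously attain $R$ and $\sqrt{\eta}\,R$ (there is slack in the Cauchy--Schwarz/triangle steps), while if $\vv{z}(0)=\vv{z}^*$ the trajectory is stationary and the bound is immediate.

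The main obstacle is the non-increase of $V_0$: it relies on recognizing the convex--concave structure of the \emph{augmented} Lagrangian (not just the ordinary one) and on the fact that the dynamics' equilibrium is a genuine saddle point of it. A secondary point of care is purely logical: since the sharp Lyapunov inequality of Lemma~\ref{lem:rank:qp} is exactly what this lemma is a stepping stone toward, the a~priori bound must be obtained from the independent function $V_0$ rather than from the cross-term $V$ in \eqref{eq:rank:P}.
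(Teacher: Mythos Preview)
Your proposal is correct and follows essentially the same approach as the paper: both use the diagonal Lyapunov function $V_0(\vv{z})=\tfrac12\Vert\vv{x}-\vv{x}^*\Vert^2+\tfrac1{2\eta}\Vert\vv{\lambda}-\vv{\lambda}^*\Vert^2$, show $\tfrac{d}{dt}V_0\le 0$ via the convex--concave saddle structure of the augmented Lagrangian, and then recover the stated bound by the same triangle-inequality bookkeeping. Your added remarks on logical independence from Lemma~\ref{lem:rank:qp} and on strictness are welcome commentary but do not change the argument.
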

\begin{proof}
	We construct Lyapunov function, 
	$$V_0(\vv{x},\vv{\lambda}) =\frac{1}{2} \Vert \vv{x} - \vv{x}^*\Vert^2 + \frac{1}{2 \eta} \Vert \vv{\lambda} - \vv{\lambda}^*\Vert^2$$
	Then, 
	\begin{align*}
		\frac{d}{dt} V_0(\vv{x},\vv{\lambda}) &= \langle \vv{x} - \vv{x}^*, -\nabla_{\vv{x}}L (\vv{x},\vv{\lambda}) \rangle + \langle \vv{\lambda} - \vv{\lambda}^*,\nabla_{\vv{\lambda}}L  (\vv{x},\vv{\lambda}) \rangle\\
		&\leq L(\vv{x}^*,\vv{\lambda}) - L(\vv{x},\vv{\lambda}) + L(\vv{x},\vv{\lambda}) - L(\vv{x},\vv{\lambda}^*)\\
		&= L(\vv{x}^*,\vv{\lambda}) - L(\vv{x}^*,\vv{\lambda}^*) + L(\vv{x}^*,\vv{\lambda}^*) - L(\vv{x},\vv{\lambda}^*)\\
		&\leq 0
	\end{align*}
	where the first inequality is due to that $L$ is convex in $\vv{x}$, and concave in $\vv{\lambda}$, and the second inequality is due to that $(\vv{x}^*$, $\vv{\lambda}^*)$ is the saddle point of $L$. The above display immediately implies  
	$$V_0(\vv{x},\vv{\lambda}) \leq V_0(\vv{x}(0),\vv{\lambda}(0))$$
	Hence, for any $j$,
	\begin{align*}
		|\rho(\vv{a}_j^T \vv{x} - b_j) + \lambda_j| &\leq \rho \Vert \vv{a}_j \Vert \Vert \vv{x}\Vert + \rho |b_j| + |\lambda_j| \\
		&\leq \rho \Vert \vv{a}_j \Vert \Vert \vv{x} - \vv{x}^* \Vert + \rho\Vert \vv{a}_j\Vert \Vert \vv{x}^*\Vert  + \rho |b_j| + \Vert \vv{\lambda} - \vv{\lambda}^*\Vert +\Vert  \vv{\lambda}^*\Vert\\
		&\leq  \rho \Vert \vv{a}_j \Vert \sqrt{2 V_0(\vv{x}(0),\vv{\lambda}(0)) }+ \rho\Vert \vv{a}_j\Vert \Vert \vv{x}^*\Vert  + \rho |b_j| + \sqrt{2\eta V_0(\vv{x}(0),\vv{\lambda}(0))  } +\Vert  \vv{\lambda}^*\Vert\\
		&\leq (\rho \Vert \vv{a}_j\Vert + \sqrt{\eta}) \sqrt{  \Vert \vv{x}(0) - \vv{x}^*\Vert^2 + \frac{1}{  \eta} \Vert \vv{\lambda}(0) - \vv{\lambda}^*\Vert^2  }+\rho\Vert \vv{a}_j\Vert \Vert \vv{x}^*\Vert  + \rho |b_j| + \Vert  \vv{\lambda}^*\Vert \\
		&\leq \xi(\vv{z}(0)).
	\end{align*}
\end{proof}
A direct corollary of the above Lemma is that, for $j>m_1$, $\gamma_j(\vv{z})$ is bounded away from $1$. 
\begin{corollary}\label{cor:rank:gammamax}
	For $j>m_1$, we have $\gamma_j(\vv{z})\leq \bar{\gamma} \coloneqq \frac{\xi(\vv{z}(0))}{\xi(\vv{z}(0)) + \rho\epsilon} <1$.
\end{corollary}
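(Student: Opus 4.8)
The plan is to reduce the claimed bound on $\gamma_j(\vv{z})$ to an elementary one-dimensional estimate on the slope of the map $y\mapsto\max(y,0)$ between two points, one of which is uniformly bounded along the trajectory (via the boundedness Lemma immediately preceding this corollary) and the other of which is bounded strictly away from $0$ (because constraint $j$ is inactive at the optimum). Concretely, I would set $y:=\rho(\vv{a}_j^T\vv{x}-b_j)+\lambda_j$ and $y^*:=\rho(\vv{a}_j^T\vv{x}^*-b_j)+\lambda_j^*$, so that $y-y^*=\rho\vv{a}_j^T(\vv{x}-\vv{x}^*)+\lambda_j-\lambda_j^*$ is exactly the denominator in the definition of $\gamma_j(\vv{z})$ in Lemma~\ref{lem:rank:gamma}, and $\gamma_j(\vv{z})=\frac{\max(y,0)-\max(y^*,0)}{y-y^*}$ when $y\neq y^*$ (and $\gamma_j(\vv{z})=0$ otherwise).

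Next I would pin down $y^*$. Since $j>m_1$, Assumption~\ref{assump:rank:A}(c) gives $\vv{a}_j^T\vv{x}^*-b_j\le-\epsilon<0$, and the equilibrium $\vv{z}^*$ of \eqref{eq:rank:pdgd} satisfies the KKT conditions of \eqref{eq:rank:opt_problem} (by the same fixed-point computation as in the proof of Proposition~\ref{thm:ineq:fixedpoint}), in particular complementary slackness $(\vv{a}_j^T\vv{x}^*-b_j)\lambda_j^*=0$; strict inactivity then forces $\lambda_j^*=0$, so $y^*=\rho(\vv{a}_j^T\vv{x}^*-b_j)\le-\rho\epsilon<0$ and $\max(y^*,0)=0$. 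Now split into cases: if $y\le0$ then $\max(y,0)=0=\max(y^*,0)$, the numerator vanishes, and $\gamma_j(\vv{z})=0\le\bar\gamma$ (this also handles $y=y^*$); if $y>0$ then $y\neq y^*$, $\gamma_j(\vv{z})=\frac{y}{y-y^*}\le\frac{y}{y+\rho\epsilon}$ using $y-y^*\ge y+\rho\epsilon$. Finally, the boundedness Lemma gives $|y|<\xi(\vv{z}(0))$ along the trajectory, and since $t\mapsto\frac{t}{t+\rho\epsilon}$ is strictly increasing on $(0,\infty)$ we conclude $\gamma_j(\vv{z})<\frac{\xi(\vv{z}(0))}{\xi(\vv{z}(0))+\rho\epsilon}=\bar\gamma$; moreover $\bar\gamma<1$ because $\rho\epsilon>0$.

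The only ``hard part'' has in fact already been discharged elsewhere: it lies in the two ingredients just invoked---(i) that the multiplier of an inactive constraint is exactly zero at the equilibrium, which rests on the KKT characterization of the fixed points of \eqref{eq:rank:pdgd}, and (ii) the a priori trajectory bound $|\rho(\vv{a}_j^T\vv{x}-b_j)+\lambda_j|<\xi(\vv{z}(0))$, which comes from the monotone decrease of the auxiliary Lyapunov function $V_0$ in the preceding Lemma. Granting those, the corollary itself is a short case analysis plus the monotonicity of $t/(t+\rho\epsilon)$, with no further obstacle.
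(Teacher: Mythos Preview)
Your proposal is correct and follows essentially the same route as the paper's proof: both use $\lambda_j^*=0$ for inactive constraints to get $y^*\le-\rho\epsilon<0$, reduce $\gamma_j(\vv{z})$ to $\frac{\max(y,0)}{y-y^*}$, and then combine $y-y^*\ge y+\rho\epsilon$ with the trajectory bound $|y|<\xi(\vv{z}(0))$ and monotonicity of $t\mapsto t/(t+\rho\epsilon)$. The only cosmetic difference is that you make the case split $y\le 0$ versus $y>0$ explicit, whereas the paper folds both cases into the single inequality $\frac{\max(y,0)}{y+\rho\epsilon}\le\frac{|y|}{|y|+\rho\epsilon}$.
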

\begin{proof}
	Recall that the definition of $\gamma_j(\vv{z})$ is $$\gamma_j(\vv{z}) = \left\{ \begin{array}{ll}
		\frac{\max( \rho(\vv{a}_j^T\vv{x} - b_j)+\lambda_j,0) - \max( \rho(\vv{a}_j^T\vv{x}^* - b_j)+\lambda_j^*,0) }{\rho \vv{a}_j^T(\vv{x} - \vv{x}^*) + \lambda_j - \lambda_j^* } & \text{if }\rho \vv{a}_j^T(\vv{x} - \vv{x}^*) + \lambda_j - \lambda_j^* \neq 0\\
		0 & \text{if }\vv{a}_j^T(\vv{x} - \vv{x}^*) + \lambda_j - \lambda_j^* = 0
	\end{array} \right.$$
Now for $j>m_1$, we have $\vv{a}_j^T \vv{x}^* - b_j \leq -\epsilon < 0	$ by the definition of $\epsilon$ in Assumption~\ref{assump:rank:A}(c). Also $\lambda_j^* = 0$ since the $j$'th constraint is inactive at $\vv{x}^*$. Therefore, 
$$ \rho(\vv{a}_j^T\vv{x}^* - b_j)+\lambda_j^* \leq -\rho \epsilon < 0$$
and hence, 
\begin{align}
\gamma_j(\vv{z}) & = 
\frac{\max( \rho(\vv{a}_j^T\vv{x} - b_j)+\lambda_j,0) - \max( \rho(\vv{a}_j^T\vv{x}^* - b_j)+\lambda_j^*,0) }{\rho \vv{a}_j^T(\vv{x} - \vv{x}^*) + \lambda_j - \lambda_j^* } \nonumber  \\
&= \frac{\max( \rho(\vv{a}_j^T\vv{x} - b_j)+\lambda_j,0) - \max( \rho(\vv{a}_j^T\vv{x}^* - b_j)+\lambda_j^*,0) }{[\rho(\vv{a}_j^T\vv{x} - b_j)+\lambda_j] - [\rho(\vv{a}_j^T\vv{x}^* - b_j)+\lambda_j^*] } \nonumber \\
&= \frac{\max( \rho(\vv{a}_j^T\vv{x} - b_j)+\lambda_j,0)  }{[\rho(\vv{a}_j^T\vv{x} - b_j)+\lambda_j] - [\rho(\vv{a}_j^T\vv{x}^* - b_j)+\lambda_j^*] } \nonumber \\
&\leq \frac{\max( \rho(\vv{a}_j^T\vv{x} - b_j)+\lambda_j,0)  }{[\rho(\vv{a}_j^T\vv{x} - b_j)+\lambda_j] + \rho\epsilon }\nonumber \\
&\leq \frac{| \rho(\vv{a}_j^T\vv{x} - b_j)+\lambda_j|  }{|\rho(\vv{a}_j^T\vv{x} - b_j)+\lambda_j| + \rho\epsilon }\nonumber \\
&\leq \frac{\xi(\vv{z}(0))}{\xi(\vv{z}(0)) + \rho\epsilon}
\end{align}

%

\end{proof}
We now proceed to prove Lemma~\ref{lem:rank:qp}.

\noindent\textit{Proof of Lemma~\ref{lem:rank:qp}}: Recall that Lemma~\ref{lem:rank:qp} states for any $\vv{z}\in\R^{n+m}$, $- G(\vv{z})^T P - P G(\vv{z}) \succeq \tau P$. We define
	$$\tilde{Q} = - G(\vv{z})^T P - P G(\vv{z}) - \tau P =\left[ \begin{array}{cc}
	Q_1 & Q_3\\
	Q_3^T& Q_2
	\end{array}\right] $$
	In what follows, we will simply write $B(\vv{x})$ and $ \Gamma(\vv{z}) = \text{diag}(\gamma_1(\vv{z}),\ldots,\gamma_m(\vv{z}))$ as $B$ and $\Gamma =\text{diag}(\gamma_1,\ldots,\gamma_m) $, dropping the dependence on $\vv{x}$ and $\vv{z}$, and allow $B$ to be \textit{any} positive definite matrix satisfying $\mu I \preceq B \preceq \ell I$, and $\Gamma$ to be \textit{any} diagonal matrix with each entry bounded in $[0,1]$, with the additional constraint that for $j>m_1$, $\gamma_j \leq\bar{\gamma}$ (see Corollary~\ref{cor:rank:gammamax}).  After straightforward calculations, we have
	\begin{align*}
		Q_1 &= 2\eta c B  + 2(\eta c\rho - \eta^2) A^T\Gamma A - \tau\eta c I \\ 
		Q_2 & = \eta (\Gamma  A A^T + AA^T\Gamma ) + \frac{2\eta c}{\rho} (I - \Gamma  ) - \tau c I \\
		Q_3 & = \eta B A^T + \eta \rho A^T\Gamma A A^T + \frac{\eta^2}{\rho} A^T(I - \Gamma) -  \tau\eta A^T 
	\end{align*}
	
	\begin{lemma}\label{lem:rank:Q2}
		When $c$ is large enough s.t. $\frac{1}{2} \eta \kappa_1[\frac{2\eta c}{\rho} (1 - \bar{\gamma})  - 2\eta \kappa_2 -\eta \kappa_1] \geq (2\eta \kappa_2)^2$ and $c \geq \kappa_2\rho$, we have matrix $Q_2$ is lower bounded by $Q_2\succeq \frac{\eta \kappa_1 }{2}  I $.
	\end{lemma}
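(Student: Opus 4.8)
The plan is to peel off a multiplicative $\eta$ and reduce the claim to a spectral bound on a $\Gamma$‑perturbed copy of $AA^T$. Recalling $Q_2 = \eta(\Gamma AA^T + AA^T\Gamma) + \tfrac{2\eta c}{\rho}(I-\Gamma) - \tau c I$ and that $\tau = \tfrac{\eta\kappa_1}{2c}$, so $\tau c = \tfrac{\eta\kappa_1}{2}$, the inequality $Q_2\succeq\tfrac{\eta\kappa_1}{2}I$ is equivalent to $M\succeq\kappa_1 I$ where $M := \Gamma AA^T + AA^T\Gamma + \tfrac{2c}{\rho}(I-\Gamma)$. First I would split the index set $\{1,\dots,m\}$ into the active block $\alpha=\{1,\dots,m_1\}$ and the inactive block $\beta=\{m_1+1,\dots,m\}$, writing $\Gamma=\mathrm{diag}(\Gamma_\alpha,\Gamma_\beta)$ and partitioning $AA^T$ conformally with diagonal blocks $\Lambda_1=A_1A_1^T$, $\Lambda_2=A_2A_2^T$ and off‑diagonal block $\Lambda_3=A_1A_2^T$. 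By Assumption~\ref{assump:rank:A} we have $\Lambda_1\succeq\kappa_1 I$ and $\|\Lambda_1\|,\|\Lambda_2\|,\|\Lambda_3\|\le\kappa_2$ (the last two since $\Lambda_2,\Lambda_3$ are a principal block and an off‑diagonal block of $AA^T\preceq\kappa_2 I$), while Corollary~\ref{cor:rank:gammamax} gives that the diagonal entries of $\Gamma_\alpha$ lie in $[0,1]$ and those of $\Gamma_\beta$ lie in $[0,\bar\gamma]$.

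Next I would write $M-\kappa_1 I$ in this $2\times 2$ block form, with diagonal blocks $S_\alpha=\Gamma_\alpha\Lambda_1+\Lambda_1\Gamma_\alpha+\tfrac{2c}{\rho}(I-\Gamma_\alpha)-\kappa_1 I$ and $S_\beta=\Gamma_\beta\Lambda_2+\Lambda_2\Gamma_\beta+\tfrac{2c}{\rho}(I-\Gamma_\beta)-\kappa_1 I$, and off‑diagonal block $R=\Gamma_\alpha\Lambda_3+\Lambda_3\Gamma_\beta$, and bound the three pieces separately. For $S_\alpha$: the vertex‑reduction argument in the proof of Lemma~\ref{lem:ineq:gamma} applies verbatim with the PSD matrix $\Lambda_1$ in place of $AA^T$ — that proof never uses full row rank of $A$, only that $c/\rho$ dominates the spectral norm of the perturbed matrix, and principal submatrices of $\Lambda_1$ inherit $\Lambda_1\succeq\kappa_1 I$; since $c\ge\rho\kappa_2\ge\rho\|\Lambda_1\|$, this yields $\Gamma_\alpha\Lambda_1+\Lambda_1\Gamma_\alpha+\tfrac{2c}{\rho}(I-\Gamma_\alpha)\succeq\tfrac32\Lambda_1\succeq\tfrac32\kappa_1 I$, hence $S_\alpha\succeq\tfrac12\kappa_1 I$. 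For $S_\beta$, I would use only crude bounds: $\Gamma_\beta\Lambda_2+\Lambda_2\Gamma_\beta\succeq-2\|\Gamma_\beta\|\|\Lambda_2\|I\succeq-2\kappa_2 I$ and $\tfrac{2c}{\rho}(I-\Gamma_\beta)\succeq\tfrac{2c}{\rho}(1-\bar\gamma)I$, giving $S_\beta\succeq\big[\tfrac{2c}{\rho}(1-\bar\gamma)-2\kappa_2-\kappa_1\big]I$. For $R$, $\|R\|\le(1+\bar\gamma)\|\Lambda_3\|\le 2\kappa_2$.

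Finally I would close with a Schur‑complement step: since $S_\alpha\succeq\tfrac12\kappa_1 I\succ0$, the block matrix $M-\kappa_1 I$ is positive semidefinite iff $S_\beta\succeq R^\top S_\alpha^{-1}R$, and $R^\top S_\alpha^{-1}R\preceq\frac{\|R\|^2}{\lambda_{\min}(S_\alpha)}I\preceq\frac{(2\kappa_2)^2}{\kappa_1/2}I=\frac{8\kappa_2^2}{\kappa_1}I$. Thus $M\succeq\kappa_1 I$ as soon as $\tfrac{2c}{\rho}(1-\bar\gamma)-2\kappa_2-\kappa_1\ge\frac{8\kappa_2^2}{\kappa_1}$; multiplying through by $\tfrac{\eta^2\kappa_1}{2}$ shows this is exactly the hypothesis $\tfrac12\eta\kappa_1\big[\tfrac{2\eta c}{\rho}(1-\bar\gamma)-2\eta\kappa_2-\eta\kappa_1\big]\ge(2\eta\kappa_2)^2$, completing the proof. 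The only point requiring care is the reuse of Lemma~\ref{lem:ineq:gamma}'s proof on the principal block $\Lambda_1$ instead of the full (here not full‑rank) $AA^T$; everything else is routine norm bookkeeping, so I would make that reduction explicit and leave the remaining estimates terse.
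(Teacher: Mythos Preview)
Your proposal is correct and follows essentially the same route as the paper: both partition into the active/inactive blocks, use the vertex-reduction argument (Lemma~\ref{lem:ineq:gamma}) on the $A_1A_1^T$ block to get $\succeq\tfrac{3}{2}\kappa_1 I$, use the crude bound $I-\Gamma_\beta\succeq(1-\bar\gamma)I$ on the inactive block, and finish with the same Schur-complement estimate (the paper's condition ``$\tfrac12\eta\kappa_1[\cdots]\ge(2\eta\kappa_2)^2$'' is exactly your final inequality after restoring the factor of $\eta^2$). One small remark: in your justification for reusing Lemma~\ref{lem:ineq:gamma}'s proof, what is actually needed is that the principal submatrices of $\Lambda_1$ inherit the \emph{upper} bound $\preceq\kappa_2 I$ (so that $c/\rho$ dominates them), not the lower bound $\succeq\kappa_1 I$; the lower bound on $\Lambda_1$ itself is only used at the very end to pass from $\tfrac{3}{2}\Lambda_1$ to $\tfrac{3}{2}\kappa_1 I$.
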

	\begin{proof}
		Notice that $A = [A_1^T, A_2^T]^T$, where $A_1$ consists of the first $m_1$ rows of $A$ while $A_2$ is the $m_1+1$ through the last row of $A$. Then, 
		\begin{align}
		AA^T = \left[\begin{array}{cc}
			A_1 A_1^T & A_1 A_2^T\\
			A_2 A_1^T & A_2 A_2^T
		\end{array}\right]
		\end{align}
		We also divide $\Gamma$ into $\Gamma_1=\text{diag}(\gamma_1,\ldots,\gamma_{m_1}) $ and $\Gamma_2=\text{diag}(\gamma_{m_1+1},\ldots,\gamma_m)$. Then,
		\begin{align}
		Q_2 = \left[ \begin{array}{cc}
		\eta (\Gamma_1 A_1 A_1^T + A_1 A_1^T\Gamma_1  ) + \frac{2\eta c}{\rho} (I - \Gamma_1 ) & \eta (\Gamma_1 A_1 A_2^T + A_1 A_2^T \Gamma_2) \\
		\eta (\Gamma_2 A_2 A_1^T + A_2 A_1^T \Gamma_1 ) & \eta(\Gamma_2 A_2 A_2^T + A_2A_2^T\Gamma_2) + \frac{2\eta c}{\rho}(I - \Gamma_2)
 		\end{array}\right] - \tau c I \label{eq:rank:q2}
		\end{align} 
	We will then lower bound $\eta (\Gamma_1 A_1 A_1^T + A_1 A_1^T\Gamma_1  ) + \frac{2\eta c}{\rho} (I - \Gamma_1 )$ and $\eta(\Gamma_2 A_2 A_2^T + A_2A_2^T\Gamma_2) + \frac{2\eta c}{\rho}(I - \Gamma_2)$. Bounding the latter is easy, since by Corollary~\ref{cor:rank:gammamax}, we have $I - \Gamma_2 \succeq (1 - \bar{\gamma} )I $, and hence
	\begin{align}
	\eta(\Gamma_2 A_2 A_2^T + A_2A_2^T\Gamma_2) + \frac{2\eta c}{\rho}(I - \Gamma_2) \succeq [\frac{2\eta c}{\rho} (1 - \bar{\gamma})  - 2\eta \kappa_2 ] I \label{eq:rank:q2_lb1}
	\end{align}
	Next, we show the following lower bound
	\begin{align}
\eta (\Gamma_1 A_1 A_1^T + A_1 A_1^T\Gamma_1  ) + \frac{2\eta c}{\rho} (I - \Gamma_1 )\succeq \frac{3}{2} \eta A_1 A_1^T \succeq \frac{3}{2} \eta \kappa_1 I.	\label{eq:rank:q2_lb2}
	\end{align}


	Noticing that $\Gamma_1 = \text{diag}(\gamma_1,\ldots,\gamma_{m_1})$ and each $\gamma_j \in [0,1]$, we have $\eta (\Gamma_1 A_1 A_1^T + A_1 A_1^T\Gamma_1  ) + \frac{2\eta c}{\rho} (I - \Gamma_1 )$ is a convex combination of matrices $\{ \eta (\Gamma_1 A_1 A_1^T + A_1 A_1^T\Gamma_1  ) + \frac{2\eta c}{\rho} (I - \Gamma_1 ): \gamma_j = 0 \text{ or } 1 \}$. Therefore, without loss of generality, to prove \eqref{eq:rank:q2_lb2} we only need to prove 
	\begin{align}
	\eta (\Gamma_1^k A_1 A_1^T + A_1 A_1^T\Gamma_1^k  ) + \frac{2\eta c}{\rho} (I - \Gamma_1^k ) \succeq \frac{3}{2} \eta A_1 A_1^T, \quad \forall k=0,\ldots,m_1 \label{eq:rank:q2_lb3}
	\end{align}
	where $\Gamma_1^k$ is defined as
	$$\Gamma_1^k = \text{diag}(\overbrace{1,\ldots,1,}^{k \text{ entries}}0,\ldots,0) $$
Now, we write $A_1 A_1^T$ in block diagonal form
\begin{align}
A_1 A_1^T = \left[\begin{array}{cc}
\Lambda_1 & \Lambda_3\\
\Lambda_3^T & \Lambda_2
\end{array}  \right] \nonumber
\end{align}
where $\Lambda_1$ is $k$-by-$k$, $\Lambda_2$ is $(m_1-k)$-by-$(m_1-k)$, and $\Lambda_3$ is $k$-by-$(m_1 - k)$. Then,
\begin{align}
 \eta (\Gamma_1^k A_1 A_1^T + A_1 A_1^T\Gamma_1^k  ) + \frac{2\eta c}{\rho} (I - \Gamma_1^k ) &=  \left[\begin{array}{cc}
2\eta\Lambda_1 & \eta\Lambda_3\\
\eta\Lambda_3^T &  \frac{2\eta c}{\rho} I
 \end{array}\right] \succeq  \left[\begin{array}{cc}
 2\eta\Lambda_1 & \eta\Lambda_3\\
 \eta\Lambda_3^T &  2\eta \Lambda_2
 \end{array}\right] \\
 &\succeq  \frac{3}{2} \left[\begin{array}{cc}
 \eta\Lambda_1 & \eta\Lambda_3\\
 \eta\Lambda_3^T &  \eta \Lambda_2
 \end{array}\right]  = \frac{3}{2} \eta A_1 A_1^T
\end{align}

Hence \eqref{eq:rank:q2_lb3} is proven, and therefore lower bound \eqref{eq:rank:q2_lb2} is true.

With lower bound \eqref{eq:rank:q2_lb1} and \eqref{eq:rank:q2_lb2} we can now proceed to lower bounding $Q_2$ \eqref{eq:rank:q2},
\begin{align}
Q_2 &\succeq \left[ \begin{array}{cc}
\frac{3}{2}\eta \kappa_1 I & \eta (\Gamma_1 A_1 A_2^T + A_1 A_2^T \Gamma_2) \\
\eta (\Gamma_2 A_2 A_1^T + A_2 A_1^T \Gamma_1 ) & [\frac{2\eta c}{\rho} (1 - \bar{\gamma})  - 2\eta \kappa_2 ] I  
\end{array}\right] - \frac{\eta \kappa_1 }{2}  I \nonumber \\
&\succeq  \eta \kappa_1 I - \frac{\eta \kappa_1 }{2}  I = \frac{\eta \kappa_1 }{2}  I .
\end{align}  
where in the last inequality, we have required $c$ to be large enough ($\frac{1}{2} \eta \kappa_1[\frac{2\eta c}{\rho} (1 - \bar{\gamma})  - 2\eta \kappa_2 -\eta \kappa_1] \geq (2\eta \kappa_2)^2$ ) s.t. 
$$ \left[ \begin{array}{cc}
\frac{3}{2}\eta \kappa_1 I & \eta (\Gamma_1 A_1 A_2^T + A_1 A_2^T \Gamma_2) \\
\eta (\Gamma_2 A_2 A_1^T + A_2 A_1^T \Gamma_1 ) & [\frac{2\eta c}{\rho} (1 - \bar{\gamma})  - 2\eta \kappa_2 ] I  
\end{array}\right]  \succeq \eta \kappa_1 I $$
\end{proof}
With the lower bound on $Q_2$, the rest is to employ Schur Complement to show $\tilde{Q}\succeq 0$. We need to prove
\begin{align}
Q_2 &\succ 0 \label{eq:rank:schur1} \\
Q_1 - Q_3 Q_2^{-1} Q_3^T & \succeq 0\label{eq:rank:schur2}
\end{align}
Eq. \eqref{eq:rank:schur1} is already shown by Lemma~\ref{lem:rank:Q2}. To show \eqref{eq:rank:schur2}, we have
\begin{align}
Q_1 \succeq 2\eta c \mu I - \eta^2 \kappa_2 I - \frac{\eta^2\kappa_1}{2} I
\end{align}
and
\begin{align*}
Q_3 Q_2^{-1} Q_3^T &\preceq \frac{2}{\eta \kappa_1} Q_3 Q_3^T \preceq \frac{2}{\eta \kappa_1} \Vert Q_3\Vert^2 I \\
&\preceq \frac{2}{\eta \kappa_1} ( \eta\ell \sqrt{\kappa_2}  + \eta \rho (\kappa_2)^{3/2} + \frac{\eta^2}{\rho} \sqrt{\kappa_2}+ \frac{\eta^2\kappa_1}{2c} \sqrt{\kappa_2})^2 I\\
&=  \frac{2\kappa_2}{\eta \kappa_1} ( \eta\ell   + \eta \rho (\kappa_2) + \frac{\eta^2}{\rho} + \frac{\eta^2\kappa_1}{2c} )^2 I
\end{align*}
So as long as $c$ is large enough s.t. 
$$2\eta c \mu - \eta^2 \kappa_2  - \frac{\eta^2\kappa_1}{2}  \geq \frac{2\kappa_2}{\eta \kappa_1} ( \eta\ell   + \eta \rho (\kappa_2) + \frac{\eta^2}{\rho} + \frac{\eta^2\kappa_1}{2c} )^2  $$
we have \eqref{eq:rank:schur2} is true, and hence $\tilde{Q}\succeq 0$.

}{}

\end{document}